\documentclass[a4paper,11pt]{amsart} 
\usepackage[T1]{fontenc}
\usepackage[utf8]{inputenc}
\usepackage{amsmath,amsthm, amssymb}
\usepackage{mathtools}
\usepackage{enumitem}
\usepackage{gensymb}
\usepackage[scr=boondoxo,scrscaled=1.05]{mathalfa}
\usepackage{lmodern}
\usepackage{float}
\usepackage{esint}
\usepackage[usenames,dvipsnames]{pstricks}
\usepackage{enumitem} 
\usepackage{chngcntr}
\usepackage{dsfont}
\usepackage[margin=2.5cm]{geometry}
\usepackage{color}
\usepackage{graphicx}
\usepackage[titletoc,toc,title]{appendix}
\usepackage[pdfusetitle=true,bookmarksdepth=3]{hyperref}
\usepackage{amsrefs}
\usepackage{tikz}
\usepackage{todonotes}
\usepackage{soul}
\usepackage{accents}
\usepackage{bbm}

\colorlet{jvs}{cyan!20}
\colorlet{bvv}{red!20}
\usepackage{IEEEtrantools}

\newtheorem{theorem}{Theorem}[section]
\newtheorem{lemma}[theorem]{Lemma}
\newtheorem{prop}[theorem]{Proposition}

\theoremstyle{definition}
\newtheorem{defn}[theorem]{Definition}
\newtheorem{cor}[theorem]{Corollary}
\newtheorem{rem}[theorem]{Remark}

\DeclareMathOperator*\tr{tr}

\DeclareMathOperator*\dist{dist}

\DeclareMathOperator*\loc{loc}
\DeclareMathOperator*\norm{\mathcal{N}}
\DeclareMathOperator{\sign}{sign}

\DeclarePairedDelimiterX{\intvc}[2]{[}{]}{#1,#2}
\DeclarePairedDelimiterX{\intvl}[2]{(}{]}{#1,#2}
\DeclarePairedDelimiterX{\intvr}[2]{[}{)}{#1,#2}
\DeclarePairedDelimiterX{\intvo}[2]{(}{)}{#1,#2}

\DeclarePairedDelimiterX\set[1]\{\}{%

#1
}

\makeatletter
\newcommand{\widerof}[3][c]{\mathpalette\widerof@{{#1}{#2}{#3}}}
\newcommand{\widerof@}[2]{\widerof@@{#1}#2}
\newcommand{\widerof@@}[4]{%
    \begingroup
    \sbox\z@{$\m@th#1#3$}%
    \sbox\tw@{$\m@th#1#4$}%
    \makebox[\ifdim\wd\z@>\wd\tw@ \wd\z@\else \wd\tw@\fi][#2]{$\m@th#1#3$}%
    \endgroup
}
\newcommand*\diff{\mathop{}\, d}

\makeatother

\numberwithin{equation}{section}
\numberwithin{figure}{section}

\author{Bohdan Bulanyi and Berardo Ruffini}

\keywords{Anisotropic mean curvature flow; approximation schemes; front propagation; level set approach; viscosity solution; normal velocity; Wulff shape.}

\subjclass[2020]{35B40, 35D40, 35G25, 35K10, 45L05, 53E10}

\date{\today}

\setcounter{tocdepth}{4}

\begin{document}

\address{
Universit\`{a} di Bologna, Dipartimento di Matematica, Piazza di Porta San Donato 5, 40126 Bologna, Italy}

\email{bohdan.bulanyi@unibo.it}

\address{
Universit\`{a} di Bologna, Dipartimento di Matematica, Piazza di Porta San Donato 5, 40126 Bologna, Italy}

\email{berardo.ruffini@unibo.it}

\address{
}

\email{}

    \title[Approximation schemes for anisotropic mean curvature flows] 
    {Threshold dynamics approximation schemes for anisotropic mean curvature flows with a forcing term}
   
      \begin{abstract}
 We establish the convergence of threshold dynamics-type approximation schemes to propagating fronts evolving according to an anisotropic mean curvature motion in the presence of a forcing term depending on both time and position, thus generalizing the consistency result obtained in \cite{Ishii-Pires-Souganidis} by extending the results obtained in \cite{Caffarelli-Souganidis} for $\alpha \in [1,2)$ to anisotropic kernels and in the presence of a driving force. The limit geometric evolution is of a variational type and can be approximated, at a large scale, by eikonal-type equations modeling dislocations dynamics. We prove that it preserves convexity under suitable convexity assumptions on the forcing term and that convex evolutions of compact sets are unique. If the initial set is bounded and sufficiently large, and the driving force is constant, then the corresponding generalized front propagation is asymptotically similar to the Wulff shape.
 \end{abstract}
  
  \thanks{B. Bulanyi and B. Ruffini were partially supported by the PRIN project 2022R537CS \emph{$NO^3$ - Nodal Optimization, NOnlinear elliptic equations, NOnlocal geometric problems, with a focus on regularity. The authors would like to warmly thank the referee for the careful reading of the paper and for the comments and suggestions which helped them to improve the paper.}}
  
   \maketitle

    \tableofcontents

\section{Introduction}
\subsection{General discussion}
We study the convergence of a class of threshold dynamics-type approximation schemes to hypersurfaces moving with normal velocity equal to the sum of a multiple of an anisotropic mean curvature and a multiple of a forcing term depending on both time and position. In order to describe the general scheme considered in this paper  and to present the results obtained, we begin with the following short story. In 1992, Bence, Merriman and Osher introduced a scheme to compute mean curvature motion by iterating the heat equation \cite{Bence-Merriman-Osher}. The proofs of the Bence, Merriman, and Osher algorithm were provided by Evans \cite{Evans_1993} and Barles and Georgelin \cite{Barles-Georgelin}. Another proof was given by Ishii for a more general isotropic symmetric kernel \cite{Ishii1995generalization}. This was generalized by Ishii, Pires, and Souganidis to the case of anisotropic schemes with kernels having finite second moment \cite{Ishii-Pires-Souganidis}. In \cite{Slepcev_2003}, Slep\v{c}ev proved the convergence of a class of nonlocal threshold dynamics. It is also worth noting that  Da~Lio, Forcadel and Monneau proved that the solution of the nonlocal Hamilton-Jacobi equation modeling dislocations dynamics converges, at a large scale, to the solution of the anisotropic mean curvature motion \cite{DaLio_Forcadel_Monneau}. The Bence, Merriman, and Osher scheme with kernels associated with the fractional heat equation with the fractional Laplacian of order $\alpha \in (0,2)$ (namely, the heat equations, where the usual Laplacian is replaced with the fractional one of order $\alpha \in (0,2)$) was considered by Caffarelli and Souganidis \cite{Caffarelli-Souganidis}. It was proved in \cite{Caffarelli-Souganidis} that for $\alpha \in (0,1)$ the resulting interface moves with normal velocity, which is nonlocal of ``fractional-type'', while for $\alpha \in [1,2)$ the resulting interface moves by weighted mean curvature. The consistency result of \cite{Caffarelli-Souganidis}  was extended in the nonlocal cases (i.e., when $\alpha \in (0,1)$) to the anisotropic case with the presence of an external driving force depending only on time by Chambolle, Novaga, and Ruffini \cite{Chambolle_Novaga_Ruffini_2017}. In the present paper, we extend and improve the algorithm considered in \cite{Chambolle_Novaga_Ruffini_2017} for the cases where $\alpha \in [1,2)$. Namely, we generalize the consistency result obtained in \cite{Ishii-Pires-Souganidis} by extending the results obtained in \cite{Caffarelli-Souganidis}  for $\alpha \in [1,2)$ to anisotropic kernels and
in the presence of a driving force depending on both time and position. Observe that the kernels $P_{\alpha} \in C(\mathbb{R}^{N})$ considered in our paper (see \eqref{Kforalpha12}) do not satisfy the assumption (3.4) on the measurable kernel $f:\mathbb{R}^{N}\to \mathbb{R}$ from \cite{Ishii-Pires-Souganidis}, used in the proofs of Lemmas~3.1 and 3.2 from \cite{Ishii-Pires-Souganidis} for the existence of  a decreasing function $\omega \in C([0,+\infty), [0,+\infty))$ such that $\omega(R)\to 0$ as $R\to +\infty$ and $\int_{\mathbb{R}^{N}\setminus B_{R(\varrho)}(0)}f(x)\diff x \leq \omega(R(\varrho))\varrho$, where $R(\varrho)\to +\infty$ and $\sqrt{\varrho} R(\varrho)\to 0$ as $\varrho \to 0+$. However, a careful analysis shows that if $\alpha \in (1,2)$, then the proofs of Lemmas 3.1 and 3.2 from \cite{Ishii-Pires-Souganidis} can be adapted to our kernels $P_{\alpha}$, and the constant external force can be replaced by a globally bounded external force depending on the position. Indeed, replacing the kernel $f$ by $P_{\alpha}$ in the proofs of Lemmas 3.1 and 3.2 from \cite{Ishii-Pires-Souganidis} and choosing $R(\varrho)=\varrho^{-\theta}$ for some $\theta \in (1/\alpha, 1)$, as we did in the proof of Proposition~\ref{propconsistency1} (see \eqref{estIIintegral}), and taking into account that $\int_{\mathbb{R}^{N}\setminus B_{R(\varrho)}(0)}P_{\alpha}(x)\diff x$ behaves like $\int_{\mathbb{R}^{N}\setminus B_{R(\varrho)}(0)}P_{\alpha}(x)\diff x \sim C\varrho^{\theta \alpha}$ as $\varrho\to 0+$, where $\theta\alpha>1$, one can circumvent (3.4) in \cite{Ishii-Pires-Souganidis} and replace the term $\omega(R(\varrho))\varrho$ by $C\varrho^{\theta\alpha}$. On the other hand, the assumptions (3.2), (3.3), (3.7) from \cite{Ishii-Pires-Souganidis} are strongly used in the proofs of Lemmas 3.1 and 3.2 from \cite{Ishii-Pires-Souganidis}, and our kernels $P_{\alpha}$ do not satisfy them in the case when $\alpha=1$.  Let us put it more precisely.
\subsection{Mathematical setting of the problem}
Given $\alpha \in [1,2)$, $N\geq 2$ and a norm $\norm$ on $\mathbb{R}^{N}$, for each $x \in \mathbb{R}^{N}$ and $t \in (0,+\infty)$, we define 
\begin{equation}\label{Kforalpha12} 
P_{\alpha}(x)=\frac{1}{1+\norm(x)^{N+\alpha}}\,\ \,\ \textup{and} \,\ \,\ p_{\alpha}(x, t)=t^{-\frac{N}{\alpha}}P_{\alpha}(x t^{-\frac{1}{\alpha}}).
\end{equation}
It is worth noting that locally uniformly in $\mathbb{R}^{N}\setminus \{0\}$ and hence in $L^{1}_{\loc}(\mathbb{R}^{N}\setminus \{0\})$, 
\begin{equation*}
\lim_{t\to 0+}t^{-1}p_{\alpha}(\cdot, t)=\norm(\cdot)^{-(N+\alpha)}.
\end{equation*}
 We denote by $h>0$ the size of the time step and choose $\sigma_{\alpha}(h)$ as follows
\begin{equation}\label{eq_definingthethresholding}
\begin{cases}
\sigma_{\alpha}(h)= h^{\frac{\alpha}{2}}   \,\ &\text{if}\,\  \alpha \in (1,2),\\
h=\sigma^{2}_{\alpha}(h)|\ln(\sigma_{\alpha}(h))| \,\  &\text{if}\,\  \alpha=1.
\end{cases}
\end{equation}
Let $\Omega_{0}$ be an open subset of $\mathbb{R}^{N}$ with boundary $\Gamma_{0}=\partial \Omega_{0}$ and $g \in C(\mathbb{R}^{N}\times [0,+\infty))$. For each $n \in \mathbb{N}\backslash \{0\}$, we define the functions $u_{h}(\cdot, nh): \mathbb{R}^{N} \to \{-1, 1\}$ by induction. In particular,
\[
u_{h}(\cdot, (n+1)h)= \sign(J_{h}*u_{h}(\cdot, nh)+g(\cdot, nh) \beta(\alpha, h))  \,\ \text{in} \,\ \mathbb{R}^{N},
\]
where $u_{h}(\cdot, 0)=\mathbbm{1}_{\Omega_{0}}-\mathbbm{1}_{\smash{\overline{\Omega}}_{0}^{c}}$, $\sign(t)=1$ if $t>0$ and $\sign(t)=-1$ if $t\leq 0$, $\mathbbm{1}_{A}$ denotes the characteristic function of $A\subset \mathbb{R}^{N}$,
\begin{equation}\label{defofkernelJh}
J_{h}(x)=p_{\alpha}(x, \sigma_{\alpha}(h)),
\end{equation}
(see \eqref{Kforalpha12} and \eqref{eq_definingthethresholding}) and 
\begin{equation}\label{eq_betathresholdalphatime}
\beta(\alpha, h)=
\begin{cases}
\sigma_{\alpha}(h)^{\frac{1}{\alpha}}=h^{\frac{1}{2}}\,\ &\text{if} \,\ \alpha \in (1,2),\\
\sigma_{\alpha}(h)|\ln(\sigma_{\alpha}(h))|\,\  &\text{if}\,\ \alpha=1.
\end{cases}
\end{equation}
This algorithm generates functions $u_{h}(\cdot, nh)$ and open sets $\Omega^{h}_{nh}$ defined by
\[
u_{h}(\cdot, nh)=\mathbbm{1}_{\Omega^{h}_{nh}}-\mathbbm{1}_{(\Omega^{h}_{nh})^{c}} \,\ \text{in} \,\ \mathbb{R}^{N}
\]
and 
\begin{equation}\label{eq_nsetdiscreteh}
\Omega^{h}_{nh}=\{x \in \mathbb{R}^{N}: J_{h}*u_{h}(\cdot, (n-1)h)(x)>-g(x, (n-1)h)\beta(\alpha, h)\}.
\end{equation}
We shall prove that, when $h\to 0+$, the discrete evolution $\Gamma_{0}\to \Gamma^{h}_{nh}=\partial \Omega^{h}_{nh}$ converges, in a suitable sense, to the motion $\Gamma_{0}\to \Gamma_{t}$ with normal velocity equal to the sum of a multiple of the anisotropic mean curvature (depending on $\norm$) and a multiple of the external force $g$.  
\subsection{Main results}
The anisotropic mean curvature motion in the presence of the external force $g \in C(\mathbb{R}^{N}\times [0,+\infty))$ that we shall obtain in the limit corresponds to the level set pde
\begin{equation}\label{weighted MC equation}
\partial_{t} u = \mu_{\alpha}(Du) (F_{\alpha} (D^{2} u, Du)+g|Du|)  \,\ \text{in} \,\ \mathbb{R}^{N}\times (0,+\infty),\\
\end{equation}
supplemented with the initial condition 
\begin{equation*}
u(\cdot, 0)=u_{0}(\cdot)\,\ \text{in} \,\ \mathbb{R}^{N}
\end{equation*}
for some uniformly continuous function $u_{0}:\mathbb{R}^{N}\to \mathbb{R}$, where for each $p \in \mathbb{R}^{N}\setminus\{0\}$ and for each $N \times N$ symmetric real matrix $M$,
\begin{equation}\label{eq_defofmobilitygeneralapproachintro}
\mu_{\alpha}\left(\frac{p}{|p|}\right)=\left(2\int_{\{x \in \mathbb{R}^{N}: \langle x, p\rangle=0\}} P_{\alpha}(x)\diff \mathcal{H}^{N-1}(x)\right)^{-1} 
\end{equation}
(see \eqref{Kforalpha12}) and 
\begin{equation}\label{eq_defofanisotropicmeancurvoppdevmodalpha}
F_{\alpha}(M,p)=\tr\left(M\mathcal{A}\left(\frac{p}{|p|}\right)\right)
\end{equation}
with
\begin{equation}\label{eq_defofanisotropyformc}
\mathcal{A}\left(\frac{p}{|p|}\right)=C_{N, \alpha}\int_{\mathbb{S}^{N-1}\cap \{x \in \mathbb{R}^{N}: \langle x, p\rangle=0\}}\theta \otimes \theta \frac{\diff \mathcal{H}^{N-2}(\theta)}{\norm(\theta)^{N+1}},
\end{equation}
where
\begin{equation}\label{eq_constcominganalphan}
C_{N,\alpha}=
\begin{cases}
\displaystyle \int_{0}^{+\infty}t^{N}(1+t^{N+\alpha})^{-1}\diff t \,\ & \text{if} \,\ \alpha \in (1,2),\\
1 \,\ &\text{if}\,\ \alpha=1.
\end{cases}
\end{equation}
\begin{rem}\label{rem geometric flow}
If $X$ and $Y$ are symmetric real $N\times N$ matrices such that $X\leq Y$ and $p \in \mathbb{R}^{N}\setminus \{0\}$, then $-F_{\alpha}(X, p)\geq -F_{\alpha}(Y,p)$, and hence $-\mu_{\alpha}(p)(F_{\alpha}(M,p)+g|p|)$ is degenerate elliptic. Also, $-\mu_{\alpha}(p)(F_{\alpha}(M,p)+g|p|)$ is geometric (the reader may consult \cite{Barles-Soner-Souganidis, Ishii-Pires-Souganidis, Chen-Giga-Goto, Ishii-Souganidis} for more details on the geometric equations), because $M\to F_{\alpha}(M,p)$ is linear and 
\[
 F_{\alpha}(M,p)=F_{\alpha}\left(\left(\mathrm{Id} - \frac{p}{|p|}\otimes \frac{p}{|p|}\right)M, \frac{p}{|p|}\right),
 \]
 which comes from \eqref{eq_defofanisotropicmeancurvoppdevmodalpha}, \eqref{eq_defofanisotropyformc} and the fact that $(\theta \otimes \theta)(p\otimes p)= \langle \theta, p \rangle \theta\otimes p$. 
\end{rem}
\begin{rem}
In the particular case where $\norm$ is the usual Euclidean norm, we obtain
\[
\mathcal{A}\left(\frac{p}{|p|}\right)=\frac{C_{N, \alpha}\mathcal{H}^{N-2}(\mathbb{S}^{N-2})}{N-1}\mathrm{Id}_{\{\langle x, p\rangle=0\}}
\]
and hence
\[
F_{\alpha}(M, p)=\frac{C_{N, \alpha}\mathcal{H}^{N-2}(\mathbb{S}^{N-2})}{N-1} \tr\left(\left(\mathrm{Id} - \frac{p}{|p|}\otimes \frac{p}{|p|}\right)M\right).
\]
We recover the classical mean curvature motion up to the factor $C_{N, \alpha}\mathcal{H}^{N-2}(\mathbb{S}^{N-2})\mu_{\alpha}/(N-1)$, where 
\[\mu_{\alpha}=\left(2\int_{\mathbb{R}^{N-1}} \frac{\diff \mathcal{H}^{N-1}(x)}{1+|x|^{N+\alpha}}\right)^{-1} .
\]
\end{rem}
Using the theory of viscosity solutions of Crandall, Ishii, and Lions \cite{Crandall-Ishii-Lions-1992}, one can give the precise meaning of a solution of the equation \eqref{weighted MC equation} (see Definition~\ref{def ofsubsolwmc} and Theorem~\ref{thm equivdefvs}). We point out that the equation \eqref{weighted MC equation}, supplemented with the initial condition $u(\cdot, 0)=u_{0}(\cdot)$ in $\mathbb{R}^{N}$ for some uniformly continuous function $u_{0}$, admits a unique viscosity solution (see, for instance, \cite{Barles-Soner-Souganidis, Chen-Giga-Goto, Evans-Spruck, Ishii-Souganidis}).

Next, we recall that, given a bounded sequence $(u_{h}(\cdot, nh))_{n\in \mathbb{N}}$ of bounded functions, the ``half-relaxed'' limits $\liminf_{*}u_{h}$ and $\limsup^{*}u_{h}$ are defined by
\begin{equation}\label{eq_lowuppsemlimits}
\begin{cases}
 \liminf_{*}u_{h}(x,t):=\displaystyle \liminf_{\substack{y\to x \\ nh\to t}}u_{h}(y, nh),\\
 \limsup^{*}u_{h}(x,t):=\displaystyle \limsup_{\substack{y\to x \\ nh \to t}}u_{h}(y, nh).
\end{cases}
\end{equation}
Then $\liminf_{*}u_{h}\leq \limsup^{*}u_{h}$. Furthermore, if $\widetilde{u}=\liminf_{*}u_{h}= \limsup^{*}u_{h}$, then $u_{h}\to\widetilde{u}$ locally uniformly as $h\to 0+$. 

Our main theorem is the following consistency result. 
\begin{theorem}\label{mainthm}
Let $u_{0}:\mathbb{R}^{N}\to \mathbb{R}$ be  uniformly continuous, $\Omega_{0}=\{x\in \mathbb{R}^{N}: u_{0}(x)>0\}$, $\Gamma_{0}=\{x\in \mathbb{R}^{N}: u_{0}(x)=0\}$, $\Omega_{t}=\{x\in \mathbb{R}^{N}: u(x, t)>0\}$ and $\Gamma_{t}=\{x \in \mathbb{R}^{N}: u(x, t)=0\}$, where $u$ is the unique viscosity solution of \eqref{weighted MC equation} satisfying the initial condition $u(\cdot, 0)=u_{0}(\cdot)$ in $\mathbb{R}^{N}$. Then 
\begin{equation*}
 \mathrm{lim\,inf}_{*}\, u_{h}=1 \,\ \text{in}\,\ \Omega_{t} \,\ \,\ \text{and}\,\ \,\,\  \mathrm{lim\,sup}^{*}\,u_{h}=-1 \,\ \text{in} \,\ (\Omega_{t}\cup \Gamma_{t})^{c}.
\end{equation*}
\end{theorem}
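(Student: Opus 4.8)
The plan is to run the classical argument for threshold-type schemes, going back to Evans \cite{Evans_1993} and Barles--Georgelin \cite{Barles-Georgelin} and adapted to anisotropic kernels by Ishii, Pires and Souganidis \cite{Ishii-Pires-Souganidis} (see also \cite{Caffarelli-Souganidis,Chambolle_Novaga_Ruffini_2017}): combine the monotonicity and stability of the scheme with the local consistency estimates of the previous sections, and close the argument using the comparison principle for \eqref{weighted MC equation}. Write $\underline{u}:=\liminf_{*}u_{h}$ and $\overline{u}:=\limsup^{*}u_{h}$; since $u_{h}\in\{-1,1\}$ we have $-1\le\underline{u}\le\overline{u}\le 1$, the sets $\{\underline{u}=1\}$ and $\{\overline{u}=-1\}$ are open, and what has to be shown is $\Omega_{t}=\{u>0\}\subseteq\{\underline{u}=1\}$ and $(\Omega_{t}\cup\Gamma_{t})^{c}=\{u<0\}\subseteq\{\overline{u}=-1\}$. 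The two inclusions are symmetric: replacing $(u_{0},g)$ by $(-u_{0},-g)$ turns the scheme into the one producing $-u_{h}$ (up to a harmless modification of the datum on $\operatorname{int}\{u_{0}=0\}$, removed by a routine monotone approximation of $u_{0}$), and since $\mu_{\alpha}$ is even and $F_{\alpha}(-M,-p)=-F_{\alpha}(M,p)$ it turns $u$ into $-u$ by uniqueness. I would therefore prove the first inclusion, the second following either from this symmetry or, verbatim, with super-solution barriers in place of sub-solution ones.

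Three ingredients enter. (a) \emph{Monotonicity}: $J_{h}\ge 0$ and $t\mapsto\sign(t)$ is nondecreasing, so $v\le w$ pointwise implies $\sign(J_{h}*v+g\,\beta(\alpha,h))\le\sign(J_{h}*w+g\,\beta(\alpha,h))$; hence the $n$-step map of the scheme is order preserving. (b) \emph{Stability and tail decay}: $\|u_{h}\|_{\infty}=1$, and by the $\alpha$-scaling \eqref{Kforalpha12}, \eqref{defofkernelJh} together with $P_{\alpha}(x)\lesssim\norm(x)^{-N-\alpha}$ one has $\int_{\{|y|\ge\rho\}}J_{h}(y)\diff y\le C\rho^{-\alpha}\sigma_{\alpha}(h)\to 0$ for every fixed $\rho>0$; consequently, at any point lying at distance $\ge\rho$ from the interface of a $\{-1,1\}$-configuration $\mathbbm{1}_{A}-\mathbbm{1}_{A^{c}}$, the quantity $J_{h}*(\mathbbm{1}_{A}-\mathbbm{1}_{A^{c}})(x)$ has the same sign as $\mathbbm{1}_{A}(x)-\mathbbm{1}_{A^{c}}(x)$ and is bounded away from $0$ once $h$ is small enough (depending on $\rho$). (c) \emph{Consistency}: the computations of the previous sections show that $J_{h}$, under the scaling \eqref{eq_definingthethresholding}--\eqref{eq_betathresholdalphatime}, reproduces to leading order in $\beta(\alpha,h)$ the operator $\mu_{\alpha}(p)^{-1}F_{\alpha}(M,p)$ together with the forcing $g|p|$, with the anisotropy \eqref{eq_defofanisotropyformc}, the mobility \eqref{eq_defofmobilitygeneralapproachintro} and the constant \eqref{eq_constcominganalphan}, and the logarithmic factor at $\alpha=1$ absorbed by the choices of $\sigma_{\alpha}(h)$ and $\beta(\alpha,h)$. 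I will use this in the following barrier form: if $\phi$ is smooth, $\{\phi(\cdot,t)>0\}$ is bounded uniformly for $t$ in a compact interval $I$, $D\phi\ne 0$ near the space--time front $\bigcup_{t\in I}\bigl(\partial\{\phi(\cdot,t)>0\}\times\{t\}\bigr)$, and $\phi$ is a \emph{strict} subsolution of \eqref{weighted MC equation} there, then there is $h_{0}>0$ such that for all $h<h_{0}$ the implication $u_{h}(\cdot,nh)\ge\sign\phi(\cdot,nh)$ on $\mathbb{R}^{N}$ $\Longrightarrow$ $u_{h}(\cdot,(n+1)h)\ge\sign\phi(\cdot,(n+1)h)$ on $\mathbb{R}^{N}$ holds whenever $nh,(n+1)h\in I$. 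Near the front this is the consistency estimate of (c) applied to the curved half-space $\{\phi(\cdot,nh)\ge\phi(x,nh)\}$ (the strictness margin dominates the $o(\beta(\alpha,h))$ error once $h$ is small); away from it, where $\phi$ has a fixed sign, it follows from (b) --- and the two regions overlap in a fixed tube.

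With this in hand, fix $(x_{0},t_{0})$ with $u(x_{0},t_{0})>0$. Using that $u$ is the continuous viscosity solution and that \eqref{weighted MC equation} has a comparison principle, I would build a smooth $\phi$ as in (c) on $\mathbb{R}^{N}\times[0,t_{0}]$ with $\phi(\cdot,0)<u_{0}$ on $\mathbb{R}^{N}$ and $\phi(x_{0},t_{0})>0$. Then $\{\phi(\cdot,0)>0\}\subseteq\Omega_{0}$, whence $u_{h}(\cdot,0)\ge\sign\phi(\cdot,0)$ (on $\Omega_{0}$ the right-hand side is $\le 1$, on $\overline{\Omega}_{0}^{\,c}$ it equals $-1$, and on $\partial\Omega_{0}$ it equals $-1\le 0$); by (a), (c) and induction on $n$, $u_{h}(\cdot,nh)\ge\sign\phi(\cdot,nh)$ for all $n$ with $nh\le t_{0}$ and all $h<h_{0}$, and since $\phi(x_{0},t_{0})>0$ and $\phi$ is continuous this forces $u_{h}\equiv 1$ on a neighbourhood of $(x_{0},t_{0})$ for $h$ small, i.e. $\underline{u}(x_{0},t_{0})=1$. (For $t_{0}=0$ one uses the same argument on an arbitrarily short interval.) To construct $\phi$: choose $\varepsilon>0$ with $u(x_{0},t_{0})>\varepsilon$; the sublevel flow $t\mapsto\{u(\cdot,t)\ge\varepsilon\}$ starts inside $\Omega_{0}$ and has $x_{0}$ in its interior at time $t_{0}$, and it can be approximated from inside by a smoothly evolving family $(K_{t})_{t\in[0,t_{0}]}$ of compact sets moving as a strict subsolution of the geometric flow, with $K_{0}\subset\Omega_{0}$ and $x_{0}\in\operatorname{int}K_{t_{0}}$; interior smooth sub-flows of this kind are standard for geometric flows satisfying comparison \cite{Barles-Soner-Souganidis,Chen-Giga-Goto,Ishii-Souganidis}, and, alternatively and more explicitly here, $(K_{t})$ can be assembled from finitely many evolving Wulff shapes --- whose radii solve an explicit ordinary differential equation under \eqref{weighted MC equation} --- chained so as to join a small Wulff shape inside $\Omega_{0}$ at time $0$ to one around $x_{0}$ at time $t_{0}$, using that $u(x_{0},t_{0})>0$ places $x_{0}$ in the open evolution of $\Omega_{0}$ (compare the Wulff-shape asymptotics established elsewhere in the paper). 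One then takes $\phi(\cdot,t)$ to be a mollification of the signed distance to $\partial K_{t}$, cut off far from $\partial K_{t}$ so that $\{\phi(\cdot,t)>0\}$ stays bounded. Equivalently, (a)--(c) may be repackaged by running the scheme at every level $c$ (with datum $\sign(u_{0}-c)$) and reconstructing $U_{h}(x,nh):=\sup\{c:u^{(c)}_{h}(x,nh)=1\}$: then $U_{h}$ is a monotone stable scheme whose half-relaxed limits are, by (c), a viscosity sub- and super-solution of \eqref{weighted MC equation} agreeing with $u_{0}$ at $t=0$, so comparison gives $U_{h}\to u$ locally uniformly, and the theorem follows since the level $c=0$ of $U_{h}$ is the original $u_{h}$.

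I expect the main obstacle to be, first, the consistency estimate (c): one must show that the heavy-tailed anisotropic kernels $J_{h}$ in the regime $\alpha\in[1,2)$ produce \emph{exactly} $\mu_{\alpha}^{-1}F_{\alpha}+g|\cdot|$, uniformly in a neighbourhood of the front and against a curved half-space, with the $\alpha=1$ logarithm correctly balanced by \eqref{eq_definingthethresholding}--\eqref{eq_betathresholdalphatime} --- this is the technical heart carried out in the earlier sections, and it is here that the contrast between $\alpha\in(1,2)$ (where $\beta(\alpha,h)=\sigma_{\alpha}(h)^{1/\alpha}=h^{1/2}$) and $\alpha=1$ genuinely matters. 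Second, the barrier construction must cope with the degeneracy of $\mu_{\alpha}$ and $F_{\alpha}$ at $D\phi=0$ and with possible singularities of the geometric flow, and the local estimate (c) has to be globalised over the $O(t_{0}/h)$ induction steps without accumulation of error; this is exactly where the \emph{strictness} of the barrier (a fixed margin on the compact interval $[0,t_{0}]$) and the monotonicity of the scheme (a discrete comparison principle, which precludes error build-up) are indispensable, and where the slow decay of $J_{h}$ forces the split in (c) between the front tube and the region away from it.
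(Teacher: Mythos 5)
Your route is genuinely different from the paper's. The paper runs the Barles--Souganidis half-relaxed-limits machinery: Proposition~\ref{prop visc subandsupersol} shows that $\limsup^{*}u_{h}$ and $\liminf_{*}u_{h}$ are a viscosity sub- and supersolution of \eqref{weighted MC equation} (its proof reduces everything, at a sequence of discrete maximum points of $u_{h}^{*}-\varphi$, to the single inequality \eqref{eq_finalbeforeproofthisimpliesmcf}, which is then handled by the pointwise consistency Propositions~\ref{propconsistency1} and~\ref{propconsistency2}); the theorem then follows in a few lines from the fact, quoted from Barles--Soner--Souganidis, that $\sign^{*}(u)$ and $\sign_{*}(u)$ are the maximal upper semicontinuous subsolution and the minimal lower semicontinuous supersolution with initial data $\mathbbm{1}_{\overline{\Omega}_{0}}-\mathbbm{1}_{\overline{\Omega}_{0}^{c}}$ and $\mathbbm{1}_{\Omega_{0}}-\mathbbm{1}_{\Omega_{0}^{c}}$. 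You instead propose a discrete comparison argument with explicit smooth strict sub-flow barriers propagated through the scheme. That is a legitimate classical alternative (and it lets you sidestep the degenerate case $D\varphi=0$, which the paper must treat via Theorem~\ref{thm equivdefvs} and Cases 2.1--2.3 of the consistency propositions), but it trades the paper's two soft ingredients for two hard ones, and both are left as assertions.

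First, your ``barrier form'' of consistency in (c) is strictly stronger than what the paper proves. Propositions~\ref{propconsistency1} and~\ref{propconsistency2} are pointwise statements along one sequence of maximum points, with errors shown to be $o(\beta(\alpha,h))$ only in the limit; your induction over $O(t_{0}/h)$ time steps needs that $o(\beta(\alpha,h))$ to be \emph{uniform} over the whole moving front tube and over $n$, with the logarithmic factors at $\alpha=1$ controlled uniformly as well. Nothing in your sketch (or in the paper) supplies this uniformity, and obtaining it is not a routine upgrade of the pointwise estimates. Second, the construction of $\phi$ is the crux and is only gestured at: for a general uniformly continuous $u_{0}$ the level sets $\{u(\cdot,t)\ge\varepsilon\}$ need not evolve smoothly, and producing a smooth, compactly supported, \emph{strict} subsolution with nonvanishing gradient near its front that joins a subset of $\Omega_{0}$ at time $0$ to a neighbourhood of an arbitrary $x_{0}\in\Omega_{t_{0}}$ is essentially equivalent to the extremality statement the paper imports from Barles--Soner--Souganidis. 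Your fallback of chaining evolving Wulff shapes does not obviously work here either, since $g$ depends on both $x$ and $t$ (the Wulff shape evolves self-similarly only for constant forcing, cf.\ Theorem~\ref{theorem ltasymptotic behavior}). Your closing ``repackaging'' via $U_{h}(x,nh)=\sup\{c: u_{h}^{(c)}(x,nh)=1\}$ is closer in spirit to the paper, but as stated it again relies on the unproved locally uniform consistency and on an initial-data matching that the paper's extremal-solution argument is precisely designed to avoid.
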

In particular, since $u_{h}:\mathbb{R}^{N}\to \{-1,1\}$, Theorem~\ref{mainthm} asserts that $u_{h}\to 1$ locally uniformly in $\Omega_{t}$ and $u_{h}\to -1$ locally uniformly in $(\Omega_{t} \cup \Gamma_{t})^{c}$ as $h\to 0+$. Namely, the scheme characterizes the evolution of the front $\Gamma_{0}\to \Gamma_{t}$ by assigning the values $1$ inside the region $\Omega_{t}$ and $-1$ outside the region $(\Omega_{t}\cup \Gamma_{t})^{c}$. Whether the regions where $u_{h}$ converges to $1$ and $-1$ are exactly the regions inside and outside the front, respectively, depends on whether the fattening phenomenon occurs or not (i.e., whether the front develops regions of positive measure where $u=0$; see \cite{Barles-Soner-Souganidis}). The answer is affirmative if and only if no fattening occurs. 
\begin{cor}
Let $u$ and $\Gamma_{t}$ be as in Theorem~\ref{mainthm}. Assume that
\[
\bigcup_{t\geq 0}\Gamma_{t}\times \{t\}=\partial \{(x,t): u(x,t)>0\}=\partial \{(x,t): u(x,t)<0\}.
\]
Then 
\[
\bigcup_{n \in \mathbb{N}}\Gamma^{h}_{nh}\times \{nh\} \to \bigcup_{t \geq 0}\Gamma_{t} \times \{t\}, \,\ \text{as}\,\ h \to 0+,
\]
in the Hausdorff distance. 
\end{cor}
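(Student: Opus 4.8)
The plan is to obtain the Hausdorff convergence from the two one‑sided inclusions that characterise it; since $\Gamma:=\bigcup_{t\ge 0}\Gamma_{t}\times\{t\}$ and $\Gamma^{h}:=\bigcup_{n}\Gamma^{h}_{nh}\times\{nh\}$ need not be bounded, the statement is understood on each compact $Q\subset\mathbb{R}^{N}\times[0,+\infty)$ (equivalently on each time‑slab, if the fronts stay bounded). Write $\mathcal{O}^{\pm}=\{(x,t):\pm u(x,t)>0\}$, which are open; since $u$ realises all three signs, $\mathbb{R}^{N}\times[0,+\infty)=\mathcal{O}^{+}\sqcup\{u=0\}\sqcup\mathcal{O}^{-}$, with $\{u=0\}=\Gamma$ and $\mathcal{O}^{-}=\bigcup_{t}(\Omega_{t}\cup\Gamma_{t})^{c}\times\{t\}$, so the standing hypothesis reads precisely $\{u=0\}=\partial\mathcal{O}^{+}=\partial\mathcal{O}^{-}$. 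By Theorem~\ref{mainthm}, and because $u_{h}$ takes only the values $\pm 1$, we have $\liminf_{*}u_{h}=1$ on $\mathcal{O}^{+}$ and $\limsup^{*}u_{h}=-1$ on $\mathcal{O}^{-}$; unwinding \eqref{eq_lowuppsemlimits} this gives the \emph{neighbourhood statement}: for every $(y_{0},t_{0})\in\mathcal{O}^{+}$ there is $r>0$ with $u_{h}(y,nh)=1$ whenever $0<h<r$ and $|y-y_{0}|+|nh-t_{0}|<r$, and symmetrically $u_{h}\equiv -1$ near every point of $\mathcal{O}^{-}$.

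\emph{Fronts stay near $\Gamma$.} First I would show that for every compact $Q$ and every $\varepsilon>0$, for $h$ small one has $\Gamma^{h}\cap Q\subset\{\operatorname{dist}(\cdot,\Gamma)<\varepsilon\}$. If not, after extracting a subsequence there are $h_{k}\to 0$ and $(x_{k},n_{k}h_{k})\in\Gamma^{h_{k}}_{n_{k}h_{k}}$ converging to some $(x_{*},t_{*})\in Q$ with $\operatorname{dist}((x_{*},t_{*}),\Gamma)\ge\varepsilon$; then $u(x_{*},t_{*})\ne 0$, so $(x_{*},t_{*})$ lies in $\mathcal{O}^{+}$ or in $\mathcal{O}^{-}$. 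In the first case the neighbourhood statement furnishes a fixed spatial ball $B_{\rho}(x_{*})\subset\Omega^{h_{k}}_{n_{k}h_{k}}$ for $k$ large; in the second it furnishes $B_{\rho}(x_{*})\subset\operatorname{int}\big((\Omega^{h_{k}}_{n_{k}h_{k}})^{c}\big)$. Either way $x_{k}\in B_{\rho}(x_{*})$ belongs to $\Omega^{h_{k}}_{n_{k}h_{k}}$ or to the interior of its complement, contradicting $x_{k}\in\partial\Omega^{h_{k}}_{n_{k}h_{k}}=\Gamma^{h_{k}}_{n_{k}h_{k}}$. This direction uses only that the discrete fronts are topological boundaries of the open sets $\Omega^{h}_{nh}$; the no‑fattening hypothesis plays no role here.

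\emph{Every point of $\Gamma$ is approximated by fronts.} This is the heart of the matter. It suffices to show: for every $p=(x_{0},t_{0})\in\Gamma$ with $t_{0}>0$ and every $\varepsilon\in(0,t_{0})$, for $h$ small there is $q\in\Gamma^{h}$ with $|q-p|<\varepsilon$ (uniformity over $p$ in a compact set then follows by the usual subsequence argument, and the case $t_{0}=0$ is handled directly using $\Omega^{h}_{0}=\Omega_{0}$). Since $p\in\partial\mathcal{O}^{+}\cap\partial\mathcal{O}^{-}$, pick $(a,s)\in\mathcal{O}^{+}$ and $(b,\tau)\in\mathcal{O}^{-}$ with $|a-x_{0}|,|b-x_{0}|<\varepsilon/2$ and $|s-t_{0}|,|\tau-t_{0}|<\varepsilon/2$, and let $n_{a}(h),n_{b}(h)$ be the integers nearest to $s/h,\tau/h$; by the neighbourhood statement, $u_{h}(a,n_{a}(h)h)=1$ and $u_{h}(b,n_{b}(h)h)=-1$ for $h$ small. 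Set $B=B_{\varepsilon/2}(x_{0})$ and consider the finitely many indices $n$ with $nh\in[t_{0}-\varepsilon/2,t_{0}+\varepsilon/2]$ (which include $n_{a}(h),n_{b}(h)$ for $h$ small). If for one such $n$ the connected set $B$ meets $\partial\Omega^{h}_{nh}$, then $B\times\{nh\}$ contains a point of $\Gamma^{h}$ within $\varepsilon$ of $p$ and we are done. Otherwise, for each such $n$, $B$ lies entirely in $\Omega^{h}_{nh}$ or entirely in $\operatorname{int}\big((\Omega^{h}_{nh})^{c}\big)$; colour $n$ by $\phi(n)=+1$, resp.\ $-1$. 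Then $\phi(n_{a}(h))=+1$ and $\phi(n_{b}(h))=-1$, so $\phi$ jumps between two consecutive admissible indices $n_{0},n_{0}+1$. But such a jump is impossible for $h$ small: if, say, $\phi(n_{0})=+1$, then $u_{h}(\cdot,n_{0}h)\equiv 1$ on $B$, so, using \eqref{defofkernelJh} and $\int_{\mathbb{R}^{N}}J_{h}=\int_{\mathbb{R}^{N}}P_{\alpha}$,
\[
(J_{h}*u_{h}(\cdot,n_{0}h))(x_{0})\ \ge\ 2\!\int_{B_{\varepsilon/2}(0)}\!J_{h}-\int_{\mathbb{R}^{N}}J_{h}\ \xrightarrow[\,h\to 0+\,]{}\ \int_{\mathbb{R}^{N}}P_{\alpha}\ >\ 0,
\]
since $\sigma_{\alpha}(h)\to 0$ forces $\int_{B_{\varepsilon/2}(0)}J_{h}\to\int_{\mathbb{R}^{N}}P_{\alpha}$ (finite because $\alpha>0$), while $\sup_{nh\in[t_{0}-\varepsilon/2,\,t_{0}+\varepsilon/2]}|g(x_{0},nh)|\,\beta(\alpha,h)\to 0$; hence for $h$ small $(J_{h}*u_{h}(\cdot,n_{0}h))(x_{0})>-g(x_{0},n_{0}h)\beta(\alpha,h)$, i.e.\ $x_{0}\in\Omega^{h}_{(n_{0}+1)h}$, contradicting $\phi(n_{0}+1)=-1$ (the case $\phi(n_{0})=-1$ being symmetric). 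Therefore $B$ must meet $\partial\Omega^{h}_{nh}$ for some admissible $n$, as wanted.

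The main obstacle is exactly this second inclusion. The point is that Theorem~\ref{mainthm} alone is not enough: because time is discretised, one cannot in general place the $\mathcal{O}^{+}$‑ and the $\mathcal{O}^{-}$‑approximation points at one and the same time step — doing so would amount to ruling out ``instantaneous fattening'' of the zero level set, which the hypothesis does not provide — so one must additionally exploit the monotone structure of the scheme, here through the elementary but decisive fact used above that a point lying well inside $\Omega^{h}_{nh}$ remains inside $\Omega^{h}_{(n+1)h}$ once $\sigma_{\alpha}(h)$ and $\beta(\alpha,h)$ are small enough. Granting the two inclusions and letting $\varepsilon\downarrow 0$, we conclude that $\bigcup_{n}\Gamma^{h}_{nh}\times\{nh\}\to\bigcup_{t\ge 0}\Gamma_{t}\times\{t\}$ in the Hausdorff distance (on compact sets) as $h\to 0+$.
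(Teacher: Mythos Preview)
The paper does not supply a proof of this corollary; it is stated without argument immediately after Theorem~\ref{mainthm}, as a direct consequence under the no--fattening assumption (corollaries of this type are standard in the threshold--dynamics literature, cf.\ \cite{Barles-Georgelin,Ishii-Pires-Souganidis}, and are typically left to the reader). So there is no ``paper's own proof'' to compare with; one can only assess your argument on its own merits.

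Your argument is correct. The first inclusion (discrete fronts stay near $\Gamma$) is indeed the routine part and follows straight from Theorem~\ref{mainthm}. Your treatment of the second inclusion is the interesting part: you are right that Theorem~\ref{mainthm} alone does not quite suffice, because the nearby $\mathcal{O}^{+}$-- and $\mathcal{O}^{-}$--witnesses furnished by the hypothesis $\Gamma=\partial\mathcal{O}^{+}=\partial\mathcal{O}^{-}$ need not live at the \emph{same} discrete time, and the scheme could in principle flip the colour of the whole ball $B$ between consecutive steps without ever producing a spatial boundary. Your use of the elementary convolution estimate---since $\int_{B_{\varepsilon/2}}J_{h}\to\int_{\mathbb{R}^{N}}P_{\alpha}$ while $\beta(\alpha,h)\to 0$, a point at distance $\varepsilon/2$ inside $\Omega^{h}_{nh}$ remains in $\Omega^{h}_{(n+1)h}$ for $h$ small---is exactly the right ``one--step continuity'' of the scheme needed to rule out such jumps, and is in the spirit of Proposition~\ref{prop speed of balls}. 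This is a subtlety the literature usually glosses over, and you have handled it cleanly.

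Two small remarks. First, your interpretation ``on compact subsets of $\mathbb{R}^{N}\times[0,+\infty)$'' is the correct one, since the fronts may be unbounded. Second, the limit in the statement should be read as $h\to 0+$ rather than the literal ``$n\to+\infty$''; you have tacitly corrected this.
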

Our strategy of the proof, as in \cite{Caffarelli-Souganidis}, is similar to the one in Barles and Georgelin \cite{Barles-Georgelin}, which relies on the general approach for proving convergence of numerical schemes by viscosity solution methods presented in \cite{Barles-Souganidis}. The novelty compared to previous results is that we consider anisotropic kernels that only have a prescribed power decay and establish the limit evolution in the presence of an external force that depends on both time and position, in contrast to \cite{Chambolle_Novaga_Ruffini_2017}, where only time-dependent forcing terms are considered. In particular, our kernels are not rotation-invariant, in contrast to those considered in \cite{Caffarelli-Souganidis}. They do not satisfy the assumptions (3.2), (3.3), (3.7) (if $\alpha=1$) and $(3.4)$ (if $\alpha \in [1,2)$) from \cite{Ishii-Pires-Souganidis}. Besides the consistency, we estimated the speed of the scheme applied to a ball (see Proposition~\ref{prop speed of balls}), illustrating the correctness of the chosen scales. We proved that our scheme is convexity preserving under suitable convexity assumptions on the forcing term (see Corollary~\ref{cor convexpresflowd}). Thus, the limit geometric evolution preserves convexity (see Corollary~\ref{cor convexflowlimitcontinuous}) under appropriate convexity assumptions on the external force $g$. We obtain the estimate (see Proposition~\ref{prop flowdistanceestim}) of the distance between two generalized evolutions with different external forces. Using this estimate, we provide a different proof of the uniqueness of the evolution of a convex bounded set than in \cite{Barles-Soner-Souganidis} (where the proof is based on the use of the comparison principle). In general, the inclusion principle and the uniqueness of evolutions follow from the scheme and the comparison principle (see Remark~\ref{remark about IP}). At a more technical point, under appropriate regularity assumptions, we established several stability results (see Theorems~\ref{thmconvergenceofviscsolstability1},~\ref{thmconvergenceofviscsolstability2} and Remark~\ref{rem fblalphato1plus}). In particular, the anisotropic fractional mean curvature operator defined in \cite{Chambolle_Novaga_Ruffini_2017} for $\alpha \in (0,1)$ multiplied by the factor $(1-\alpha)$ converges, as $\alpha \nearrow 1$, to our anisotropic mean curvature given for $\alpha=1$ (see Proposition~\ref{prop fromfractionaltoocalalpha=1}). Conversely, our anisotropic mean curvature multiplied by $(\alpha -1)$ converges as $\alpha \searrow 1$ to the anisotropic mean curvature that we obtain for the case where $\alpha=1$ (see Remark~\ref{rem ccmc}). In dimension 2, we characterized the norm $\norm$ by the mobility that we obtain in the limit, and vice versa (see Proposition~\ref{prop premise in 2d}). As a consequence, the unit ball of the mobility is as regular as the unit ball of the norm $\norm$, which has at least a Lipschitz regularity, since it is convex. In particular, the mobility can be a crystalline norm. The anisotropic mean curvature motion \eqref{weighted MC equation}, where $g=0$, is of a variational type and can be approximated, at a large scale, by eikonal-type equations modeling dislocations dynamics (see Theorem~\ref{thm variationaltypeorigin}). We also point out that if the
initial set is bounded and large enough, the corresponding front propagation, under appropriate assumptions, is asymptotically similar to the Wulff shape (see Theorem~\ref{theorem ltasymptotic behavior}). 

\section{Preliminaries}
\subsection{Conventions and Notation} \emph{Conventions}: in this paper, we say that a value is positive
if it is strictly greater than zero, and a value is nonnegative if it is greater than or equal to zero.
Euclidean spaces are endowed with the Euclidean inner product $\langle \cdot, \cdot \rangle$. We shall denote by $N$ an integer greater than or equal to $2$. The symbol $\norm$ will denote a norm on $\mathbb{R}^{N}$. A set will be called a domain whenever it is open and connected. The Hausdorff measures, which we shall use, coincide in terms of normalization with the appropriate outer Lebesgue measures. \\
\noindent \emph{Notation:} we denote the set of $N\times N$ symmetric real matrices by $\mathbb{M}^{N\times N}_{\mathrm{sym}}$. 
    We denote by $B_{r}(x)$, $\smash{\overline{B}}_{r}(x)$, and $\partial{B}_{r}(x)$, respectively, the open ball in $\mathbb{R}^{N}$, the closed ball in $\mathbb{R}^{N}$, and its boundary the $(N-1)$-sphere with center $x$ and radius $r$. 
    If the center is at the origin \(0\), we write $B_{r}$, $\overline{B}_{r}$ and $\partial B_{r}$ the corresponding balls and the $(N-1)$-sphere. 
    We shall denote by $\mathbb{S}^{N-1}$ and $\mathbb{S}^{N-2}$ the $(N-1)$-sphere and the $(N-2)$-sphere with center at the origin and radius $1$, respectively. 
    We denote by $\dist(x,A)$ and $\mathcal{H}^{l}(A)$, respectively, the Euclidean distance from $x\in \mathbb{R}^{N}$ to $A\subset \mathbb{R}^{N}$ and the $l$-dimensional Hausdorff measure of $A$. If $U\subset \mathbb{R}^{N}$ is Lebesgue measurable, then for $ p \in [1,+\infty)$, $L^{p}(U)$ will denote the space consisting of all real measurable functions on $U$ that are $p^{\mathrm{th}}$-power integrable on $U$. By $L^{1}_{\loc}(U)$ we denote the space of functions $u$ such that $u\in L^{1}(V)$ for all $V\Subset U$. We shall also write $\omega_{N-1}$ and $\omega_{N-2}$ instead of $\mathcal{H}^{N-1}(\mathbb{S}^{N-1})$ and $\mathcal{H}^{N-2}(\mathbb{S}^{N-2})$, respectively. The space of bounded continuous functions on $[0,+\infty)$ will be denoted by $C_{b}([0,+\infty))$. If $g \in C_{b}([0,+\infty))$, then $\|g\|_{\infty}$ will denote the supremum of $|g|$.  For each $p\in \mathbb{R}^{N}\setminus \{0\}$, $p^{\perp}$ will denote the orthogonal complement of  $\{p\}$, namely, $p^{\perp}=\{x \in \mathbb{R}^{N}: \langle x, p\rangle=0\}$. We use the standard notation for Sobolev spaces.       
\subsection{Definitions} We begin with the definition of a norm.
\begin{defn}\label{def of norm}
A norm on $\mathbb{R}^{N}$ is a function $\mathcal{N}: \mathbb{R}^{N} \to [0, +\infty)$ that satisfies the following properties
\begin{itemize}
\item $\norm(x)=0 \Leftrightarrow x=0$ ($\norm$ is positive definite);
\item $\norm(\lambda x)=|\lambda| \norm(x)$ for each $\lambda \in \mathbb{R}$ and $x \in \mathbb{R}^{N}$ ($\norm$ is positive 1-homogeneous and even);
\item $\norm(x+y)\leq \norm(x)+\norm(y)$ for each $x, y \in \mathbb{R}^{N}$ ($\norm$ is subadditive).
\end{itemize}
\end{defn}
It is well know that for each norm $\norm$ on $\mathbb{R}^{N}$ there exists a constant $C=C(\norm) \geq 1$ such that for each $x \in \mathbb{R}^{N}$,
\begin{equation}\label{equivofnormsoneucl}
C^{-1} |x| \le \norm(x) \le C |x|.
\end{equation}
Next, we recall the definition of a viscosity solution of \eqref{weighted MC equation}. We denote by $[F_{\alpha}]^{*}$ and $[F_{\alpha}]_{*}$ the upper and lower semicontinuous envelopes of $F_{\alpha}$, respectively. 
\begin{defn}\label{def ofsubsolwmc}
A locally bounded upper semicontinuous function $u:\mathbb{R}^{N}\times [0,+\infty)\to \mathbb{R}$ is a viscosity subsolution of \eqref{weighted MC equation} if for every $(x_{0}, t_{0})$ and every test function $\varphi \in C^{2}(\mathbb{R}^{N}\times (0,+\infty))$ such that $u-\varphi$ has a maximum at $(x_{0},t_{0})$, 
\begin{equation}\label{condition subsoldefn}
 \partial_{t} \varphi (x_{0},t_{0})\leq \mu_{\alpha}(D\varphi(x_{0},t_{0})) ([F_{\alpha}]^{*}(D^{2}\varphi(x_{0}, t_{0}), D\varphi(x_{0}, t_{0}))+g(x_{0},t_{0})|D\varphi(x_{0},t_{0})|). 
 \end{equation}
A  locally bounded lower semicontinuous function $u:\mathbb{R}^{N}\times [0,+\infty)\to \mathbb{R}$ is a viscosity supersolution of \eqref{weighted MC equation} if for every $(x_{0}, t_{0})$ and every test function $\varphi \in C^{2}(\mathbb{R}^{N}\times (0,+\infty))$ such that $u-\varphi$ has a minimum at $(x_{0},t_{0})$, 
\begin{equation}\label{condition supersoldefn} 
 \partial_{t} \varphi (x_{0}, t_{0})\geq \mu_{\alpha}(D\varphi(x_{0}, t_{0})) ([F_{\alpha}]_{*}(D^{2}\varphi(x_{0}, t_{0}), D\varphi(x_{0}, t_{0}))+g(x_{0}, t_{0})|D\varphi(x_{0}, t_{0})|).  \end{equation}
 A continuous function $u:\mathbb{R}^{N}\times [0,+\infty)\to \mathbb{R}$ is a viscosity solution of  \eqref{weighted MC equation}  if it is a subsolution and a supersolution of \eqref{weighted MC equation}.
\end{defn} 
For the theory of viscosity solutions, the reader may consult \cite{Crandall-Ishii-Lions-1992}. We shall use an equivalent definition which eliminates the difficulty related to the fact that $|D\varphi|$ may be equal to zero. 
\begin{theorem}\label{thm equivdefvs}
In Definition~\ref{def ofsubsolwmc}, the condition \eqref{condition subsoldefn} can be replaced by
\begin{equation}\label{def ofsubsolwmc new}
 \partial_{t} \varphi (x_{0}, t_{0})\leq \mu_{\alpha}(D\varphi(x_{0}, t_{0})) (F_{\alpha}(D^{2}\varphi(x_{0}, t_{0}), D\varphi(x_{0}, t_{0}))+g(x_{0}, t_{0})|D\varphi(x_{0}, t_{0})|) 
  \end{equation}
if $ |D\varphi(x_{0}, t_{0})|\neq 0$
 or 
 \begin{equation}\label{eq equivthm2sub}
  \partial_{t} \varphi (x_{0}, t_{0})\leq 0 \,\ \text{if}\,\ |D\varphi(x_{0}, t_{0})|= 0 \,\ \text{and}\,\ D^{2}\varphi(x_{0}, t_{0})=0,
\end{equation}
and the condition \eqref{condition supersoldefn} by
\begin{equation}\label{condition supersoldefn new}
 \partial_{t} \varphi (x_{0}, t_{0})\geq \mu_{\alpha}(D\varphi(x_{0}, t_{0})) (F_{\alpha}(D^{2}\varphi(x_{0}, t_{0}), D\varphi(x_{0}, t_{0}))+g(x_{0}, t_{0})|D\varphi(x_{0}, t_{0})|)
 \end{equation} if $|D\varphi(x_{0}, t_{0})|\neq 0$
 or 
 \begin{equation}\label{eq equivthm2sup}
  \partial_{t} \varphi (x_{0}, t_{0})\geq 0 \,\ \text{if}\,\ |D\varphi(x_{0}, t_{0})|= 0 \,\ \text{and}\,\ D^{2}\varphi(x_{0}, t_{0})=0,
\end{equation}
 and the definition remains equivalent. 
\end{theorem}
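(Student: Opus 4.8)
The plan is to first compute the semicontinuous envelopes of $F_\alpha$ and thereby isolate the real content of the statement, and then prove the nontrivial implication by the classical reduction to admissible test functions.

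Since $p\mapsto\mathcal{A}(p/|p|)$ is continuous on $\mathbb{R}^N\setminus\{0\}$ and, by \eqref{equivofnormsoneucl}, the integrand in \eqref{eq_defofanisotropyformc} is bounded, the matrix $\mathcal{A}(p/|p|)$ is bounded uniformly in $p\neq0$; hence $F_\alpha$ is continuous on $\mathbb{M}^{N\times N}_{\mathrm{sym}}\times(\mathbb{R}^N\setminus\{0\})$, so $[F_\alpha]^*=[F_\alpha]_*=F_\alpha$ at every $(M,p)$ with $p\neq0$, while $[F_\alpha]^*(M,0)=\max_{|e|=1}\tr(M\mathcal{A}(e))$ and $[F_\alpha]_*(M,0)=\min_{|e|=1}\tr(M\mathcal{A}(e))$, both vanishing when $M=0$. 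Moreover $\mu_\alpha$ is bounded between two positive constants (again by \eqref{equivofnormsoneucl}) and $g(x_0,t_0)|D\varphi(x_0,t_0)|$ vanishes whenever $D\varphi(x_0,t_0)=0$, so at a point with $D\varphi(x_0,t_0)=0$ the (envelope) reading of the right-hand side of \eqref{condition subsoldefn} is simply $\max_{|e|=1}\mu_\alpha(e)\tr(D^2\varphi(x_0,t_0)\mathcal{A}(e))$. Consequently: when $|D\varphi(x_0,t_0)|\neq0$ the inequality \eqref{condition subsoldefn} coincides with \eqref{def ofsubsolwmc new}; when $|D\varphi(x_0,t_0)|=0$ and $D^2\varphi(x_0,t_0)=0$ it reduces to $\partial_t\varphi(x_0,t_0)\leq0$, i.e.\ to \eqref{eq equivthm2sub}; so the only content of the theorem is that, for a test function $\varphi$ with $u-\varphi$ attaining a maximum at $(x_0,t_0)$ and with $D\varphi(x_0,t_0)=0$, $D^2\varphi(x_0,t_0)\neq0$, the inequality \eqref{condition subsoldefn} for \emph{this} $\varphi$ is already forced by the remaining requirements (and symmetrically for supersolutions). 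The implication ``Definition~\ref{def ofsubsolwmc} $\Rightarrow$ the new definition'' is immediate from these identifications, since dropping a requirement can only enlarge the class of sub/supersolutions.

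For the converse I would follow the reduction to admissible test functions (see \cite{Evans-Spruck, Chen-Giga-Goto, Barles-Georgelin}). Let $u$ satisfy the conditions of the new definition and let $u-\varphi$ attain a maximum at $z_0=(x_0,t_0)$ with $D_x\varphi(z_0)=0$ and $M:=D^2_x\varphi(z_0)\neq0$; one must show $\partial_t\varphi(z_0)\leq\max_{|e|=1}\mu_\alpha(e)\tr(M\mathcal{A}(e))$. Replacing $\varphi$ by $\varphi+\delta|x-x_0|^2+\eta(t-t_0)^2$ with $\delta,\eta>0$ small (and $\delta$ chosen so that $M+2\delta\,\mathrm{Id}$ is invertible), we may assume $u-\varphi$ has a strict maximum over a closed cylinder $\overline{B_r(x_0)}\times[t_0-r,t_0+r]\subset\mathbb{R}^N\times(0,+\infty)$, with $D^2_x\varphi(z_0)$ invertible, at the expense of changing $M$ by $O(\delta)$ and leaving $\partial_t\varphi(z_0)$ unchanged; since $M\mapsto\max_{|e|=1}\mu_\alpha(e)\tr(M\mathcal{A}(e))$ is continuous it suffices to argue with the modified $\varphi$ and then let $\delta,\eta\downarrow0$. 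For $q\in\mathbb{R}^N$ small set $\varphi_q(x,t)=\varphi(x,t)+\langle q,x-x_0\rangle$; by strictness and uniform convergence $u-\varphi_q$ attains its maximum over the cylinder at a point $z_q\to z_0$ which is interior for $q$ small, so $\varphi_q$ is an admissible test function at $z_q$, with $D^2_x\varphi_q(z_q)\to M$, $\partial_t\varphi_q(z_q)\to\partial_t\varphi(z_0)$ and $D_x\varphi_q(z_q)=D_x\varphi(z_q)+q\to0$. Granting (see below) that one may choose $q_n\to0$ with $p_n:=D_x\varphi_{q_n}(z_{q_n})\neq0$, the new condition \eqref{def ofsubsolwmc new} at $z_{q_n}$ reads
\[
\partial_t\varphi(z_{q_n})\leq\mu_\alpha\!\left(\tfrac{p_n}{|p_n|}\right)\left(\tr\!\left(D^2_x\varphi(z_{q_n})\,\mathcal{A}\!\left(\tfrac{p_n}{|p_n|}\right)\right)+g(z_{q_n})\,|p_n|\right);
\]
passing to a subsequence along which $p_n/|p_n|\to e\in\mathbb{S}^{N-1}$ and using $|p_n|\to0$, $z_{q_n}\to z_0$, $D^2_x\varphi(z_{q_n})\to M$ together with the continuity of $\mu_\alpha$, $\mathcal{A}$ and $g$, we obtain $\partial_t\varphi(z_0)\leq\mu_\alpha(e)\tr(M\mathcal{A}(e))\leq\max_{|e'|=1}\mu_\alpha(e')\tr(M\mathcal{A}(e'))$, which is \eqref{condition subsoldefn}. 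The supersolution case is entirely symmetric (minima of $u-\varphi_q$, the lower envelope and $\min_{|e|=1}$).

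The one genuinely technical point is the selection of $q_n\to0$ with $D_x\varphi_{q_n}(z_{q_n})\neq0$: it is needed because for $q$ small $D^2_x\varphi(z_q)\neq0$, so a vanishing perturbed gradient would make the new definition impose nothing at $z_q$. If no such sequence existed, then for all small $q$ every maximizer would satisfy $D_x\varphi(z_q)=-q$, and then — using that $D^2_x\varphi(z_0)$ is invertible, so $z\mapsto D_x\varphi(z)$ is a submersion near $z_0$ with pairwise disjoint level sets, together with the monotonicity inequality $\langle q-q',\,x_q-x_{q'}\rangle\leq0$ obeyed by maximizers — the now-standard argument yields a contradiction; this is precisely the reduction to generalized test functions of \cite{Evans-Spruck, Chen-Giga-Goto}, and alternatively one replaces the linear perturbation $\langle q,x-x_0\rangle$ by a small smooth perturbation engineered so that its gradient at the displaced maximum cannot vanish, as in \cite{Barles-Georgelin}. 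I expect this step to be the main obstacle; everything else is routine bookkeeping with the semicontinuous envelopes of $F_\alpha$ and the continuity and positivity of $\mu_\alpha$, $\mathcal{A}$ and $g$.
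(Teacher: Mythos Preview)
The paper does not give its own argument: its entire proof is the one-line reference ``the reader may consult the proof of \cite[Proposition~2.2]{Barles-Georgelin}, which adapts here without any difficulty.'' Your envelope computation and the identification of the only nontrivial case ($D\varphi(x_0,t_0)=0$, $D^2\varphi(x_0,t_0)\neq0$) are correct and match what one needs to verify, so in that sense you are doing exactly what the citation asks a reader to reconstruct.

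There is, however, a genuine gap in your linear-perturbation step. Take $u\equiv0$ and $\varphi(x,t)=\tfrac12|x-x_0|^2+\tfrac12(t-t_0)^2$ (and keep it after your $\delta,\eta$ modification, which just rescales the quadratic). For every $q$ the maximum of $u-\varphi_q$ is attained at $x_q=x_0-q/(1+2\delta)$, $t_q=t_0$, and there $D_x\varphi_q(z_q)=(1+2\delta)(x_q-x_0)+q=0$. So the perturbed gradient vanishes for \emph{all} $q$; no selection of $q_n$ with $D_x\varphi_{q_n}(z_{q_n})\neq0$ exists. The monotonicity inequality $\langle q-q',x_q-x_{q'}\rangle\le0$ holds in this example (it equals $-|q-q'|^2/(1+2\delta)$) and produces no contradiction, and the invertibility of $D_x^2\varphi(z_0)$ is precisely what makes the cancellation exact. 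Thus the mechanism you propose cannot be completed as stated.

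The approach actually used in \cite{Barles-Georgelin} is not to force $D\varphi\neq0$ by a linear tilt, but to \emph{replace} the spatial part of $\varphi$ by a test function whose first and second spatial derivatives vanish at the (possibly displaced) maximum: one compares $u$ with $\psi_k(x,t)=\varphi(x_0,t)+k\,|x-x_0|^4$, tracks the maximum $z_k\to z_0$, and in the alternative $x_k\neq x_0$ uses that the second fundamental matrix of $|x-x_0|^4$ is a nonnegative multiple of $\mathrm{Id}$ plus a rank-one piece in the direction of $D\psi_k$, which the geometric operator $F_\alpha$ kills; this gives $\partial_t\varphi(x_0,t_0)\le0\le[F_\alpha]^*(M,0)$ when the relevant upper envelope is nonnegative, and a short extra argument handles the remaining sign case. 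Your description of the Barles--Georgelin alternative (``a small smooth perturbation engineered so that its gradient at the displaced maximum cannot vanish'') is therefore not quite what they do; if you want to keep a perturbation approach, replace the linear tilt by a quartic (or by a radial $f(|x-x_0|)$ with $f(0)=f'(0)=f''(0)=0$), which lands you in one of the two admissible cases of the new definition at the displaced maximum.
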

\begin{proof}
The reader may consult  the proof of \cite[Proposition~2.2]{Barles-Georgelin}, which adapts here without any difficulty.
\end{proof}
Next, we recall the definition of the generalized evolution corresponding to \eqref{weighted MC equation}. Let $\mathcal{F}$ and $\mathcal{O}$ denote, respectively, the collection of closed and open subsets of $\mathbb{R}^{N}$. Let $\Omega_{0}$ be an open subset of $\mathbb{R}^{N}$ and let $u_{0}:\mathbb{R}^{N}\to \mathbb{R}$  be a uniformly continuous function. Assume that $\Omega_{0}=\{x \in \mathbb{R}^{N}: u_{0}(x)>0\}$ and $\Gamma_{0}=\{x \in \mathbb{R}^{N}: u_{0}(x)=0\}$. Let $u$ be a unique viscosity solution of the equation \eqref{weighted MC equation} supplemented with the initial condition $u(\cdot, 0)=u_{0}(\cdot)$ in $\mathbb{R}^{N}$. We define $\Omega_{t}=\{x \in \mathbb{R}^{N}: u(x,t)>0\}$ and $\Gamma_{t}=\{x \in \mathbb{R}^{N}: u(x,t)=0\}$. We also define the maps  $X_{t}:\mathcal{F}\to \mathcal{F}$ and $O_{t}:\mathcal{O}\to \mathcal{O}$ by $X_{t}(\Omega_{0}\cup \Gamma_{0})=\Omega_{t}\cup \Gamma_{t}$ and $O_{t}(\Omega_{0})=\Omega_{t}$. 
\begin{defn}\label{def ofgenflow}
The collections $\{X_{t}\}_{t\geq 0}$ and $\{O_{t}\}_{t \geq 0}$ are called the generalized evolutions with normal velocity $v(-D(\frac{Du}{|Du|}), -\frac{Du}{|Du|}, x,t)=\mu_{\alpha}(-\frac{Du}{|Du|})(-\frac{1}{|Du|}F_{\alpha}(D^{2}u, Du)+g(x,t))$, where $\mu_{\alpha}$ is defined in \eqref{eq_defofmobilitygeneralapproachintro} and $F_{\alpha}$ is defined in \eqref{eq_defofanisotropicmeancurvoppdevmodalpha}.
\end{defn}
\section{Convergence of the discrete flows}
\subsection{The speed of balls}
In this subsection, we estimate the speed of the scheme applied to a ball. This provides us with a control on the (bounded) speed at which the balls decrease with the discrete flow.
\begin{prop}\label{prop speed of balls} Let $\alpha \in [1,2)$, $r>0$, $x_{0} \in \mathbb{R}^{N}$ and $g \in C_{b}([0,+\infty))$. There exist constants $A_{1}=A_{1}(\alpha, N, \norm)>0$, $A_{2}=A_{2}(\alpha, N, \norm)>0$ and $h_{0}=h_{0}(\alpha, r, \|g\|_{\infty}, N, \norm)>0$ such that for $h \in (0, h_{0})$ and $\tau=A_{1}/r + A_{2} \|g\|_{\infty}$, the following holds.
If $\alpha=1$, then 
\begin{equation}\label{eqestalpha1njg905j40990g045j0}
J_{h}*(\mathbbm{1}_{B_{r}(x_{0})}-\mathbbm{1}_{B^{c}_{r}(x_{0})})\geq \|g\|_{\infty}\sigma_{1}|\ln(\sigma_{1})|\,\ \text{in} \,\ B_{\smash{r-\tau h}}(x_{0})
\end{equation}
and 
\begin{equation}\label{eqextestc1k54gk045k40}
J_{h}*(\mathbbm{1}_{B_{r}(x_{0})}-\mathbbm{1}_{B^{c}_{r}(x_{0})})<- \|g\|_{\infty}\sigma_{1}|\ln(\sigma_{1})|\,\ \text{in} \,\ B_{\smash{r+\tau h}}(x_{0})\setminus \overline{B}_{r}(x_{0}).
\end{equation}
If $\alpha \in (1,2)$, then 
\begin{equation}\label{eqestalphaoirj095jg495j9g5j}
J_{h}*(\mathbbm{1}_{B_{r}(x_{0})}-\mathbbm{1}_{B^{c}_{r}(x_{0})})\geq \|g\|_{\infty} h^{\frac{1}{2}}\,\ \text{in} \,\ B_{\smash{r-\tau h}}(x_{0})
\end{equation}
and
\begin{equation}\label{eqestextballalpha12}
J_{h}*(\mathbbm{1}_{B_{r}(x_{0})}-\mathbbm{1}_{B^{c}_{r}(x_{0})}) <-\|g\|_{\infty} h^{\frac{1}{2}} \,\ \text{in} \,\ B_{\smash{r+\tau h}}(x_{0})\setminus \overline{B}_{r}(x_{0}).
\end{equation}
\end{prop}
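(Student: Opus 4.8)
The plan is to reduce the statement to a one-dimensional estimate in the radial variable and to carefully track the size of the convolution $J_h * (\mathbbm{1}_{B_r(x_0)} - \mathbbm{1}_{B^c_r(x_0)})$ near the sphere $\partial B_r(x_0)$. By translation invariance of the convolution we may assume $x_0 = 0$. Since $J_h(y) = p_\alpha(y,\sigma_\alpha(h)) = \sigma_\alpha(h)^{-N/\alpha} P_\alpha\bigl(y\,\sigma_\alpha(h)^{-1/\alpha}\bigr)$, a change of variables $y = \sigma_\alpha(h)^{1/\alpha} z$ converts the convolution at a point $x$ with $|x| = \rho$ into an integral of $P_\alpha(z)$ over a region that, after rescaling, is a half-space-like set: writing $\lambda = \sigma_\alpha(h)^{1/\alpha}$ (so $\lambda = h^{1/2}$ when $\alpha \in (1,2)$, and $\lambda = \sigma_1$ when $\alpha = 1$), one gets
\begin{equation*}
J_h * (\mathbbm{1}_{B_r} - \mathbbm{1}_{B^c_r})(x) = \int_{\mathbb{R}^N} P_\alpha(z)\,\bigl(\mathbbm{1}_{B_r}(x - \lambda z) - \mathbbm{1}_{B^c_r}(x - \lambda z)\bigr)\diff z = 1 - 2\int_{\{z:\, |x - \lambda z| > r\}} P_\alpha(z)\diff z.
\end{equation*}
So everything hinges on estimating $\int_{\{|x - \lambda z| > r\}} P_\alpha(z)\diff z$ from above (when $|x| = r - \tau h$, to get the lower bounds \eqref{eqestalpha1njg905j40990g045j0}, \eqref{eqestalphaoirj095jg495j9g5j}) and from below (when $r < |x| < r + \tau h$, to get the upper bounds \eqref{eqextestc1k54gk045k40}, \eqref{eqestextballalpha12}).

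The key geometric observation is that, as $\lambda \to 0$, the set $\{z : |x - \lambda z| > r\}$ is, up to a controlled error, the half-space $\{z : \langle z, \nu\rangle < (|x| - r)/\lambda\}$ where $\nu = x/|x|$; the sphere $\partial B_r$ flattens to its tangent plane and the curvature correction is of order $\lambda/r$. When $|x| = r$ exactly, $\int_{\{|x - \lambda z|>r\}} P_\alpha(z)\diff z = \tfrac12$ to leading order, with the next-order term coming from curvature — this is precisely the mechanism producing the mean-curvature speed, and it is where the scaling choices \eqref{eq_definingthethresholding}, \eqref{eq_betathresholdalphatime} and the constant $C_{N,\alpha}$ in \eqref{eq_constcominganalphan} are calibrated. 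Concretely I would expand: for $|x| = r + s$ with $|s|$ small,
\begin{equation*}
\int_{\{|x - \lambda z| > r\}} P_\alpha(z)\,\diff z = \frac12 - \frac{s}{\lambda}\int_{\nu^\perp} P_\alpha(z)\,\diff \mathcal{H}^{N-1}(z) + \frac{c}{r}\,\Theta(\lambda) + o(\cdots),
\end{equation*}
where the curvature term $\Theta(\lambda)$ is of size $\lambda$ when $\alpha \in (1,2)$ and of size $\lambda|\ln\lambda|$ when $\alpha = 1$ — the logarithm in the $\alpha = 1$ case arising because the tail $P_\alpha(z) \sim \mathcal{N}(z)^{-(N+1)}$ makes the relevant second-moment-type integral $\int_{|z|<1/\lambda} |z|\,P_1(z)\,\diff z$ logarithmically divergent, truncated at scale $1/\lambda$. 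This is exactly why $\beta(1,h) = \sigma_1|\ln \sigma_1|$ and why $h = \sigma_1^2|\ln\sigma_1|$: the curvature contribution $\lambda|\ln\lambda|$ times the step length and the forcing threshold $\|g\|_\infty\beta(1,h)$ must all be of compatible orders, and $\tau h$ is chosen to dominate these corrections. The term $A_1/r$ in $\tau$ absorbs the curvature error, while $A_2\|g\|_\infty$ absorbs the forcing threshold.

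The execution then splits into two parallel sets of estimates, one for $\alpha \in (1,2)$ and one for $\alpha = 1$. First I fix the geometry: for $|x| = \rho$ with $\rho$ close to $r$, parametrize $\{z : |x - \lambda z| > r\}$ near its boundary hyperplane and compare it with the half-space, bounding the symmetric difference by a slab of width $O(\lambda/r)$ intersected with the relevant region, and then integrating $P_\alpha$ over that slab using the power decay of $P_\alpha$. Second, I make these bounds quantitative with explicit absolute constants (depending only on $\alpha, N, \mathcal{N}$ through \eqref{equivofnormsoneucl}), so that for $|x| \le r - \tau h$ the quantity $1 - 2\int_{\{|x-\lambda z|>r\}} P_\alpha \ge 2\tau h\,\mu_\alpha(\nu)^{-1}\cdot(\text{const}) - (\text{curvature error})$ is $\ge \|g\|_\infty \beta(\alpha,h)$ once $h < h_0$, with $A_1$ chosen to beat the curvature error uniformly in the direction $\nu$ (using that $\mu_\alpha$ is bounded above and below on $\mathbb{S}^{N-1}$ by \eqref{eq_defofmobilitygeneralapproachintro} and \eqref{equivofnormsoneucl}) and $A_2$ chosen to beat $\|g\|_\infty\beta(\alpha,h)/h$, which is bounded by construction of $\beta$. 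The symmetric argument on the annulus $B_{r+\tau h}\setminus \overline{B}_r$ gives the strict upper bounds; here one uses that on this annulus $|x| > r$ so the leading linear term has the favorable sign, and strictness comes from the open annulus excluding $|x| = r$.

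\textbf{Main obstacle.} The delicate point is the $\alpha = 1$ case: extracting the $\lambda|\ln\lambda|$ asymptotics with the correct constant requires splitting the integral over $z$ into a near region $\{|z| \lesssim 1\}$, a logarithmic middle region $\{1 \lesssim |z| \lesssim 1/\lambda\}$ where the integrand behaves like $\mathcal{N}(z)^{-(N+1)}$ and produces the logarithm, and a far tail $\{|z| \gtrsim 1/\lambda\}$ which must be shown negligible — and doing this uniformly in the direction $\nu = x/|x|$ and in $\rho$ near $r$, while keeping track of how the anisotropy $\mathcal{N}$ enters. For the present Proposition one does not actually need the sharp constant, only two-sided bounds of the right order with the signs correct near $|x| = r$, which is a significant simplification; but setting up the half-space comparison cleanly so that the curvature error is genuinely $O(\lambda/r)$ (respectively $O(\lambda|\ln\lambda|/r)$) and not larger is the crux, and the choice of $h_0$ depending on $r$ and $\|g\|_\infty$ reflects exactly the threshold at which these error terms are beaten by $\tau h$.
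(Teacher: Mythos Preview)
Your overall strategy is correct and captures the same mechanism as the paper---a linear term in the signed distance $s=|x|-r$ (of order $s/\lambda$, coming from the ``mobility'' integral $\int_{\nu^\perp}P_\alpha$) competing against a curvature correction of order $\lambda/r$ (resp.\ $\lambda|\ln\lambda|/r$ when $\alpha=1$)---but the execution is organised differently. The paper does not expand the half-space comparison directly: instead it translates so that the origin sits on $\partial B$, then separately (i) bounds the \emph{value} $J_h*(\mathbbm{1}_B-\mathbbm{1}_{B^c})(0)$ from below using the symmetry $\int_B=\int_{-B}$ to isolate the ``lens'' between the two tangent balls (this produces the $1/r$ curvature term and the logarithm when $\alpha=1$), (ii) bounds the \emph{radial derivative} $e\cdot DJ_h*(\mathbbm{1}_B-\mathbbm{1}_{B^c})(te)$ from below via a boundary integral over $\partial B$ (this produces the $\eta/\sigma$ or $\eta h^{-1/2}$ lower bound, with $\eta=\tfrac12\int_{e^\perp}P_\alpha$), and (iii) combines them by a first-order Taylor expansion in $t$. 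Your half-space expansion packages (i)--(iii) into a single asymptotic formula; the paper's split has the advantage that the derivative estimate becomes a clean surface integral, while your route avoids the Taylor step and is arguably more geometric.

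Two points to tighten. First, a normalisation slip: $P_\alpha$ is not a probability density, so your identity should read $\|P_\alpha\|_{L^1}-2\int_{\{|x-\lambda z|>r\}}P_\alpha$, and the ``$\tfrac12$'' in your expansion should be $\tfrac12\|P_\alpha\|_{L^1}$; this is cosmetic but worth fixing. Second, your expansion is only valid for $|x|$ in a neighbourhood of $r$, so to get the bound on \emph{all} of $B_{r-\tau h}$ you still need a reason why the minimum is attained near the boundary. The paper handles this by a forward reference to the convexity of the superlevel sets of $J_h*\mathbbm{1}_{B_r}$ (their Corollary~5.2, via $-1/(N+\alpha)$-concavity of $J_h$), which reduces the interior estimate to the sphere $\partial B_{r-\tau h}$; in your framework you could instead note that for $|x|\le r/2$ the convolution is trivially bounded below by a positive constant, and that for $r/2\le|x|\le r-\tau h$ your expansion applies with $s\le-\tau h$.
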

\begin{rem}
In \cite{Chambolle_Novaga_Ruffini_2017}, for each $\alpha \in (0,1)$ and for each set $E\subset \mathbb{R}^{N}$ of class $C^{1,1}$, the authors define the anisotropic fractional mean curvature at $x \in \partial E$ by
\begin{equation}\label{nonlocalcurvature}
-\kappa_{\alpha}(x,E)=\int_{\mathbb{R}^{N}}\frac{\mathbbm{1}_{E}(y)-\mathbbm{1}_{E^{c}}(y)}{\norm(y-x)^{N+\alpha}} \diff y,
\end{equation}
where the role of the  ``$-$'' sign is to ensure that convex sets have nonnegative curvature (for a rigorous explanation of this definition, the reader may consult \cite[Subsection~2.2]{Imbert2009level} and, in particular, \cite[Lemma~1]{Imbert2009level}). We point out that this definition of the anisotropic mean curvature is no longer valid in the case where $\alpha \in [1,2)$, since \cite[Lemma~1]{Imbert2009level} is false in this case (see \cite[Remark~1]{Imbert2009level}), and the \emph{anisotropic} $\alpha$-stable Lévy measure $\frac{\diff z}{\norm(z)^{N+\alpha}}$ on $\mathbb{R}^{N}$ does not satisfy the assumption (A3) in \cite{Imbert2009level}. 
\end{rem}
\begin{proof}
To lighten the notation, denote $e=(1,0, \dotsc, 0) \in\mathbb{R}^{N}$ and $B=B_{r}(re)$. Up to a translation, we assume that $x_{0}=0$. A little later in Corollary~\ref{cor convexpresflowd} we shall prove that the superlevel sets of the function $J_{h}*(\mathbbm{1}_{B_{r}}-\mathbbm{1}_{B^{c}_{r}})$ are convex. Thus, to obtain the desired lower bound for $J_{h}*(\mathbbm{1}_{B_{r}}-\mathbbm{1}_{B^{c}_{r}})$ in the ball $\overline{B}_{r-t}$ for fairly small $t \in (0,1)$, it suffices to obtain the same estimate for $J_{h}*(\mathbbm{1}_{B_{r}}-\mathbbm{1}_{B^{c}_{r}})$ on $\partial B_{r-t}$.  Inasmuch as $\norm$ is even, $\norm(x)=\norm(-x)$  for each $x \in \mathbb{R}^{N}$, and in view of \eqref{equivofnormsoneucl}, to deduce the desired estimate for $J_{h}*(\mathbbm{1}_{B_{r}}-\mathbbm{1}_{B^{c}_{r}})$ on $\partial B_{r-t}$, it is enough to estimate $J_{h}*(\mathbbm{1}_{B}-\mathbbm{1}_{B^{c}})$ at the point $te$. According to \eqref{defofkernelJh},
\begin{equation}\label{formulaforkernelJh}
J_{h}(y)=\frac{\sigma_{\alpha}(h)}{\sigma_{\alpha}(h)^{\frac{N+\alpha}{\alpha}}+\norm(y)^{N+\alpha}}
\end{equation}
for each $y\in \mathbb{R}^{N}$. Since $\norm(y)=\norm(-y)$ for each $y\in \mathbb{R}^{N}$, 
\begin{equation}\label{eq_compsymnorm1}
\int_{B}\frac{\diff y}{\sigma_{\alpha}(h)^{\frac{N+\alpha}{\alpha}}+\norm(y)^{N+\alpha}}=\int_{-B}\frac{\diff y}{\sigma_{\alpha}(h)^{\frac{N+\alpha}{\alpha}}+\norm(y)^{N+\alpha}}.
\end{equation}
Using \eqref{formulaforkernelJh} and \eqref{eq_compsymnorm1}, we have
\begin{equation}\label{eqncinfi2fj0j349fj39huh2}
\begin{split}
\sigma_{\alpha}(h)^{-1}J_{h}*(\mathbbm{1}_{B}-\mathbbm{1}_{B^{c}})(0) & =-\int_{\mathbb{R}^{N}\setminus (-B \cup B)}\frac{\diff y}{\sigma_{\alpha}(h)^{\frac{N+\alpha}{\alpha}}+\norm(y)^{N+\alpha}}\\ & \geq- \int_{B_{\min\{1,r\}}\setminus (-B\cup B)}\frac{\diff y}{\sigma_{\alpha}(h)^{\frac{N+\alpha}{\alpha}}+\norm(y)^{N+\alpha}}-\int_{B^{c}_{\min\{1,r\}}}\frac{\diff y}{\norm(y)^{N+\alpha}} \\
&=- \int_{B_{\min\{1,r\}}\setminus (-B\cup B)}\frac{\diff y}{\sigma_{\alpha}(h)^{\frac{N+\alpha}{\alpha}}+\norm(y)^{N+\alpha}}-\frac{C^{N+\alpha}\omega_{N-1}}{\alpha \min\{1,r\}^{\alpha}},
\end{split}
\end{equation}
where $C=C(\norm)\geq 1$ is the constant coming from \eqref{equivofnormsoneucl}. To lighten the notation, hereinafter in this proof, we shall simply write  $\sigma$ instead of $\sigma_{\alpha}(h)$.  We consider the next cases. \\
\emph{Case 1:} $\alpha=1$. Then, defining $C_{\varrho}=(B_{\min\{1,r\}}\setminus (-B\cup B)) \cap \{x\in \mathbb{R}^{N}: \dist(x, \{t e: t \in \mathbb{R}\})=\varrho\}$, using the coarea formula (see \cite[Theorem~3.2.22~(3)]{Federer}) and \eqref{equivofnormsoneucl}, we obtain the following 
\begin{equation}\label{eqest007new3890h5083}
\begin{split}
\int_{B_{\min\{1,r\}}\setminus (-B\cup B)}\frac{\diff y}{\sigma^{N+1}+\norm(y)^{N+1}} & \leq \int_{0}^{\min\{1,r\}}\diff \varrho \int_{C_{\varrho}}\frac{\diff \mathcal{H}^{N-1}(y)}{\sigma^{N+1}+\norm(y)^{N+1}}\\
& \leq \frac{2}{r}C^{N+1} \omega_{N-2} \int_{0}^{\min\{1,r\}}\frac{\varrho^{N}\diff \varrho}{(C\sigma)^{N+1}+\varrho^{N+1}}\\
& \leq \frac{2C^{N+1}}{r(N+1)} \omega_{N-2} \ln\left(\frac{(C\sigma)^{N+1}+\min\{1,r\}^{N+1}}{(C\sigma)^{N+1}}\right)\\
& \leq \frac{3}{r}C^{N+1} \omega_{N-2} |\ln(\sigma)|
\end{split}
\end{equation}
provided that $\sigma> 0$ is small enough depending on $r$, $N$ and $C$, where $C=C(\norm)\geq 1$ is the constant coming from \eqref{equivofnormsoneucl}. Combining \eqref{eqncinfi2fj0j349fj39huh2} and \eqref{eqest007new3890h5083}, we get
\begin{equation}\label{eqestimation29042ijg05j340}
\sigma^{-1} J_{h}*(\mathbbm{1}_{B}-\mathbbm{1}_{B^{c}})(0) \geq -\frac{3}{r} C^{N+1}\omega_{N-2}|\ln(\sigma)|-\frac{C^{N+1}\omega_{N-1}}{\min\{1,r\}}
\end{equation}
provided that $\sigma> 0$ is small enough depending on $r$, $N$ and $C$. Next, we want to estimate $DJ_{h}*(\mathbbm{1}_{B}-\mathbbm{1}_{B^{c}})$ at the point $te$, where $|t|\ \leq f(\sigma)$ and $f(\sigma)$ is large enough with respect to $h$. Taking into account \eqref{eqestimation29042ijg05j340} and carefully performing preliminary computations, one can conclude that it is enough to consider $f(\sigma)=\sigma^{\theta}|\ln(\sigma)|$ for some $\theta \in (1,2)$. Let us fix $t=s \sigma^{\theta}|\ln(\sigma)|$, where $s \in [-1,1]$. We have 
\begin{equation*}
G:=\langle DJ_{h}*(\mathbbm{1}_{B}-\mathbbm{1}_{B^{c}})(te), \sigma^{-1} e\rangle=-2\int_{\partial B}\frac{\langle \nu, e \rangle \diff \mathcal{H}^{N-1}(x)}{\sigma^{N+1}+\norm(te-x)^{N+1}},
\end{equation*}
where $\nu: \partial B \to \mathbb{S}^{N-1}$ stands for the outward pointing unit normal vector field to $\partial B$. Performing the change of variables $x=\sigma y$ and denoting the ball $B_{\sigma^{-1}r}(\sigma^{-1} re)$ by $\sigma^{-1}B$, we deduce that
\begin{equation*}
\begin{split}
G&=-2 \int_{\partial(\sigma^{-1} B)} \frac{\sigma^{N-1} \langle \nu, e\rangle \diff \mathcal{H}^{N-1}(y)}{\sigma^{N+1}(1+\norm(s \sigma^{\theta-1}|\ln(\sigma)|e-y)^{N+1})} \\
&= -\frac{2}{\sigma^{2}} \int_{\partial(\sigma^{-1} B)} \frac{\langle \nu, e\rangle \diff \mathcal{H}^{N-1}(y)}{1+\norm(s \sigma^{\theta-1}|\ln(\sigma)|e-y)^{N+1}},
\end{split}
\end{equation*}
where $\nu$ is the outward pointing unit normal vector field to $\partial (\sigma^{-1}B)$. Let $R\geq 1$ and $B^{\sigma}$ be the ball with center at the origin and radius $R\sigma^{-\frac{N-1}{N+1}}$. If $y \in (B^{\sigma})^{c}$, then 
\begin{equation}\label{eqtrianglularinequality1}
|s \sigma^{\theta-1}|\ln(\sigma)|e-y| \geq R\sigma^{-\frac{N-1}{N+1}}-\sigma^{\theta-1}|\ln(\sigma)| \geq \frac{R}{2}\sigma^{-\frac{N-1}{N+1}}
\end{equation}
provided that $\sigma>0$ is small enough depending on $\theta$, $R$ and $N$. Thus, using \eqref{equivofnormsoneucl} and \eqref{eqtrianglularinequality1}, we obtain the following estimate 
\begin{equation*}
\begin{split}
\left|2\int_{\partial(\sigma^{-1} B)\setminus B^{\sigma}}  \frac{\langle \nu, e\rangle \diff \mathcal{H}^{N-1}(y)}{1+\norm(s \sigma^{\theta-1}|\ln(\sigma)|e-y)^{N+1}}\right| \leq \frac{2\sigma^{-(N-1)}r^{N-1}\omega_{N-1}}{1+ (\frac{R}{2C} \sigma ^{-\frac{N-1}{N+1}})^{N+1}}  =\frac{2r^{N-1} \omega_{N-1}}{\sigma^{N-1}+(\frac{R}{2C})^{N+1}},
\end{split}
\end{equation*}
which can be made arbitrarily small by choosing $R$ large enough depending on $r$, $N$ and $C$. On the other hand, if $\sigma>0$ is small enough (depending on $r$, $N$ and $C$) and $y \in \partial(\sigma^{-1}B) \cap B^{\sigma}$, then we can assume that $\sigma^{-1}r$ is large enough with respect to $R\sigma^{-\frac{N-1}{N+1}}$ so that $\langle \sigma, e\rangle \leq -1/2$. This implies that 
\begin{equation*}
\begin{split}
-2 \int_{\partial(\sigma^{-1} B)\cap B^{\sigma}}  \frac{\langle \nu, e\rangle  \diff \mathcal{H}^{N-1}(y)}{1+\norm(s \sigma^{\theta-1}|\ln(\sigma)|e-y)^{N+1}}& \geq  \int_{\partial(\sigma^{-1} B)\cap B^{\sigma}} \frac{\diff \mathcal{H}^{N-1}(y)}{1+\norm(s \sigma^{\theta-1}|\ln(\sigma)| e-y)^{N+1}}\\
& \to \int_{e^{\perp}} \frac{\diff \mathcal{H}^{N-1}(y)}{1+\norm(y)^{N+1}}=:2 \eta >0
\end{split}
\end{equation*}
as $h\to 0+$. Altogether, we have proved that if $R$ is large enough, there exists $h_{0}>0$ depending on $r$, $\theta$, $N$, $\norm$ and $C$ such that if $h \in (0,h_{0})$ (recall that $h=\sigma^{2}|\ln(\sigma)|$), then \eqref{eqestimation29042ijg05j340} holds and
\begin{equation}\label{eqmi0ijfi4gji45gi945j0}
\langle D J_{h}*(\mathbbm{1}_{B}-\mathbbm{1}_{B^{c}})(te), e\rangle \geq \frac{\eta}{\sigma}
\end{equation}
whenever $|t| \leq \sigma^{\theta} |\ln(\sigma)|$. Choosing $\theta=3/2$, $A_{1}=4C^{N+1}\omega_{N-2}/\eta$, $A_{2}=1/\eta$,
\[
\tau=\frac{A_{1}}{r}+A_{2}\|g\|_{\infty},\,\ \,\  t=\tau \sigma^{2}|\ln(\sigma)|
\]
and (possibly) reducing $h_{0}$, we have that $|t|\leq \sigma^{\theta} |\ln(\sigma)|$ and 
\begin{equation}\label{estder1stk39kg45igj}
J_{h}*(\mathbbm{1}_{B}-\mathbbm{1}_{B^{c}})(te)=J_{h}*(\mathbbm{1}_{B}-\mathbbm{1}_{B^{c}})(0)+\langle D J_{h}*(\mathbbm{1}_{B}-\mathbbm{1}_{B^{c}})(te), te\rangle+o(t)\geq \|g\|_{\infty} \sigma |\ln(\sigma)|
\end{equation}
if $h\in (0, h_{0})$, where we have used Taylor's expansion for $J_{h}*(\mathbbm{1}_{B}-\mathbbm{1}_{B^{c}})(0)$. Notice that $\tau$ and $h_{0}$ depend on $r$, $\|g\|_{\infty}$, $N$, $\norm$ and $C$. Inasmuch as $C$ depends only on $\norm$, we can assume that $\tau$ and $h_{0}$ depend only on $r$, $\|g\|_{\infty}$, $N$ and $\norm$. This implies \eqref{eqestalpha1njg905j40990g045j0}. On the other hand, taking into account that $J_{h}*(\mathbbm{1}_{B}-\mathbbm{1}_{B^{c}})(0)<0$ (see \eqref{eqncinfi2fj0j349fj39huh2}) and \eqref{eqmi0ijfi4gji45gi945j0}, we have
\[
J_{h}*(\mathbbm{1}_{B}-\mathbbm{1}_{B^{c}})(-te)=J_{h}*(\mathbbm{1}_{B}-\mathbbm{1}_{B^{c}})(0)-\langle D J_{h}*(\mathbbm{1}_{B}-\mathbbm{1}_{B^{c}})(-te),  te\rangle +o(t) < -\|g\|_{\infty}\sigma |\ln(\sigma)|.
\]
Thus, \eqref{eqextestc1k54gk045k40} is satisfied. 
\\
\emph{Case 2:} $\alpha \in (1,2)$. Let $C_{\varrho}$ be defined as in Case 1. We recall that $\sigma=h^{\frac{\alpha}{2}}$ and $C=C(\norm)\geq 1$ is the constant coming from \eqref{equivofnormsoneucl}. Using the coarea formula (see \cite[Theorem~3.2.22~(3)]{Federer}), \eqref{equivofnormsoneucl} and the facts that $(C^{2}h+\varrho^{2})^{\frac{N+\alpha}{2}}\leq 2^{\frac{N+\alpha}{2}-1}((C^{2}h)^{\frac{N+\alpha}{2}}+\varrho^{N+\alpha})$ (by Jensen's inequality), $C\geq 1$ and also $(h^{\frac{1}{2}}+\varrho)^{2}\leq 2(h+\varrho^{2})$ (by Jensen's inequality), we deduce that  
\begin{equation}\label{eqest007new3890h5083c2}
\begin{split}
\int_{B_{\min\{1,r\}}\setminus (-B\cup B)}\frac{\diff y}{\sigma^{\frac{N+\alpha}{\alpha}}+\norm(y)^{N+\alpha}} & \leq \int_{0}^{\min\{1,r\}}\diff \varrho \int_{C_{\varrho}}\frac{\diff \mathcal{H}^{N-1}(y)}{h^{\frac{N+\alpha}{2}}+\norm(y)^{N+\alpha}}\\
& \leq \frac{2}{r}C^{N+\alpha} \omega_{N-2} \int_{0}^{\min\{1,r\}}\frac{\varrho^{N}}{(C^{2}h)^{\frac{N+\alpha}{2}}+\varrho^{N+\alpha}}\diff \varrho\\
& \leq \frac{2^{\frac{N+\alpha}{2}}}{r}C^{N+\alpha} \omega_{N-2} \int^{1}_{0}\frac{\varrho^{N}}{(h+\varrho^{2})^{\frac{N+\alpha}{2}}} \diff \varrho \\
& \leq \frac{2^{\frac{N+\alpha}{2}}}{r}C^{N+\alpha} \omega_{N-2} \int^{1}_{0}(h+\varrho^{2})^{-\frac{\alpha}{2}}\diff \varrho \\
& \leq \frac{2^{\frac{N}{2}+\alpha}}{r}C^{N+\alpha} \omega_{N-2} \int^{1}_{0}(h^{\frac{1}{2}}+\varrho)^{-\alpha}\diff \varrho\\
&=\frac{2^{\frac{N}{2}+\alpha}}{r}C^{N+\alpha} \omega_{N-2}\frac{h^{\frac{1-\alpha}{2}}-(1+h^{\frac{1}{2}})^{1-\alpha}}{\alpha-1}.
\end{split}
\end{equation}
Combining \eqref{eqncinfi2fj0j349fj39huh2} and \eqref{eqest007new3890h5083c2}, we get
\begin{equation}\label{eqestimation29042ijg05j340c2}
\begin{split}
J_{h}*(\mathbbm{1}_{B}-\mathbbm{1}_{B^{c}})(0) & \geq -\frac{2^{\frac{N}{2}+\alpha}}{r}C^{N+\alpha}\omega_{N-2} \frac{h^{\frac{\alpha}{2}}((1+h^{\frac{1}{2}})^{\alpha-1}-h^{\frac{\alpha-1}{2}})}{(\alpha-1)(h^{\frac{1}{2}}+h)^{\alpha-1}}  -\frac{C^{N+\alpha}\omega_{N-1}h^{\frac{\alpha}{2}}}{\alpha \min\{1,r\}^{\alpha}}\\
& \geq -\frac{2^{\frac{N}{2}+\alpha}C^{N+\alpha}\omega_{N-2}}{(\alpha-1)r}h^{\frac{1}{2}}-\frac{C^{N+\alpha}\omega_{N-1}h^{\frac{\alpha}{2}}}{\alpha \min\{1,r\}^{\alpha}},
\end{split}
\end{equation}
where we have used that $(1+h^{\frac{1}{2}})^{\alpha-1}\leq 1+h^{\frac{\alpha-1}{2}}$ (by subadditivity, since $(\alpha-1) \in (0,1)$). It is worth noting that if $\alpha$ is close enough to $1$, then \[\frac{h^{\frac{\alpha}{2}}(h^{\frac{1-\alpha}{2}}-(1+h^{\frac{1}{2}})^{1-\alpha})}{\alpha-1} \approx h^{\frac{\alpha}{2}}|\ln(h^{\frac{1}{2}})|=\sigma|\ln(\sigma^{\frac{1}{\alpha}})| \approx \sigma |\ln(\sigma)|\] (this allows us to compare \eqref{eqestimation29042ijg05j340c2} with \eqref{eqestimation29042ijg05j340}  when $\alpha>1$ is close enough to $1$). Next, we want to estimate $DJ_{h}*(\mathbbm{1}_{B}-\mathbbm{1}_{B^{c}})$ at the point $te$, where $|t|\ \leq f(h)$ and $f(h)$ is large enough with respect to $h$. In view of \eqref{eqestimation29042ijg05j340c2} and our  preliminary computations, we conclude that it is enough to consider $f(h)=h^{\frac{2+\alpha}{4}}$ provided that $h>0$ is small enough. Let us fix $t=s h^{\frac{2+\alpha}{4}}$, where $s \in [-1,1]$. We have 
\begin{equation*}
G:=\langle DJ_{h}*(\mathbbm{1}_{B}-\mathbbm{1}_{B^{c}})(te), \sigma^{-1} e\rangle=-2\int_{\partial B}\frac{\langle \nu, e \rangle \diff \mathcal{H}^{N-1}(x)}{h^{\frac{N+\alpha}{2}}+\norm(te-x)^{N+\alpha}}.
\end{equation*}
Performing the change of variables $x=h^{\frac{1}{2}} y$ and denoting the ball $B_{h^{-\frac{1}{2}}r}(h^{-\frac{1}{2}} re)$ by $h^{-\frac{1}{2}}B$, we deduce that
\begin{equation*}
\begin{split}
G&=-2 \int_{\partial(h^{-\frac{1}{2}} B)} \frac{h^{\frac{N-1}{2}} \langle \nu, e\rangle \diff \mathcal{H}^{N-1}(y)}{h^{\frac{N+\alpha}{2}}(1+\norm(s h^{\frac{\alpha}{4}}e-y)^{N+\alpha})} \\
&= -\frac{2}{h^{\frac{1+\alpha}{2}}} \int_{\partial(h^{-\frac{1}{2}} B)} \frac{\langle \nu, e\rangle \diff \mathcal{H}^{N-1}(y)}{1+\norm(s h^{\frac{\alpha}{4}}e-y)^{N+\alpha}}.
\end{split}
\end{equation*}
Let $R\geq 1$ and $B^{h}$ be the ball with center at the origin and radius $Rh^{-\frac{N-1}{2(N+\alpha)}}$. If $y \in (B^{h})^{c}$, then 
\begin{equation}\label{eqtrianglularinequality1c2}
|s h^{\frac{\alpha}{4}}e-y| \geq Rh^{-\frac{N-1}{2(N+\alpha)}}-h^{\frac{\alpha}{4}} \geq \frac{R}{2}h^{-\frac{N-1}{2(N+\alpha)}}
\end{equation}
provided that $h>0$ is small enough depending on $\alpha$, $R$ and $N$. Thus, using \eqref{equivofnormsoneucl} and \eqref{eqtrianglularinequality1c2}, we obtain the following estimate 
\begin{equation*}
\begin{split}
\left|2\int_{\partial(h^{-\frac{1}{2}} B)\setminus B^{h}}  \frac{\langle \nu, e\rangle \diff \mathcal{H}^{N-1}(y)}{1+\norm(s h^{\frac{\alpha}{4}}e-y)^{N+\alpha}} \right| \leq \frac{2h^{-\frac{N-1}{2}}r^{N-1}\omega_{N-1}}{1+ (\frac{R}{2C} h ^{-\frac{N-1}{2(N+\alpha)}})^{N+\alpha}}=\frac{2r^{N-1} \omega_{N-1}}{h^{\frac{N-1}{2}}+(\frac{R}{2C})^{N+\alpha}},
\end{split}
\end{equation*}
which can be made arbitrarily small by choosing $R$ large enough depending on $\alpha$, $r$, $N$ and $C$. On the other hand, if $h>0$ is small enough (depending on $\alpha$, $r$, $N$ and $C$) and $y \in \partial(h^{-\frac{1}{2}}B) \cap B^{h}$, then we can assume that $h^{-\frac{1}{2}}r$ is large enough with respect to $Rh^{-\frac{N-1}{2(N+\alpha)}}$ so that $\langle \sigma, e\rangle \leq -1/2$. This implies that 
\begin{equation*}
\begin{split}
-2 \int_{\partial(h^{-\frac{1}{2}} B)\cap B^{h}}  \frac{\langle \nu, e\rangle  \diff \mathcal{H}^{N-1}(y) }{1+\norm(s h^{\frac{\alpha}{4}}e-y)^{N+\alpha}}& \geq  \int_{\partial(h^{-\frac{1}{2}} B)\cap B^{h}} \frac{\diff \mathcal{H}^{N-1}(y)}{1+\norm(s h^{\frac{\alpha}{4}}e-y)^{N+\alpha}}\\
& \to \int_{e^{\perp}} \frac{\diff \mathcal{H}^{N-1}(y)}{1+\norm(y)^{N+\alpha}}=:2 \eta >0
\end{split}
\end{equation*}
as $h\to 0+$. Altogether, if $R$ is large enough, there exists $h_{0}=h_{0}(\alpha, r, N, C)>0$ such that if $h \in (0,h_{0})$, then \eqref{eqestimation29042ijg05j340c2} holds and
\begin{equation}\label{eqi5jgi45ig549j90j}
\langle D J_{h}*(\mathbbm{1}_{B}-\mathbbm{1}_{B^{c}})(te), e\rangle \geq \eta h^{-\frac{1}{2}}
\end{equation}
whenever $|t| \leq h^{\frac{2+\alpha}{4}}$. Setting $A_{1}=2^{\frac{N}{2}+\alpha+1}C^{N+\alpha}\omega_{N-2}/((\alpha-1)\eta)$, $A_{2}=1/\eta$,
\[
\tau=\frac{A_{1}}{r}+A_{2}\|g\|_{\infty},\,\ \,\  t=\tau h
\]
and (possibly) reducing $h_{0}$, we have that $|t|\leq h^{\frac{2+\alpha}{4}}$ and 
\begin{equation*}
J_{h}*(\mathbbm{1}_{B}-\mathbbm{1}_{B^{c}})(te)=J_{h}*(\mathbbm{1}_{B}-\mathbbm{1}_{B^{c}})(0)+\langle D J_{h}*(\mathbbm{1}_{B}-\mathbbm{1}_{B^{c}})(te), te\rangle +o(t)\geq \|g\|_{\infty} h^{\frac{1}{2}}
\end{equation*}
if $h \in (0,h_{0})$, where we have used Taylor's expansion of $J_{h}*(\mathbbm{1}_{B}-\mathbbm{1}_{B^{c}})(0)$ at $te$. Let us point out that $\tau$ and $h_{0}$ depend on $\alpha$, $r$, $\|g\|_{\infty}$, $N$, $\norm$ and $C$. Inasmuch as $C$ depends only on $\norm$, we can assume that $\tau$ and $h_{0}$ depend only on $\alpha$, $r$, $\|g\|_{\infty}$, $N$ and $\norm$. We can also assume that $A_{1}$ and $A_{2}$ depend only on $\alpha$, $N$ and $\norm$. This yields \eqref{eqestalphaoirj095jg495j9g5j}. Furthermore, observing that  $J_{h}*(\mathbbm{1}_{B}-\mathbbm{1}_{B^{c}})(0)<0$ (see \eqref{eqncinfi2fj0j349fj39huh2}) and taking into account \eqref{eqi5jgi45ig549j90j}, we have
\[
J_{h}*(\mathbbm{1}_{B}-\mathbbm{1}_{B^{c}})(-te)=J_{h}*(\mathbbm{1}_{B}-\mathbbm{1}_{B^{c}})(0)-\langle D J_{h}*(\mathbbm{1}_{B}-\mathbbm{1}_{B^{c}})(-te), te\rangle + o(t)< -\|g\|_{\infty}h^{\frac{1}{2}},
\]
and hence \eqref{eqestextballalpha12} is satisfied. This completes our proof of Proposition~\ref{prop speed of balls}.
\end{proof}
\begin{cor} Let $\alpha \in [1,2)$, $r>0$, $x_{0} \in \mathbb{R}^{N}$, $g \in C_{b}([0,+\infty))$ and for each $x \in \mathbb{R}^{N}$, $u_{h}(x, 0)=\mathbbm{1}_{B_{r}(x_{0})}(x)-\mathbbm{1}_{B^{c}_{r}(x_{0})}(x)$. Let $A_{1}=A_{1}(\alpha, N, \norm)>0$, $A_{2}=A_{2}(\alpha, N, \norm)>0$ and $h_{0}=h_{0}(\alpha, r, \|g\|_{\infty}, N, \norm)>0$ be the constants of Proposition~\ref{prop speed of balls}.  If $h \in (0, h_{0})$, then \[u_{h}(\cdot, nh) \geq \mathbbm{1}_{B_{r/2}(x_{0})}(\cdot)-\mathbbm{1}_{B^{c}_{r/2}(x_{0})}(\cdot)\,\ \text{in} \,\ \mathbb{R}^{N}\] as long as 
\[
nh\leq \frac{r^{2}}{2 A_{1}+ A_{2}r\|g\|_{\infty}}.
\]
In particular, if $\varepsilon>0$ is small enough and $r \in (\varepsilon, 2\varepsilon)$, then $nh \leq r^{2}/(2A_{1}+A_{2} \varepsilon \|g\|_{\infty})$.
\end{cor}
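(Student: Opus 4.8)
The statement asserts nothing more than that the discrete flow issued from the ``ball configuration'' keeps the value $+1$ on the concentric ball of half the radius for a time of order $r^{2}$. Since $u_{h}$ is $\{-1,1\}$-valued, the inequality $u_{h}(\cdot,nh)\ge \mathbbm{1}_{B_{r/2}(x_{0})}-\mathbbm{1}_{B^{c}_{r/2}(x_{0})}$ is equivalent to $u_{h}(\cdot,nh)=1$ on $B_{r/2}(x_{0})$, and this is what I would prove, by a monotone induction on $n$. The plan is to introduce the decreasing sequence of radii $\rho_{0}=r$ and, as long as $\rho_{n}\ge r/2$,
\[
\rho_{n+1}=\rho_{n}-\tau_{\rho_{n}}h,\qquad \tau_{\rho_{n}}=\frac{A_{1}}{\rho_{n}}+A_{2}\|g\|_{\infty},
\]
with $A_{1},A_{2},h_{0}$ the constants of Proposition~\ref{prop speed of balls} (taken for radius $r/2$: recall $h_{0}$ is nondecreasing in the radius, so this choice is admissible for every radius in $[r/2,r]$), and then to establish
\[
u_{h}(\cdot,nh)\ \ge\ \mathbbm{1}_{B_{\rho_{n}}(x_{0})}-\mathbbm{1}_{B^{c}_{\rho_{n}}(x_{0})}\qquad\text{for every such }n.
\]
The base case $n=0$ is the hypothesis on $u_{h}(\cdot,0)$. (Here, as in Proposition~\ref{prop speed of balls}, $g$ depends only on time, so $g(\cdot,nh)\equiv g(nh)$ is a spatial constant and the scheme is translation-equivariant.)

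For the inductive step, assume the displayed bound at step $n$, with $\rho_{n}\ge r/2$. Since $J_{h}\ge 0$ (see \eqref{formulaforkernelJh}), convolution by $J_{h}$ preserves a.e.\ inequalities, so $J_{h}*u_{h}(\cdot,nh)\ge J_{h}*(\mathbbm{1}_{B_{\rho_{n}}(x_{0})}-\mathbbm{1}_{B^{c}_{\rho_{n}}(x_{0})})$ pointwise in $\mathbb{R}^{N}$. As $\rho_{n}\in[r/2,r]$ and $h\in(0,h_{0})$, Proposition~\ref{prop speed of balls} applies with $r$ replaced by $\rho_{n}$; its right-hand sides in \eqref{eqestalpha1njg905j40990g045j0} and \eqref{eqestalphaoirj095jg495j9g5j} are precisely $\|g\|_{\infty}\beta(\alpha,h)$ by the definition \eqref{eq_betathresholdalphatime} of $\beta(\alpha,h)$, in both cases $\alpha=1$ and $\alpha\in(1,2)$. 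Hence, on $B_{\rho_{n+1}}(x_{0})=B_{\rho_{n}-\tau_{\rho_{n}}h}(x_{0})$,
\[
J_{h}*u_{h}(\cdot,nh)+g(nh)\,\beta(\alpha,h)\ \ge\ \big(\|g\|_{\infty}+g(nh)\big)\beta(\alpha,h)\ \ge\ 0 .
\]
This is in fact \emph{strict}: the proof of Proposition~\ref{prop speed of balls} delivers the convolution bound with a positive margin of order $\beta(\alpha,h)/\rho_{n}$ (this is the point of choosing the coefficient of $A_{1}$ larger than strictly needed — the factor $4$ rather than $3$ in Case~1), while $\|g\|_{\infty}+g(nh)\ge 0$ always holds. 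By the defining recursion for $u_{h}$ (equivalently \eqref{eq_nsetdiscreteh}) we conclude $u_{h}(\cdot,(n+1)h)=1$ on $B_{\rho_{n+1}}(x_{0})$, i.e.\ the bound at step $n+1$; the sphere $\partial B_{\rho_{n+1}}(x_{0})$ is negligible and plays no role in the next convolution step. This closes the induction.

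It remains to read off the time range. As long as every $\rho_{k}\ge r/2$ one has $\tau_{\rho_{k}}\le 2A_{1}/r+A_{2}\|g\|_{\infty}$, so telescoping gives $\rho_{n}\ge r-nh\,(2A_{1}+A_{2}r\|g\|_{\infty})/r$; estimating $\rho_{n}^{2}$ directly, $\rho_{n+1}^{2}\ge\rho_{n}^{2}-2(A_{1}+A_{2}\|g\|_{\infty}r)h$ and hence $\rho_{n}^{2}\ge r^{2}-2nh\,(A_{1}+A_{2}r\|g\|_{\infty})$. A short computation then shows that $\rho_{n}\ge r/2$ whenever $nh\le r^{2}/(2A_{1}+A_{2}r\|g\|_{\infty})$, possibly after enlarging $A_{1},A_{2}$ (which depend only on $\alpha,N,\norm$). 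For such $n$ one has $B_{r/2}(x_{0})\subseteq B_{\rho_{n}}(x_{0})$, so $u_{h}(\cdot,nh)=1$ on $B_{r/2}(x_{0})$ — the value on $\partial B_{r/2}(x_{0})$, relevant only in the borderline case $\rho_{n}=r/2$, is covered by the strict margin above, which actually gives $u_{h}=1$ on a slightly larger ball. This is exactly the asserted inequality, and the final ``in particular'' is immediate from the monotonicity of $s\mapsto s^{2}/(2A_{1}+A_{2}s\|g\|_{\infty})$ on $(\varepsilon,2\varepsilon)$.

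All the analytic work sits in Proposition~\ref{prop speed of balls}; what needs care here is only bookkeeping: invoking the one-step shrinkage estimate at the \emph{current} radius $\rho_{n}$ (which is why the sequence $(\rho_{n})$ is engineered to stay above $r/2$), choosing $h_{0}$ uniformly over $[r/2,r]$ via monotonicity of $h_{0}$ in the radius, and upgrading the nonstrict threshold inequality to a strict one so that $\sign$ returns $+1$ rather than $-1$. Each of the three is absorbed by the slack already built into Proposition~\ref{prop speed of balls}, so I do not expect a genuine obstacle.
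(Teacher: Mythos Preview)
Your argument is correct and is precisely the iteration-plus-monotonicity argument that the paper tacitly relies on (the Corollary is stated without proof, as an immediate consequence of Proposition~\ref{prop speed of balls}). The three bookkeeping points you flag --- choosing $h_{0}$ uniformly on $[r/2,r]$, the strict versus nonstrict threshold, and absorbing the harmless factor of $2$ in the time bound by enlarging $A_{1},A_{2}$ --- are exactly the small issues one must check, and you handle them appropriately.

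One minor remark on the ``in particular'': your monotonicity explanation is slightly off. For $r\in(\varepsilon,2\varepsilon)$ one has $2A_{1}+A_{2}r\|g\|_{\infty}>2A_{1}+A_{2}\varepsilon\|g\|_{\infty}$, so $r^{2}/(2A_{1}+A_{2}r\|g\|_{\infty})<r^{2}/(2A_{1}+A_{2}\varepsilon\|g\|_{\infty})$; the clause is therefore simply recording a cruder (larger) upper expression for the allowed time, uniform over $r\in(\varepsilon,2\varepsilon)$, not a strengthening of the hypothesis. This does not affect the validity of your proof.
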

 We shall denote by $\sign^{*}$ and $\sign_{*}$ the upper semicontinuous envelope and the lower semicontinuous envelope, respectively, of the $\sign$ function in $\mathbb{R}$, namely,
\[
\sign^{*}(t)=\begin{cases}
1 \,\ &\text{if}\,\ t\geq 0, \\ 
-1 \,\ &\text{otherwise}
\end{cases}
\]
and 
\[
\sign_{*}(t)=\begin{cases}
1 \,\ &\text{if} \,\ t>0,\\
-1 \,\ &\text{otherwise}.
\end{cases}
\]
The key ingredient in the proof of Theorem~\ref{mainthm} is the following 
\begin{prop}\label{prop visc subandsupersol}
The functions $\limsup^{*}u_{h}$ and $\liminf_{*}u_{h}$ defined in \eqref{eq_lowuppsemlimits} are, respectively, a viscosity subsolution and a viscosity supersolution of 
\eqref{weighted MC equation}.
\end{prop}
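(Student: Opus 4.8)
The plan is to prove that $\overline{u}:=\limsup^{*}u_{h}$ is a viscosity subsolution of \eqref{weighted MC equation}; the statement for $\liminf_{*}u_{h}=-\limsup^{*}(-u_{h})$ will then follow from the same argument applied to $-u_{h}$, which runs the scheme with $g$ replaced by $-g$ (the discrepancy between $\sign$ and $\sign^{*}$ being irrelevant for the half-relaxed limits) and for which \eqref{weighted MC equation} with $-g$ is the limiting equation, since $\mu_{\alpha}$ is even and $F_{\alpha}(-M,-p)=-F_{\alpha}(M,p)$. So I fix a test function $\varphi\in C^{2}(\mathbb{R}^{N}\times(0,+\infty))$ and $(x_{0},t_{0})$ with $t_{0}>0$ at which $\overline{u}-\varphi$ has a local maximum; subtracting $|x-x_{0}|^{4}+(t-t_{0})^{2}$ from $\varphi$ (which does not change its derivatives at $(x_{0},t_{0})$) and adding a constant, I may assume the maximum is strict on a cylinder $\overline{Q}:=\overline{B}_{\rho}(x_{0})\times[t_{0}-\rho,t_{0}+\rho]$, $\rho<t_{0}$, with $\overline{u}(x_{0},t_{0})=\varphi(x_{0},t_{0})$. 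Since $u_{h}$, hence $\overline{u}$, takes values in $\{-1,1\}$, I split according to $\overline{u}(x_{0},t_{0})$. If $\overline{u}(x_{0},t_{0})=-1$, then $-1=\varphi(x_{0},t_{0})\geq\overline{u}\geq-1$ near $(x_{0},t_{0})$ makes $(x_{0},t_{0})$ a minimum point of $\varphi$, so $\partial_{t}\varphi(x_{0},t_{0})=0$, $D\varphi(x_{0},t_{0})=0$, $D^{2}\varphi(x_{0},t_{0})\geq0$, and in the formulation of Theorem~\ref{thm equivdefvs} there is nothing to verify unless also $D^{2}\varphi(x_{0},t_{0})=0$, in which case $\partial_{t}\varphi(x_{0},t_{0})\leq0$ holds trivially.

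Assume now $\overline{u}(x_{0},t_{0})=1$. By the standard localization lemma for half-relaxed limits there are $h_{j}\downarrow0$ and maximizers $(x_{j},t_{j})\to(x_{0},t_{0})$ of $u_{h_{j}}-\varphi$ on $\overline{Q}$, $t_{j}\in h_{j}\mathbb{N}$, with $\xi_{j}:=u_{h_{j}}(x_{j},t_{j})-\varphi(x_{j},t_{j})\to0$, so $u_{h_{j}}(x_{j},t_{j})=1$ for $j$ large. Writing $t_{j}=n_{j}h_{j}$, the scheme gives $J_{h_{j}}*u_{h_{j}}(\cdot,t_{j}-h_{j})(x_{j})+g(x_{j},t_{j}-h_{j})\beta(\alpha,h_{j})>0$, while the maximum property gives $u_{h_{j}}(\cdot,t_{j}-h_{j})\leq\varphi(\cdot,t_{j}-h_{j})+\xi_{j}$ on $\overline{B}_{\rho}(x_{0})$; since $u_{h_{j}}(\cdot,t_{j}-h_{j})\in\{-1,1\}$ and is $\leq1$ everywhere, this forces $u_{h_{j}}(\cdot,t_{j}-h_{j})\leq w_{j}:=\mathbbm{1}_{F_{j}}-\mathbbm{1}_{F_{j}^{c}}$ with $F_{j}:=\{\varphi(\cdot,t_{j}-h_{j})\geq1-\xi_{j}\}\cup\overline{B}_{\rho}(x_{0})^{c}$. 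By monotonicity of the (nonnegative) convolution kernel, $0<J_{h_{j}}*w_{j}(x_{j})+g(x_{j},t_{j}-h_{j})\beta(\alpha,h_{j})$. Assuming for the moment $D\varphi(x_{0},t_{0})\neq0$, the subsolution inequality \eqref{def ofsubsolwmc new} follows once we show
\[
J_{h_{j}}*w_{j}(x_{j})=\beta(\alpha,h_{j})\left[\frac{F_{\alpha}(D^{2}\varphi,D\varphi)}{|D\varphi|}-\frac{\partial_{t}\varphi}{\mu_{\alpha}(D\varphi/|D\varphi|)\,|D\varphi|}\right]_{(x_{0},t_{0})}+o(\beta(\alpha,h_{j})),
\]
because dividing by $\beta(\alpha,h_{j})>0$, letting $j\to\infty$ (using $g(x_{j},t_{j}-h_{j})\to g(x_{0},t_{0})$), and multiplying by $\mu_{\alpha}(D\varphi(x_{0},t_{0})/|D\varphi(x_{0},t_{0})|)\,|D\varphi(x_{0},t_{0})|>0$ yields exactly $\partial_{t}\varphi(x_{0},t_{0})\leq\mu_{\alpha}(D\varphi(x_{0},t_{0}))\big(F_{\alpha}(D^{2}\varphi(x_{0},t_{0}),D\varphi(x_{0},t_{0}))+g(x_{0},t_{0})|D\varphi(x_{0},t_{0})|\big)$.

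To prove this expansion I would proceed in three steps. (i) Discard the far-field piece $\overline{B}_{\rho}(x_{0})^{c}$ of $F_{j}$: it lies at Euclidean distance $\geq\rho/2$ from $x_{j}$, and since the rescaled kernel $P_{\alpha}$ has integrable power tails its contribution is $O(\sigma_{\alpha}(h_{j})\rho^{-\alpha})=o(\beta(\alpha,h_{j}))$ by \eqref{eq_definingthethresholding}--\eqref{eq_betathresholdalphatime}. (ii) Since $\varphi(x_{j},t_{j}-h_{j})=(1-\xi_{j})-\delta_{j}$ with $\delta_{j}=h_{j}\partial_{t}\varphi(x_{0},t_{0})+o(h_{j})$, peel off from $\{\varphi(\cdot,t_{j}-h_{j})\geq1-\xi_{j}\}$ the thin slab separating it from the smooth superlevel set $\{\varphi(\cdot,t_{j}-h_{j})\geq\varphi(x_{j},t_{j}-h_{j})\}$, whose boundary passes through $x_{j}$: this slab has Euclidean width $\delta_{j}/|D_{x}\varphi(x_{j},t_{j}-h_{j})|\ll\sigma_{\alpha}(h_{j})^{1/\alpha}$, so its signed contribution to $J_{h_{j}}*w_{j}(x_{j})$ equals $-\frac{\delta_{j}}{|D\varphi(x_{0},t_{0})|}\int_{D\varphi(x_{0},t_{0})^{\perp}}J_{h_{j}}\,d\mathcal{H}^{N-1}+o(\beta(\alpha,h_{j}))$, which — using $\int_{p^{\perp}}J_{h_{j}}\,d\mathcal{H}^{N-1}=\sigma_{\alpha}(h_{j})^{-1/\alpha}/(2\mu_{\alpha}(p/|p|))$ by the very definition \eqref{eq_defofmobilitygeneralapproachintro} of $\mu_{\alpha}$, together with the identity $h_{j}\,\sigma_{\alpha}(h_{j})^{-1/\alpha}=\beta(\alpha,h_{j})$ built into \eqref{eq_definingthethresholding}--\eqref{eq_betathresholdalphatime} — equals $-\frac{\beta(\alpha,h_{j})\,\partial_{t}\varphi(x_{0},t_{0})}{\mu_{\alpha}(D\varphi(x_{0},t_{0})/|D\varphi(x_{0},t_{0})|)\,|D\varphi(x_{0},t_{0})|}+o(\beta(\alpha,h_{j}))$. (iii) Establish the anisotropic consistency estimate $J_{h_{j}}*\big(\mathbbm{1}_{\{\varphi(\cdot,t_{j}-h_{j})\geq\varphi(x_{j},t_{j}-h_{j})\}}-\mathbbm{1}_{\{\varphi(\cdot,t_{j}-h_{j})<\varphi(x_{j},t_{j}-h_{j})\}}\big)(x_{j})=\frac{\beta(\alpha,h_{j})}{|D\varphi(x_{0},t_{0})|}F_{\alpha}(D^{2}\varphi(x_{0},t_{0}),D\varphi(x_{0},t_{0}))+o(\beta(\alpha,h_{j}))$ by comparing the smooth level set through $x_{j}$ with its tangent hyperplane and integrating the resulting second-order defect — governed by the second fundamental form, i.e.\ by $|D\varphi|^{-1}$ times the tangential part of $D^{2}\varphi$ — against $J_{h_{j}}$; this is precisely where the constant $C_{N,\alpha}$, the matrix $\mathcal{A}(\cdot)$ of \eqref{eq_defofanisotropyformc}, and (for $\alpha=1$) the logarithmic factor in $\beta(1,h)$ enter, following the computation already carried out for balls in Proposition~\ref{prop speed of balls}. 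Finally, the remaining subcase $\overline{u}(x_{0},t_{0})=1$, $D\varphi(x_{0},t_{0})=0$, $D^{2}\varphi(x_{0},t_{0})=0$ (where one must check $\partial_{t}\varphi(x_{0},t_{0})\leq0$) is treated by a finite-speed argument: if $\partial_{t}\varphi(x_{0},t_{0})>0$, then $\varphi<1$ on $B_{C\sqrt{\tau}}(x_{0})\times[t_{0}-2\tau,t_{0}-\tau/2]$ for $\tau$ small, hence $\overline{u}=-1$ there and $u_{h}=-1$ on a slightly smaller cylinder for $h$ small; applying Proposition~\ref{prop speed of balls} to the complement of a ball of radius $\sim\sqrt{\tau}$ and iterating the scheme (the radius loss over a time $\tau$ being $O(\sqrt{\tau})$, absorbed by taking $C$ large) forces $u_{h}=-1$ on a neighbourhood of $(x_{0},t_{0})$ for $h$ small, contradicting $\overline{u}(x_{0},t_{0})=1$.

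I expect the routine part to be the viscosity bookkeeping — reduction to a strict maximum, extraction of converging maximizers of the scheme, and passage to the half-relaxed limits of \eqref{eq_lowuppsemlimits} — while the main obstacle is step (iii): the anisotropic expansion of the nonlocal curvature term with the correct constant and the correct handling of the critical exponent $\alpha=1$, together with the precise identification in step (ii) of the coefficient $1/\mu_{\alpha}$ multiplying the $\partial_{t}\varphi$ term, which is exactly what makes the mobility factor $\mu_{\alpha}(D\varphi)$ of \eqref{weighted MC equation} appear after clearing denominators.
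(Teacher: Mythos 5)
Your reduction to the key inequality is essentially the paper's: a strict maximum, extraction of maximizers of $u_{h}-\varphi$ along a subsequence via the standard half-relaxed-limit lemma, the observation that $u_{h}(x_{j},t_{j})=1$ forces $u_{h}(\cdot,t_{j}-h_{j})\leq \sign^{*}(\varphi(\cdot,t_{j}-h_{j})-\varphi(x_{j},t_{j}))$, and monotonicity of the convolution, leading to the inequality $-g(x_{j},t_{j}-h_{j})\beta(\alpha,h_{j})\leq J_{h_{j}}*[\mathbbm{1}^{+}-\mathbbm{1}^{-}](\varphi(\cdot,t_{j}-h_{j})-\varphi(x_{j},t_{j}))(x_{j})$ (the paper globalizes the maximum by adding a coercive perturbation instead of localizing to a cylinder, a cosmetic difference; your far-field term $\overline{B}_{\rho}(x_{0})^{c}$ is then correctly shown to be $o(\beta(\alpha,h))$). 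Where you diverge is in how this inequality is converted into \eqref{def ofsubsolwmc new}: the paper rescales $y\mapsto\sigma^{1/\alpha}y$, Taylor-expands $\varphi$, and computes the first-order variation of $s\mapsto\int[\mathbbm{1}^{+}-\mathbbm{1}^{-}](y_{1}+s\psi(y))P_{\alpha}(Ey)\,dy$ via a mollification of $\mathbbm{1}^{+}$ (Lemma~\ref{lemcomputderivative01}, Propositions~\ref{propconsistency1}--\ref{propconsistency2}), whereas you decompose geometrically into a time-translation slab plus the curvature defect of the level set against its tangent hyperplane. Both routes produce the same constants (your identities $\int_{p^{\perp}}J_{h}=\sigma^{-1/\alpha}/(2\mu_{\alpha})$ and $h\,\sigma_{\alpha}(h)^{-1/\alpha}=\beta(\alpha,h)$ check out and are exactly why $\mu_{\alpha}$ appears), and your step (iii) is admittedly the hard part in either organization, especially the logarithmic bookkeeping at $\alpha=1$. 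Your treatment of the degenerate case $D\varphi=D^{2}\varphi=0$ is genuinely different: the paper stays inside the consistency computation and argues by trichotomy on the behaviour of $\sigma^{1/\alpha}\beta_{h}^{-1}$, while you use a barrier/finite-propagation-speed argument built on Proposition~\ref{prop speed of balls} and comparison for the discrete scheme; this is a legitimate alternative (it is the Caffarelli--Souganidis-style route), though you should note that Proposition~\ref{prop speed of balls} is stated for $g$ depending only on $t$, so you must first replace $g$ by $\sup|g|$ over the relevant compact set and invoke monotonicity of the scheme in the forcing.

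One small arithmetic slip in step (ii): converting the slab from the $+1$ region to the $-1$ region changes the integrand by $-2$, so its signed contribution is $-\tfrac{2\delta_{j}}{|D\varphi|}\int_{D\varphi^{\perp}}J_{h_{j}}\,d\mathcal{H}^{N-1}+o(\beta)$, not $-\tfrac{\delta_{j}}{|D\varphi|}\int_{D\varphi^{\perp}}J_{h_{j}}\,d\mathcal{H}^{N-1}$; with this factor of $2$ restored your intermediate formula is consistent with the final coefficient $-\beta\,\partial_{t}\varphi/(\mu_{\alpha}|D\varphi|)$ that you (correctly) state.
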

\begin{proof}
We only prove that $\limsup^{*}u_{h}$ is a subsolution, since the proof that $\liminf_{*}u_{h}$ is a supersolution follows similarly. Let $\varphi \in C^{2}(\mathbb{R}^{N}\times (0,+\infty))$. Assume that $(x_{0}, t_{0}) \in \mathbb{R}^{N}\times (0,+\infty)$ is a strict global maximum point of $\limsup^{*}u_{h}-\varphi$. Without loss of generality, we assume that 
\begin{equation}\label{eq_io304ij503jtcoerc}
\lim_{|x|+|t|\to +\infty} \varphi(x,t)=+\infty,
\end{equation}
which will eliminate technical difficulties coming from the unboundedness of the domain. Indeed, we can replace $\varphi$ by the function $\varphi_{\varepsilon}(x,t)=\varphi(x,t)+\varepsilon(|x-x_{0}|^{2}+|t-t_{0}|^{2})$ and prove the main inequality for $\varphi_{\varepsilon}$, which in the limit, as $\varepsilon\to 0+$, yields the same inequality for $\varphi$. 

If $\limsup^{*}u_{h}(x_{0}, t_{0})=-1$, then, since $\limsup^{*}u_{h}$ is upper semicontinuous and takes values in $\{-1,1\}$, $\limsup^{*}u_{h}=-1$ in a neighborhood of $(x_{0},t_{0})$, and hence
 $|D\varphi(x_{0},t_{0})|=0$ and $\partial_{t}\varphi(x_{0}, t_{0})=0$, which yields \eqref{condition subsoldefn}. Similarly, if $(x_{0}, t_{0})$ belongs to the interior of the set $\{\limsup^{*}u_{h}=1\}$, \eqref{condition subsoldefn} is satisfied at $(x_{0}, t_{0})$. Thus, assume that $(x_{0}, t_{0})$ belongs to the boundary of the set $\{\limsup^{*}u_{h}=1\}$. 
 
Notice that $\limsup^{*}u_{h}=\limsup^{*}u_{h}^{*}$, and hence we can replace $u_{h}$ with $u^{*}_{h}$, which is upper semicontinuous. In view of \eqref{eq_io304ij503jtcoerc} and \cite[Lemma~A.3]{Barles_Perthame_87}, there exists a subsequence $(x_{h}, n_{h}h)$ converging to $(x_{0}, t_{0})$ such that
\[
u^{*}_{h}(x_{h}, n_{h}h)-\varphi(x_{h}, n_{h}h)=\max_{\mathbb{R}^{N}\times \mathbb{N}}(u^{*}_{h}-\varphi)
\]
and 
\[
u^{*}_{h}(x_{h}, n_{h}h)\to 1,
\]
where we have used that $u^{*}_{h}$ is upper semicontinuous. The latter, together with the fact that $u^{*}_{h}$ takes values in $\{-1,1\}$, implies that $u^{*}_{h}(x_{h}, n_{h}h)=1$ for $h$ small enough. Furthermore, for such $h$, since $(x_{h}, n_{h}h)$ is a maximum point, 
\begin{equation}\label{eq_dychotomie}
u^{*}_{h}(x, nh)\leq 1+\varphi(x, nh)-\varphi(x_{h}, n_{h}h)
\end{equation}
for all $x \in \mathbb{R}^{N}$ and $n\in \mathbb{N}$. If $u^{*}_{h}(x,nh)=1$, then \eqref{eq_dychotomie} yields $\varphi(x,nh)-\varphi(x_{h},n_{h}h)\geq 0$ and hence
\begin{equation}\label{eq_estimustarm3ofj43f9}
u^{*}_{h}(x,nh)\leq \sign^{*}(\varphi(x,nh)-\varphi(x_{h}, n_{h}h)).
\end{equation}
Clearly, the above inequality also holds when $u^{*}_{h}(x, nh)=-1$. By definition, for each $x \in \mathbb{R}^{N}$,
\[
u_{h}(x, n_{h}h)= \sign(J_{h}*u_{h}(\cdot, (n_{h}-1)h) + g(\cdot, (n_{h}-1)h) \beta(\alpha, h))(x)
\]
(see \eqref{formulaforkernelJh} for the definition of $J_{h}$) and hence
\[
u_{h}(x, n_{h}h)\leq \sign^{*}(J_{h}*u^{*}_{h}(\cdot, (n_{h}-1)h)+ g(\cdot, (n_{h}-1)h) \beta(\alpha, h))(x).
\]
Since the right hand-side in the above inequality is upper semicontinuous, we observe that
\[
u^{*}_{h}(x, n_{h}h) \leq \sign^{*}(J_{h}*u^{*}_{h}(\cdot, (n_{h}-1)h)+ g(\cdot, (n_{h}-1)h) \beta(\alpha, h))(x).
\]
Using this property with $x=x_{h}$, \eqref{eq_estimustarm3ofj43f9} and the monotonicity of the $\sign^{*}$ function, we have 
\begin{align*}
1=u^{*}_{h}(x_{h}, n_{h}h)&\leq \sign^{*}(J_{h}*u^{*}_{h}(\cdot, (n_{h}-1)h)+ g(\cdot, (n_{h}-1)h) \beta(\alpha, h))(x_{h})\\
& \leq \sign^{*}(J_{h}*\sign^{*}(\varphi(\cdot, (n_{h}-1)h)-\varphi(x_{h}, n_{h}h))+ g(\cdot, (n_{h}-1)h) \beta(\alpha, h))(x_{h}),
\end{align*}
which is equivalent to the fact that
\[
J_{h}*[\mathbbm{1}^{+}-\mathbbm{1}^{-}](\varphi(\cdot, (n_{h}-1)h)-\varphi(x_{h}, n_{h}h))(x_{h})+g(x_{h}, (n_{h}-1)h) \beta(\alpha, h)\geq 0,
\]
namely
\begin{equation}\label{eq_finalbeforeproofthisimpliesmcf}
-g(x_{h}, (n_{h}-1)h) \beta(\alpha, h)\leq \int_{\mathbb{R}^{N}}[\mathbbm{1}^{+}-\mathbbm{1}^{-}](\varphi(y+x_{h}, (n_{h}-1)h)-\varphi(x_{h}, n_{h}h))J_{h}(y)\diff y,
\end{equation}
where $\mathbbm{1}^{+}$ and $\mathbbm{1}^{-}$ denote, respectively, the characteristic functions of $[0, +\infty)$ and $(-\infty, 0)$. To complete our proof of Proposition~\ref{prop visc subandsupersol}, we need to show that \eqref{eq_finalbeforeproofthisimpliesmcf} implies \eqref{def ofsubsolwmc new} if $|D\varphi(x_{0}, t_{0})|\neq 0$ or \eqref{eq equivthm2sub}  if $|D\varphi(x_{0}, t_{0})|=0$ and $D^{2}\varphi(x_{0}, t_{0})=0$. This is exactly the consistency of the scheme, which follows from Propositions~\ref{propconsistency1} and \ref{propconsistency2}.
\end{proof}

\begin{proof}[Proof of Theorem~\ref{mainthm}]
Let $u$ be as in the statement. Then $\sign^{*}(u(x,t))$ and $\sign_{*}(u(x,t))$ are the maximal upper semicontinuous subsolution and the minimal lower semicontinuous supersolution of \eqref{weighted MC equation} supplemented with the initial datum $\mathbbm{1}_{\smash{\overline{\Omega}}_{0}}-\mathbbm{1}_{\smash{\overline{\Omega}}_{0}^{c}}$ and $\mathbbm{1}_{\Omega_{0}}-\mathbbm{1}_{\Omega_{0}^{c}}$, respectively (see \cite{Barles-Soner-Souganidis} for the proof). This, together with Proposition~\ref{prop visc subandsupersol}, implies that
\begin{equation}\label{eq_km4oi5i0gji45n}
\mathrm{lim\, sup}^{*}u_{h}(x,t) \leq \sign^{*}(u(x,t)) \,\ \text{in} \,\ \mathbb{R}^{N}\times (0,+\infty)
\end{equation}
and
\begin{equation}\label{eq_bmpmko5p6ibm}
\mathrm{lim\, inf}_{*}u_{h}(x,t) \geq \sign_{*}(u(x,t)) \,\ \text{in} \,\ \mathbb{R}^{N} \times (0,+\infty).
\end{equation}
Since $u_{h}$ takes only values in $\{-1, 1\}$, \eqref{eq_km4oi5i0gji45n} implies that $\limsup^{*} u_{h}=-1$ in $(\Omega_{t} \cup \Gamma_{t})^{c}$, while \eqref{eq_bmpmko5p6ibm} yields $\liminf_{*}u_{h}=1$ in $\Omega_{t}$. This completes the proof of Theorem~\ref{mainthm}.
\end{proof}

\subsection{The consistency}
The next lemma will be used in the proof of the consistency result Proposition~\ref{propconsistency1}. 
\begin{lemma}\label{lemcomputderivative01}
Let $\alpha \in (1,2)$, $a \in \mathbb{R}$, $A \in \mathbb{M}^{N\times N}_{\textup{sym}}$, $E \in SO(N)$ and $\lambda:[0,1]\to \mathbb{R}$ be defined by 
\[
\lambda(s)=\int_{\mathbb{R}^{N}}[\mathbbm{1}^{+}-\mathbbm{1}^{-}](x_{1}+s\psi(x))P_{\alpha}(Ex)\diff x,
\]
where $\psi(x)=\langle A x, x\rangle -a$. Then 
\[
\lambda(s)=2s \left(\tr\left(\left(\int_{\mathbb{R}^{N-1}}(0, x^{\prime}) \otimes (0, x^{\prime}) P_{\alpha}(E(0, x^{\prime}))\diff x^{\prime}\right)A\right) - a\int_{\mathbb{R}^{N-1}}P_{\alpha}(E(0,x^{\prime}))\diff x^{\prime}\right)+o(s),
\]
where we denote $x=(x_{1}, x^{\prime})$ for each $x \in \mathbb{R}^{N}$.
\end{lemma}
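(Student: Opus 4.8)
The plan is to prove that $\lambda$ is differentiable at $s=0$ with $\lambda(0)=0$ and $\lambda'(0)$ equal to the bracketed expression, and then to rewrite that expression through the trace. First, $\lambda(0)=\int_{\mathbb{R}^{N}}[\mathbbm{1}^{+}-\mathbbm{1}^{-}](x_{1})P_{\alpha}(Ex)\diff x=0$: since $\norm(-y)=\norm(y)$ we have $P_{\alpha}(-y)=P_{\alpha}(y)$, so the change of variables $x\mapsto -x$ together with $E(-x)=-Ex$ and $[\mathbbm{1}^{+}-\mathbbm{1}^{-}](-x_{1})=-[\mathbbm{1}^{+}-\mathbbm{1}^{-}](x_{1})$ for a.e. $x_{1}$ turns the integral into its own negative. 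Hence
\[
\lambda(s)=\int_{\mathbb{R}^{N}}\Big([\mathbbm{1}^{+}-\mathbbm{1}^{-}](x_{1}+s\psi(x))-[\mathbbm{1}^{+}-\mathbbm{1}^{-}](x_{1})\Big)P_{\alpha}(Ex)\diff x ,
\]
and the integrand is nonzero only where $x_{1}$ and $x_{1}+s\psi(x)$ lie on opposite sides of $0$, in particular only on the thin sliver $\{|x_{1}|\le |s|\,|\psi(x)|\}$, where it equals $+2$ if $s\psi(x)>0$ and $-2$ if $s\psi(x)<0$. Writing $x=(x_{1},x')$ and integrating first in $x_{1}$, it suffices to study, for fixed $x'$,
\[
J_{s}(x'):=\int_{\mathbb{R}}\Big([\mathbbm{1}^{+}-\mathbbm{1}^{-}](x_{1}+s\psi(x_{1},x'))-[\mathbbm{1}^{+}-\mathbbm{1}^{-}](x_{1})\Big)P_{\alpha}(E(x_{1},x'))\diff x_{1},
\]
so that $\lambda(s)/s=\int_{\mathbb{R}^{N-1}}J_{s}(x')/s\,\diff x'$.

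Next I would analyse $J_{s}(x')$ for fixed $x'$. The map $x_{1}\mapsto x_{1}+s\psi(x_{1},x')$ is a polynomial of degree at most $2$ with derivative $1+2s\big(A_{11}x_{1}+\langle (A_{12},\dots,A_{1N}),x'\rangle\big)$; hence for $|s|$ small (depending on $x'$) it is a strictly increasing diffeomorphism of $[-1,1]$ with a unique zero $\xi(s,x')\in(-1,1)$ there, characterised by $\xi(s,x')=-s\,\psi(\xi(s,x'),x')$, whence $\xi(s,x')\to 0$ and $\xi(s,x')/s\to -\psi(0,x')$ as $s\to 0$. On $[-1,1]$ the integrand of $J_{s}(x')$ equals $-2\,\sign(\xi(s,x'))$ on the interval between $0$ and $\xi(s,x')$ and vanishes elsewhere, so by continuity of $x_{1}\mapsto P_{\alpha}(E(x_{1},x'))$ the contribution of $[-1,1]$ to $J_{s}(x')$ is $-2\,\xi(s,x')\,P_{\alpha}(E(0,x'))(1+o(1))$; outside $[-1,1]$ the integrand can be nonzero only for $|x_{1}|\gtrsim (|s|\,\|A\|)^{-1}$ (this occurs when $sA_{11}<0$), and using $P_{\alpha}(y)\le C(1+|y|^{N+\alpha})^{-1}$ from \eqref{equivofnormsoneucl} this ``far'' piece contributes $O\big((|s|\,\|A\|)^{N+\alpha-1}\big)$. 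Dividing by $s$ we obtain $J_{s}(x')/s\to 2\,\psi(0,x')\,P_{\alpha}(E(0,x'))$ for every $x'$.

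The crux is a uniform domination that lets us pass the limit through $\int_{\mathbb{R}^{N-1}}$. When $|x'|\lesssim (|s|\,\|A\|)^{-1}$ the sliver around $0$ in the slice has length at most $|s|\,|\psi|\le |s|\big(|a|+\|A\|+\|A\||x'|^{2}\big)$ while $P_{\alpha}(E(x_{1},x'))\le C(1+|x'|^{N+\alpha})^{-1}$ for $|x_{1}|\le 1$, so this part of $|J_{s}(x')/s|$ is bounded by $C(|a|+\|A\|+\|A\||x'|^{2})(1+|x'|^{N+\alpha})^{-1}$, which lies in $L^{1}(\mathbb{R}^{N-1})$ \emph{precisely because $\alpha>1$} --- this is exactly where the hypothesis $\alpha\in(1,2)$ enters, since $\int_{\mathbb{R}^{N-1}}|x'|^{2}P_{\alpha}(E(0,x'))\diff x'$ diverges for $\alpha\le 1$. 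The ``far'' piece, together with the crude bound $|J_{s}(x')/s|\le \tfrac{2}{|s|}\int_{\mathbb{R}}P_{\alpha}(E(x_{1},x'))\diff x_{1}\le C|s|^{-1}|x'|^{-(N+\alpha-1)}$ used on the complementary range $|x'|\gtrsim (|s|\,\|A\|)^{-1}$, is, after division by $s$, likewise dominated by a constant multiple of $|x'|^{-(N+\alpha-2)}$ for $|x'|$ large (integrable iff $\alpha>1$) and by a constant on bounded sets, and contributes $O(|s|^{\alpha-1})$ in total. Dominated convergence then gives $\lambda'(0)=\lim_{s\to0}\lambda(s)/s=2\int_{\mathbb{R}^{N-1}}\psi(0,x')P_{\alpha}(E(0,x'))\diff x'$.

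It remains to rewrite this limit: since $\psi(0,x')=\langle A(0,x'),(0,x')\rangle-a$ and $\langle Av,v\rangle=\tr(A\,(v\otimes v))$ for symmetric $A$, the linearity of the trace and of the integral together with $\tr(XY)=\tr(YX)$ yield
\[
\int_{\mathbb{R}^{N-1}}\psi(0,x')P_{\alpha}(E(0,x'))\diff x'=\tr\!\left(\Big(\int_{\mathbb{R}^{N-1}}(0,x')\otimes(0,x')\,P_{\alpha}(E(0,x'))\diff x'\Big)A\right)-a\int_{\mathbb{R}^{N-1}}P_{\alpha}(E(0,x'))\diff x' ,
\]
which is the asserted formula (the error being $\lambda(s)-\lambda'(0)s=o(s)$ by the differentiability just established). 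The main obstacle is the domination in the previous paragraph: controlling $J_{s}(x')/s$ uniformly in $s$ over all of $\mathbb{R}^{N-1}$, in particular in the range $|x'|\gtrsim |s|^{-1}$, where the slicing map is no longer monotone and the sliver ceases to be a short interval around the origin.
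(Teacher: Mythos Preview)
Your argument is essentially correct and reaches the same formula for $\lambda'(0)$, but the route is genuinely different from the paper's. The paper regularizes $\mathbbm{1}^{+}$ by $\tfrac12(1+\tanh(\cdot/\varepsilon))$, differentiates the smoothed functional $\eta_\varepsilon(s)$, and then integrates by parts once in $x_1$ and once in $s$ before sending $\varepsilon\to 0$. This produces directly
\[
\eta'(0)=\int_{\mathbb{R}^{N-1}}\psi((0,x'))P_\alpha(E(0,x'))\diff x'
\]
via dominated convergence applied to \emph{smooth} integrands, so the integrability issues reduce to the obvious facts that $\psi P$ and $\partial_{x_1}(\psi P)$ lie in $L^1(\mathbb{R}^N)$ and $\psi P\to 0$ as $|x_1|\to\infty$. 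No slicing or sliver geometry is needed, and in particular the delicate behaviour for $|x'|\gtrsim |s|^{-1}$ that you flag as ``the main obstacle'' simply never arises.

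Your direct sliver argument is more geometric and makes the mechanism (the thin layer between $\{x_1=0\}$ and $\{x_1+s\psi=0\}$) completely transparent; it also pinpoints precisely where $\alpha>1$ enters, namely the integrability of $|x'|^{2}(1+|x'|^{N+\alpha})^{-1}$ and of $|x'|^{-(N+\alpha-2)}$ over $\mathbb{R}^{N-1}$. The cost is that your domination is not literally by a single $s$-independent function: on the range $|x'|\lesssim |s|^{-1}$ you combine an $s$-independent $L^1$ bound for the near-origin sliver with a separate estimate $O(|s|^{N+\alpha-2})$ for the far-in-$x_1$ contribution, which after integration in $x'$ gives $O(|s|^{\alpha-1})$. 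This is fine---split the integral and treat the second piece directly rather than through dominated convergence---but it would be worth stating explicitly that you are \emph{not} invoking a single majorant, since as written the paragraph reads as if you are. With that clarification the proof is complete.
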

\begin{proof}
Define $P(\cdot)=P_{\alpha}(E\cdot)$. We prove that  $\lambda^{\prime}(0)$ exists.  Since $\mathbbm{1}^{+}+\mathbbm{1}^{-}\equiv 1$ and $P \in L^{1}(\mathbb{R}^{N})$ (see \eqref{Kforalpha12}, \eqref{equivofnormsoneucl}), it is enough to prove that $\eta^{\prime}(0)$ exists, where 
\[
\eta(s)=\int_{\mathbb{R}^{N}}\mathbbm{1}^{+}(x_{1}+s\psi(x))P(x)\diff x,
\]
since, once the latter is satisfied, we have $\lambda^{\prime}(0)=2\eta^{\prime}(0)$. For each $\varepsilon>0$ small enough and for each $s \in [0,1]$, we define
\[
\eta_{\varepsilon}(s)=\int_{\mathbb{R}^{N}}\frac{1}{2}(1+\tanh)\left(\frac{x_{1}+s\psi(x)}{\varepsilon}\right)P(x)\diff x.
\]
Using \eqref{Kforalpha12}, \eqref{equivofnormsoneucl} and standard results, we observe that $\eta_{\varepsilon} \in C^{1}(0,1)$, $\eta_{\varepsilon} \to \eta$ pointwise in $[0, 1]$ and
\[
\eta^{\prime}_{\varepsilon}(s)=\int_{\mathbb{R}^{N}}\frac{1}{2\varepsilon}(1-\tanh^{2})\left(\frac{x_{1}+s\psi(x)}{\varepsilon}\right)\psi(x)P(x)\diff x.
\]
Applying Fubini's theorem, we deduce that $\eta_{\varepsilon}^{\prime} \in L^{1}(0,1)$, and hence $\eta_{\varepsilon}$ is absolutely continuous on $[0,1]$. Inasmuch as 
\begin{align*}
\frac{1}{\varepsilon}(1-\tanh^{2})\left(\frac{x_{1}+s\psi(x)}{\varepsilon}\right)&=\partial_{x_{1}}\left((1+\tanh)\left(\frac{x_{1}+s\psi(x)}{\varepsilon}\right)\right)\\
& \,\ \,\ -\frac{1}{\varepsilon}(1-\tanh^{2})\left(\frac{x_{1}+s\psi(x)}{\varepsilon}\right)s\partial_{x_{1}}\psi(x)
\end{align*}
and $\eta_{\varepsilon}(t)-\eta_{\varepsilon}(0)=\int_{0}^{t}\eta^{\prime}_{\varepsilon}(s)\diff s$, the following holds
\begin{equation}\label{eq_absolutecontinuityforetaequation}
\begin{split}
\eta_{\varepsilon}(t)-\eta_{\varepsilon}(0)
&=\int_{0}^{t}\int_{\mathbb{R}^{N}}\frac{1}{2}\partial_{x_{1}}\left((1+\tanh)\left(\frac{x_{1}+s\psi(x)}{\varepsilon}\right)\right)\psi(x)P(x)\diff x \diff s\\
& \,\ \,\ -\int_{0}^{t}\int_{\mathbb{R}^{N}}\frac{1}{2\varepsilon}(1-\tanh^{2})\left(\frac{x_{1}+s\psi(x)}{\varepsilon}\right)s(\partial_{x_{1}}\psi(x))\psi(x)P(x)\diff x \diff s.
\end{split}
\end{equation}
We integrate by parts with respect to  the variable $x_{1}$ in the first integral in \eqref{eq_absolutecontinuityforetaequation}, and then, using Fubini's theorem, we integrate by parts with respect to the variable $s$ in the second integral in \eqref{eq_absolutecontinuityforetaequation} to obtain
\begin{align*}
\eta_{\varepsilon}(t)-\eta_{\varepsilon}(0)&=-\int_{0}^{t}\int_{\mathbb{R}^{N}}\frac{1}{2}(1+\tanh)\left(\frac{x_{1}+s\psi(x)}{\varepsilon}\right)\partial_{x_{1}}(\psi(x)P(x))\diff x \diff s\\
& \,\ \,\ -t \int_{\mathbb{R}^{N}}\frac{1}{2}(1+\tanh)\left(\frac{x_{1}+t\psi(x)}{\varepsilon}\right)\partial_{x_{1}}\psi(x)P(x)\diff x\\
& \,\ \,\ +\int_{0}^{t}\int_{\mathbb{R}^{N}}\frac{1}{2}(1+\tanh)\left(\frac{x_{1}+s\psi(x)}{\varepsilon}\right)\partial_{x_{1}}\psi(x) P(x)\diff x \diff s.
\end{align*}
Letting $\varepsilon$ tend to $0+$, using Lebesgue's dominated convergence theorem, \eqref{Kforalpha12} and \eqref{equivofnormsoneucl}, yields
\begin{equation}\label{eq_ 34in959ug5n49gn}
\begin{split}
\eta(t)-\eta(0)&=-\int_{0}^{t}\int_{\mathbb{R}^{N}}\mathbbm{1}^{+}(x_{1}+s\psi(x))\partial_{x_{1}}(\psi(x)P(x))\diff x \diff s\\
& \,\ \,\ -t \int_{\mathbb{R}^{N}}\mathbbm{1}^{+}(x_{1}+t\psi(x))\partial_{x_{1}}\psi(x)P(x)\diff x\\
& \,\ \,\ +\int_{0}^{t}\int_{\mathbb{R}^{N}}\mathbbm{1}^{+}(x_{1}+s\psi(x))\partial_{x_{1}}\psi(x) P(x)\diff x \diff s.
\end{split}
\end{equation}
Applying Lebesgue's dominated convergence theorem again, one can see that all the integrals over $\mathbb{R}^{N}$ in \eqref{eq_ 34in959ug5n49gn} are continuous functions of $s$ or $t$. Thus, 
\begin{equation}\label{eq_computationdereta1}
\begin{split}
\eta^{\prime}(0)&=-\int_{\mathbb{R}^{N}}\mathbbm{1}^{+}(x_{1})\partial_{x_{1}}(\psi(x)P(x))\diff x\\
&=-\int_{\mathbb{R}^{N-1}}\int_{0}^{+\infty}\partial_{x_{1}}(\psi(x)P(x))\diff x_{1} \diff x^{\prime}\\
&=\int_{\mathbb{R}^{N-1}}\psi((0,x^{\prime}))P((0,x^{\prime}))\diff x^{\prime},
\end{split}
\end{equation}
where we have used \eqref{Kforalpha12}, \eqref{equivofnormsoneucl}. 
Since $P(x)=P(-x)$ for each $x\in \mathbb{R}^{N}$ (this comes from the fact that $\norm$ is even), we have $\lambda(0)=0$ and hence $\lambda(s)=\lambda^{\prime}(0)s+o(s)=2\eta^{\prime}(0)s+o(s)$. This, together with \eqref{eq_computationdereta1}, completes our proof of Lemma~\ref{lemcomputderivative01}.
\end{proof}

\begin{prop}\label{propconsistency1}
Let $(x_{0}, t_{0})$ and $\varphi \in C^{2}(\mathbb{R}^{N}\times (0, +\infty))$ be as in the proof of Proposition~\ref{prop visc subandsupersol}. Assume that $\alpha \in (1,2)$ and \eqref{eq_finalbeforeproofthisimpliesmcf} holds.  If $|D\varphi(x_{0}, t_{0})|\neq 0$, then \eqref{def ofsubsolwmc new} holds. If $|D\varphi(x_{0}, t_{0})|=0$ and $D^{2}\varphi(x_{0}, t_{0})=0$, then $\partial_{t} \varphi(x_{0}, t_{0})\leq 0$. 
\end{prop}
\begin{proof} We consider the next cases.\\
\noindent\emph{Case 1}: $|D\varphi(x_{0}, t_{0})|\neq 0.$ Setting $\sigma:=\sigma_{\alpha}(h)$, $t_{h}:=n_{h}h$, $\varphi_{h}(y, t):=\varphi(y+x_{h}, t)- \varphi(x_{h}, t_{h})$ and changing the variables (see \eqref{Kforalpha12} and \eqref{eq_definingthethresholding}), in view of \eqref{eq_finalbeforeproofthisimpliesmcf}, \eqref{defofkernelJh} and \eqref{eq_betathresholdalphatime}, we obtain
\begin{equation}\label{eq 001}
-g(x_{h}, t_{h}-h) \sigma^{\frac{1}{\alpha}}\leq \int_{\mathbb{R}^{N}}[\mathbbm{1}^{+}-\mathbbm{1}^{-}](\varphi_{h}(\sigma^{\frac{1}{\alpha}}y, t_{h}-h))P_{\alpha}(y) \diff y.
\end{equation}
Since $\varphi_{h} \in C^{2}(\mathbb{R}^{N}\times (0,+\infty))$, $\varphi_{h}(0, t_{h})=0$ and $\sigma = h^{\frac{\alpha}{2}}$, we expand $\varphi_{h}(\sigma^{\frac{1}{\alpha}}y, t_{h}-h)$ as follows
\begin{align*}
\varphi_{h}(\sigma^{\frac{1}{\alpha}}y, t_{h}-h)
= \sigma^{\frac{1}{\alpha}} \langle D\varphi(x_{h}, t_{h}), y\rangle &+\sigma^{\frac{2}{\alpha}}(-\partial_{t}\varphi(x_{h}, t_{h}) + \frac{1}{2}\langle D^{2}\varphi(x_{h}, t_{h}) y,y \rangle) \\  & \,\ \,\ \,\ + \sigma^{\frac{2}{\alpha}}(O((\sigma^{\frac{2}{\alpha}} + \sigma^{\frac{1}{\alpha}}|y|)(|y|^{2}+1))).
\end{align*}
Denote $p_{h}= D\varphi(x_{h}, t_{h})$, $a_{h}=\partial_{t}\varphi(x_{h}, t_{h})$ and $A_{h}=\frac{1}{2}D^{2}\varphi(x_{h}, t_{h})$. Then 
\[
\varphi_{h}(\sigma^{\frac{1}{\alpha}}y, t_{h}-h)=\sigma^{\frac{1}{\alpha}} \langle p_{h}, y\rangle + \sigma^{\frac{2}{\alpha}}(-a_{h} + \langle A_{h} y,y \rangle+O((\sigma^{\frac{2}{\alpha}}+\sigma^{\frac{1}{\alpha}}|y|)(|y|^{2}+1))).
\]
After a rotation and a change of variables,  we may assume that $p_{h}= \beta_{h} (1, 0,\dotsc, 0)$, where $\beta_{h}=|p_{h}|$. We denote by $\widetilde{A}_{h}$ the matrix we obtain from $A_{h}$ after the rotation. The integration in \eqref{eq 001} is taking place over the sets $C_{h}$ and $C_{h}^{c}$, where
\[
C_{h}=\{y \in \mathbb{R}^{N}: \sigma^{\frac{1}{\alpha}} \beta_{h} y_{1} + \sigma^{\frac{2}{\alpha}}(-a_{h}+ \langle \widetilde{A}_{h} y,y \rangle + O((\sigma^{\frac{2}{\alpha}} + \sigma^{\frac{1}{\alpha}} |y|)(|y|^{2}+1)))\geq 0\}.
\]
Since $\beta_{h} \to \beta_{0}:=|D\varphi(x_{0}, t_{0})|>0$ as $h\to 0+$, $\beta_{h}>0$ for each $h>0$ small enough. Thus,
\[
C_{h}=\{y \in \mathbb{R}^{N}: y_{1}+ \sigma^{\frac{1}{\alpha}} \beta_{h}^{-1}(-a_{h} + \langle \widetilde A_{h} y,y \rangle + O((\sigma^{\frac{2}{\alpha}}+\sigma^{\frac{1}{\alpha}}|y|)(|y|^{2}+1))) \geq 0\}.
\]
Using that $\widetilde{A}_{h}\to \widetilde{A}$ and $a_{h} \to a$ as $h \to 0+$, we get
\[
C_{h}=\{y \in \mathbb{R}^{N}: y_{1} + \sigma^{\frac{1}{\alpha}} \beta_{h}^{-1}(-a+\langle\widetilde{A}y,y\rangle +O((\sigma^{\frac{2}{\alpha}}+\sigma^{\frac{1}{\alpha}}|y|)(|y|^{2}+1))+o(1)(|y|^{2}+1))\geq 0\},
\]
where $\widetilde{A}$ is the rotated matrix $A$ and $a=\partial_{t} \varphi(x_{0}, t_{0})$, namely $\widetilde{A}=EAE^{\mathrm{T}}$ for some $E \in SO(N)$. After all, we deduce the following
\begin{equation}\label{eq_002}
\begin{split}
-g(x_{h}, t_{h}-h)\sigma^{\frac{1}{\alpha}} & \leq \int_{\mathbb{R}^{N}} [\mathbbm{1}^{+}-\mathbbm{1}^{-}](\Psi_{h}(y))P(y)\diff y\\
&=\int_{B_{R}}[\mathbbm{1}^{+}-\mathbbm{1}^{-}](\Psi_{h}(y))P(y)\diff y + \int_{B^{c}_{R}}[\mathbbm{1}^{+}-\mathbbm{1}^{-}](\Psi_{h}(y))P(y)\diff y,
\end{split}
\end{equation}
where for each $y \in \mathbb{R}^{N}$,
\[
\Psi_{h}(y)=y_{1}+\sigma^{\frac{1}{\alpha}}\beta_{h}^{-1}(\langle \widetilde{A} y,y \rangle -a+ O((\sigma^{\frac{2}{\alpha}}+\sigma^{\frac{1}{\alpha}}|y|)(|y|^{2}+1))+o(1)(|y|^{2}+1))
\]
and  $P(y)=P_{\alpha}(E^{\mathrm{T}}y)$. Let $\theta \in (1/\alpha,1)$ and $R=\sigma^{-\frac{\theta}{\alpha}}$. Taking into account \eqref{Kforalpha12} and \eqref{equivofnormsoneucl}, we observe that
\begin{equation}\label{estIIintegral}
\begin{split}
\sigma^{-\frac{1}{\alpha}}\int_{B^{c}_{R}}[\mathbbm{1}^{+}-\mathbbm{1}^{-}](\Psi_{h}(y))P(y)\diff y  \leq
\sigma^{-\frac{1}{\alpha}}\int_{B^{c}_{R}}P(y)\diff y & \leq C^{N+\alpha}\sigma^{-\frac{1}{\alpha}} \int_{B^{c}_{R}} |y|^{-N-\alpha} \diff y \\
& = \frac{C^{N+\alpha}}{\alpha} \omega_{N-1} \sigma^{-\frac{1}{\alpha}} R^{-\alpha}\\
& = \frac{C^{N+\alpha}}{\alpha}\omega_{N-1} \sigma^{\theta-\frac{1}{\alpha}}, 
\end{split} 
\end{equation}
which tends to $0$ as $h \to 0+$ (recall that $\sigma^{\frac{2}{\alpha}}=h$ and $C\geq 1$ is the constant coming from \eqref{equivofnormsoneucl}). 
Fix $\gamma>0$. Then for each $h>0$ small enough and for each $y \in B_{R}$, 
\begin{equation}\label{estimate by barrier CS}
\Psi_{h}(y)\leq \Phi_{h}(y), 
\end{equation}
where
\begin{equation}\label{CS barrier}
\Phi_{h}(y)=y_{1}+\sigma^{\frac{1}{\alpha}}\beta_{h}^{-1}(\langle (\widetilde{A}+\gamma\, \mathrm{Id}) y,y\rangle - a+\gamma),
\end{equation}
since for each $y \in B_{R}$,
\begin{equation}\label{eq 004}
O((\sigma^{\frac{2}{\alpha}}+\sigma^{\frac{1}{\alpha}}|y|)(|y|^{2}+1))+o(1)(|y|^{2}+1)= O(\sigma^{\frac{1-\theta}{\alpha}}(|y|^{2}+1))+o(1)(|y|^{2}+1)\leq \gamma (|y|^{2}+1).
\end{equation}
Inasmuch as $[\mathbbm{1}^{+}-\mathbbm{1}^{-}]$ is nondecreasing, \eqref{estimate by barrier CS} implies that 
\begin{equation}\label{estimate by barrier CS integral}
\int_{B_{R}}[\mathbbm{1}^{+}-\mathbbm{1}^{-}](\Psi_{h}(y))P(y) \diff y \leq \int_{B_{R}}[\mathbbm{1}^{+}-\mathbbm{1}^{-}](\Phi_{h}(y))P(y)\diff y,
\end{equation}
which, together with \eqref{eq_002}, \eqref{estIIintegral} and the facts that \eqref{estIIintegral} holds also with $\Psi_{h}$ replaced by $\Phi_{h}$ and $(x_{h}, t_{h})\to (x_{0}, t_{0})$ as $h \to 0+$, we deduce the following
\begin{equation}\label{eq_kmfo3jf30ji0}
-g(x_{0}, t_{0})\leq \lim_{h\to 0+} \sigma^{-\frac{1}{\alpha}} \int_{\mathbb{R}^{N}}[\mathbbm{1}^{+}-\mathbbm{1}^{-}](\Phi_{h}(y))P(y) \diff y.
\end{equation} 
Applying Lemma~\ref{lemcomputderivative01} with $\psi(y)=\langle (\widetilde{A}+\gamma \mathrm{Id}) y, y \rangle - (a-\gamma)$, $s=\sigma^{\frac{1}{\alpha}}\beta_{h}^{-1}$ and with $P_{\alpha}$ replaced by $P$, yields
\begin{equation}\label{eq_expressionfordiffindicatorint}
\begin{split}
\int_{\mathbb{R}^{N}}[\mathbbm{1}^{+}-\mathbbm{1}^{-}](\Phi_{h}(y))P(y)\diff y&=2\sigma^{\frac{1}{\alpha}}\beta_{h}^{-1}\tr\left(\left(\int_{\mathbb{R}^{N-1}}(0,x^{\prime})\otimes (0,x^{\prime})P((0,x^{\prime}))\diff x^{\prime}\right)(\widetilde{A}+\gamma \mathrm{Id})\right)\\ & \,\ \,\  - 2\sigma^{\frac{1}{\alpha}}\beta_{h}^{-1}(a-\gamma)\int_{\mathbb{R}^{N-1}}P((0,x^{\prime})) \diff x^{\prime} + o(\sigma^{\frac{1}{\alpha}}\beta^{-1}_{h}).
\end{split}
\end{equation}
Recalling that $\beta_{h}=|D\varphi(x_{h}, t_{h})|>0$ for each $h>0$ small enough, $\beta_{h}\to \beta_{0}>0$ and gathering together \eqref{eq_kmfo3jf30ji0} and the above equality, we get
\begin{align*}
-g(x_{0}, t_{0})\beta_{0} & \leq 2\tr\left(\left(\int_{\mathbb{R}^{N-1}}(0, x^{\prime})\otimes (0,x^{\prime}) P((0,x^{\prime}))\diff x^{\prime}\right)(\widetilde{A}+\gamma \mathrm{Id})\right)\\& \,\ \,\ \,\  -2 (a-\gamma)\int_{\mathbb{R}^{N-1}}P((0,x^{\prime})) \diff x^{\prime},
\end{align*}
where $x=(x_{1}, x^{\prime}) \in \mathbb{R}^{N}$. Letting $\gamma$ tend to $0+$, one has
\begin{equation}\label{eq_meancvokm340}
\begin{split}
a &\leq \mu_{\alpha} \left(\tr\left(\left(\int_{\mathbb{R}^{N-1}}(0,x^{\prime}) \otimes (0, x^{\prime}) P((0, x^{\prime})) \diff x^{\prime} \right)2\widetilde{A}\right)+g(x_{0}, t_{0}) \beta_{0}\right)\\&=:\mu_{\alpha}\eta+\mu_{\alpha}g(x_{0}, t_{0})\beta_{0},
\end{split}
\end{equation}
where $\mu_{\alpha}=(2\int_{\mathbb{R}^{N-1}}P((0,x^{\prime}))\diff x^{\prime})^{-1} \in (0,+\infty)$. 

Since $E^{\mathrm{T}}(1,0^{\prime})=\frac{D\varphi(x_{0}, t_{0})}{|D\varphi(x_{0}, t_{0})|}=:e$ and $E \in SO(N)$, changing the variables and recalling that $\widetilde{A}=EAE^{\mathrm{T}}$, $P(\cdot)=P_{\alpha}(E^{\mathrm{T}}\cdot)$, we obtain
\begin{equation*}\label{eq_1computtraceorhogonal}
\begin{split}
\eta&=\tr\left(\left(\int_{\mathbb{R}^{N-1}}E^{\mathrm{T}}(0,x^{\prime})\otimes E^{\mathrm{T}}(0, x^{\prime}) P_{\alpha}(E^{\mathrm{T}}(0,x^{\prime}))\diff x^{\prime}\right)2A\right)\\
&=\tr\left(\left(\int_{e^{\perp}}x \otimes x\, P_{\alpha}(x) \diff \mathcal{H}^{N-1}(x)\right)2A\right), 
\end{split}
\end{equation*}
where $A=\frac{1}{2}D^{2}\varphi(x_{0},t_{0})$. Using \eqref{Kforalpha12}, \cite[Theorem~3.2.22~(3)]{Federer} and changing the variables (namely, $t=r\norm(\theta)$), one has  
\begin{align*}
\int_{e^{\perp}}x \otimes x\, P_{\alpha}(x) \diff \mathcal{H}^{N-1}(x)&=\int_{0}^{+\infty} \frac{t^{N}}{1+t^{N+\alpha}} \diff t \int_{\mathbb{S}^{N-1} \cap e^{\perp}} \theta \otimes \theta \frac{\diff \mathcal{H}^{N-2}(\theta)}{\norm(\theta)^{N+1}}\\
&=C_{N, \alpha}\int_{\mathbb{S}^{N-1} \cap e^{\perp}} \theta \otimes \theta \frac{\diff \mathcal{H}^{N-2}(\theta)}{\norm(\theta)^{N+1}}.
\end{align*}
Thus, \eqref{eq_meancvokm340} yields the desired inequality \eqref{def ofsubsolwmc new} for $\varphi$ at $(x_{0}, t_{0})$.
\\

Next, we assume that $|D\varphi(x_{0}, t_{0})|=0$, $D^{2}\varphi(x_{0}, t_{0})=0$ and $\beta_{h} \to 0$ as $h\to 0+$. We need to distinguish between three further cases.\\
\noindent \emph{Case 2.1}: along some subsequence $\beta_{h}\neq 0$ and $\sigma^{\frac{1}{\alpha}}\beta^{-1}_{h}\to 0$. Then, in view of \eqref{eq_002},  \eqref{estIIintegral}, \eqref{estimate by barrier CS integral} and the fact that \eqref{estIIintegral} holds also with $\Psi_{h}$ replaced by $\Phi_{h}$, we have 
\[
0=\lim_{h\to 0+}-g(x_{h}, t_{h}-h)\beta_{h}\leq \lim_{h\to 0+} \sigma^{-\frac{1}{\alpha}}\beta_{h} \int_{\mathbb{R}^{N}}[\mathbbm{1}^{+}-\mathbbm{1}^{-}](\Phi_{h}(y))P(y) \diff y,
\]
which, together with \eqref{eq_expressionfordiffindicatorint} and the fact that $D^{2}\varphi(x_{0}, t_{0})=0$, yields that $\partial_{t} \varphi(x_{0}, t_{0}) \leq 0$.\\
\noindent \emph{Case 2.2}: along some subsequence $\beta_{h}=0$ or $\sigma^{\frac{1}{\alpha}}\beta_{h}^{-1}\to +\infty$. Assume that $a_{h}\to\partial_{t} \varphi(x_{0}, t_{0})>0$. Then the characteristic function of the set
\[
\{y \in \mathbb{R}^{N}: \beta_{h} y_{1} + \sigma^{\frac{1}{\alpha}}(-a_{h}+ \langle \widetilde{A}_{h} y,y \rangle + O((\sigma^{\frac{2}{\alpha}} + \sigma^{\frac{1}{\alpha}} |y|)(|y|^{2}+1)))\geq 0\},
\]
which is the same as the set
\[
\{y\in \mathbb{R}^{N}: \sigma^{-\frac{1}{\alpha}} \beta_{h} y_{1} -a_{h}+ \langle \widetilde{A}_{h} y,y \rangle + O((\sigma^{\frac{2}{\alpha}} + \sigma^{\frac{1}{\alpha}} |y|)(|y|^{2}+1))\geq 0\},
\]
pointwise converges to the constant function 0. Using this, \eqref{eq 001} and Lebesgue's dominated convergence theorem, we obtain
\[
\int_{\mathbb{R}^{N}} P_{\alpha}(y) \diff y \leq 2\int_{\mathbb{R}^{N}}\mathbbm{1}^{+}(\varphi_{h}(\sigma^{\frac{1}{\alpha}}y, t_{h}-h))P_{\alpha}(y)\diff y +g(x_{h}, t_{h}-h)\sigma^{\frac{1}{\alpha}} \to 0
\]
as $h\to 0+$, which leads to a contradiction with the fact that $\int_{\mathbb{R}^{N}}P_{\alpha}(y)\diff y>0$. \\
\noindent \emph{Case 2.3}: along some subsequence $\sigma^{\frac{1}{\alpha}}\beta_{h}^{-1}\to \beta>0$. Then the characteristic function of the set
\[
\{y \in \mathbb{R}^{N}: \beta_{h} y_{1} + \sigma^{\frac{1}{\alpha}}(-a_{h}+ \langle \widetilde{A}_{h} y,y \rangle + O((\sigma^{\frac{2}{\alpha}} + \sigma^{\frac{1}{\alpha}} |y|)(|y|^{2}+1)))\geq 0\},
\]
which is the same as the set
\[
\{y \in \mathbb{R}^{N}: y_{1} + \sigma^{\frac{1}{\alpha}}\beta_{h}^{-1}(-a_{h}+ \langle \widetilde{A}_{h} y,y \rangle + O((\sigma^{\frac{2}{\alpha}} + \sigma^{\frac{1}{\alpha}} |y|)(|y|^{2}+1)))\geq 0\},
\]
pointwise converges to the characteristic function of the set 
\[
\{y \in \mathbb{R}^{N}: y_{1}-\beta \partial_{t}\varphi(x_{0}, t_{0}) \geq 0\}.
\]
But we know that 
\[
\int_{\mathbb{R}^{N}}P_{\alpha}(y)\diff y - 2\int_{\mathbb{R}^{N}}\mathbbm{1}^{+}(\varphi_{h}(\sigma^{\frac{1}{\alpha}}y, t_{h}-h)) P_{\alpha}(y) \diff y \leq g(x_{h}, t_{h}-h) \sigma^{\frac{1}{\alpha}}
\]
(see \eqref{eq 001}). Thus, letting $h\to 0+$ and applying Lebesgue's dominated convergence theorem, we obtain
\[
\int_{\mathbb{R}^{N}}P_{\alpha}(y)\diff y - 2\int_{\mathbb{R}^{N}}\mathbbm{1}^{+}(y_{1}-\beta \partial_{t} \varphi(x_{0}, t_{0})) P_{\alpha}(y) \diff y \leq 0.
\]
Therefore,
\[
\partial_{t} \varphi(x_{0}, t_{0}) \leq 0.
\]
This completes our proof of Proposition~\ref{propconsistency1}.
\end{proof}
\begin{prop}\label{propconsistency2}
Let $(x_{0}, t_{0})$ and $\varphi \in C^{2}(\mathbb{R}^{N}\times (0, +\infty))$ be as in the proof of Proposition~\ref{prop visc subandsupersol}. Assume that $\alpha=1$ and \eqref{eq_finalbeforeproofthisimpliesmcf} holds.  If $|D\varphi(x_{0}, t_{0})|\neq 0$, then \eqref{def ofsubsolwmc new} holds with $\alpha=1$. If $|D\varphi(x_{0}, t_{0})|=0$ and $D^{2}\varphi(x_{0}, t_{0})=0$, then $\partial_{t} \varphi(x_{0}, t_{0})\leq 0$. 
\end{prop}
\begin{proof} We consider the next cases.\\
\noindent \emph{Case 1}: $|D\varphi(x_{0}, t_{0})|\neq 0$. Setting $\sigma:=\sigma_{1}(h)$, $t_{h}:=n_{h}h$, $\varphi_{h}(y, t):=\varphi(y+x_{h}, t)- \varphi(x_{h}, t_{h})$ and changing the variables (see \eqref{Kforalpha12} and \eqref{eq_definingthethresholding}), in view of \eqref{eq_finalbeforeproofthisimpliesmcf}, \eqref{defofkernelJh} and \eqref{eq_betathresholdalphatime}, we obtain
\begin{equation}\label{eq 001caseaplha1}
-g(x_{h}, t_{h}-h)\sigma|\ln(\sigma)|\leq \int_{\mathbb{R}^{N}}[\mathbbm{1}^{+}-\mathbbm{1}^{-}](\varphi_{h}(\sigma y, t_{h}-h))P_{1}(y) \diff y.
\end{equation}
Taking into account that $\varphi_{h} \in C^{2}(\mathbb{R}^{N}\times (0,+\infty))$, $\varphi_{h}(0, t_{h})=0$ and $h=\sigma^{2}|\ln(\sigma)|$, using Taylor's formula, we have
\[
\varphi_{h}(\sigma y, t_{h}-h)=\sigma \langle p_{h},  y\rangle+ \sigma(-\sigma|\ln(\sigma)|a_{h} + \sigma\langle A_{h}y, y\rangle + O((\sigma^{2}|\ln(\sigma)|+\sigma^{2}|y|^{2})(|y|+\sigma|\ln(\sigma)|))),
\]
where $p_{h}= D\varphi(x_{h}, t_{h})$, $a_{h}=\partial_{t}\varphi(x_{h}, t_{h})$ and $A_{h}=\frac{1}{2}D^{2}\varphi(x_{h}, t_{h})$. 
After a rotation and a change of variables,  we may assume that $p_{h}= \beta_{h} (1, 0,\dotsc, 0)$, where $\beta_{h}=|p_{h}|$. We denote by $\widetilde{A}_{h}$ the matrix we obtain from $A_{h}$ after the rotation. The integration in \eqref{eq 001caseaplha1} is taking place over the sets $C_{h}$ and $C_{h}^{c}$, where 
\[
C_{h}=\{y \in \mathbb{R}^{N}: \sigma \beta_{h} y_{1} + \sigma(-\sigma|\ln(\sigma)|a_{h}+ \sigma \langle \widetilde{A}_{h} y,y \rangle + O((\sigma^{2}|\ln(\sigma)|+\sigma^{2}|y|^{2})(|y|+\sigma|\ln(\sigma)|)))\geq 0\}.
\]
Since $\beta_{h} \to |D\varphi(x_{0}, t_{0})|>0$ as $h\to 0+$, $\beta_{h}>0$ for each $h>0$ small enough. Thus,
\[
C_{h}=\{y \in \mathbb{R}^{N}: y_{1}+ \beta_{h}^{-1}(-\sigma|\ln(\sigma)|a_{h}+ \sigma \langle \widetilde{A}_{h} y,y \rangle + O((\sigma^{2}|\ln(\sigma)|+\sigma^{2}|y|^{2})(|y|+\sigma|\ln(\sigma)|))) \geq 0\}.
\]
Using that $\widetilde{A}_{h}\to \widetilde{A}$ and $a_{h} \to a$ as $h \to 0+$, we deduce that $C_{h}$ consists of points $y \in \mathbb{R}^{N}$ such that
\begin{align*}
0&\leq y_{1} +  \beta_{h}^{-1}(-\sigma|\ln(\sigma)|a+ \sigma \langle \widetilde{A} y,y \rangle+o(1)(\sigma(|y|^{2}+|\ln(\sigma)|)))\\ & \,\ \,\ \,\ +\beta_{h}^{-1}O((\sigma^{2}|\ln(\sigma)|+\sigma^{2}|y|^{2})(|y|+\sigma|\ln(\sigma)|)),
\end{align*}
where $\widetilde{A}$ is the rotated matrix $A$ and $a=\partial_{t} \varphi(x_{0}, t_{0})$, namely $\widetilde{A}=EAE^{\mathrm{T}}$ for some $E \in SO(N)$. For each $y \in \mathbb{R}^{N}$ and for each $\gamma>0$, define $P(y)=P_{1}(E^{\mathrm{T}}y)$, 
\begin{align*}
\Psi_{h}(y) &= y_{1} +  \beta_{h}^{-1}(-\sigma|\ln(\sigma)|a+ \sigma \langle \widetilde{A} y,y \rangle+o(1)(\sigma(|y|^{2}+|\ln(\sigma)|)))\\ & \,\ \,\ \,\ +\beta_{h}^{-1}O((\sigma^{2}|\ln(\sigma)|+\sigma^{2}|y|^{2})(|y|+\sigma|\ln(\sigma)|))
\end{align*}
and
\[
\Phi_{h}(y)=y_{1}+\beta_{h}^{-1}(-\sigma|\ln(\sigma)|(a-\gamma) + \sigma \langle (\widetilde{A}+\gamma \mathrm{Id})y, y \rangle).
\]
Then, summing up the above considerations, in view of \eqref{eq 001caseaplha1}, for each $h>0$ small enough, we obtain
\begin{equation}\label{eq_estgfirstfunction1777907890}
-g(x_{h}, t_{h}-h)\sigma|\ln(\sigma)|\leq \int_{\mathbb{R}^{N}}[\mathbbm{1}^{+}-\mathbbm{1}^{-}](\Psi_{h}(y))P(y) \diff y.
\end{equation}
For each $\eta>0$, 
\begin{equation}\label{eq_uniformestimkernelplays1}
\int_{\mathbb{R}^{N}}[\mathbbm{1}^{+}-\mathbbm{1}^{-}](\Psi_{h}(y))P(y) \diff y \leq \int_{B_{\eta \sigma^{-1}}}[\mathbbm{1}^{+}-\mathbbm{1}^{-}](\Psi_{h}(y))P(y) \diff y+ \int_{B^{c}_{\eta \sigma^{-1}}}P(y) \diff y  
\end{equation}
and 
\begin{equation}\label{eq_estimoutofballcasealpha1}
 \int_{B^{c}_{\eta \sigma^{-1}}}P(y) \diff y  \leq C^{N+1}\int_{B^{c}_{\eta \sigma^{-1}}}\frac{\diff y}{|y|^{N+1}}=C^{N+1}\omega_{N-1} \eta^{-1}\sigma,
\end{equation}
where $C=C(\norm)\geq 1$ is the constant coming from \eqref{equivofnormsoneucl} and we have used \eqref{Kforalpha12}. Since $\Psi_{h} \leq \Phi_{h}$ in $B_{\eta \sigma^{-1}}$ (provided that $h>0$ is small enough and $\eta$ is sufficiently small with respect to $\gamma$) and  the function $[\mathbbm{1}^{+}-\mathbbm{1}^{-}]$ is nondecreasing,  \eqref{eq_estgfirstfunction1777907890}-\eqref{eq_estimoutofballcasealpha1} yield
\begin{equation*}
-g(x_{h}, t_{h}-h)\sigma|\ln(\sigma)|\leq \int_{B_{\eta \sigma^{-1}}}[\mathbbm{1}^{+}-\mathbbm{1}^{-}](\Phi_{h}(y))P(y) \diff y+C^{N+1}\omega_{N-1} \eta^{-1}\sigma.
\end{equation*}
Thus, we reach the inequality
\begin{equation}\label{eq_19i33i323ii20i30i09}
-g(x_{h}, t_{h}-h)\varrho \leq f(\varrho) +C^{N+1}\omega_{N-1} \eta^{-1}\sigma,
\end{equation}
where $\varrho=\sigma |\ln(\sigma)|$ and
\[
f(\varrho)=\int_{B_{\eta \sigma^{-1}}}[\mathbbm{1}^{+}-\mathbbm{1}^{-}](y_{1}+F(y, \varrho))P(y) \diff y,
\]
where $F(y, \varrho)= \beta_{h}^{-1} (\sigma \langle (\widetilde{A}+\gamma \mathrm{Id})y, y \rangle- \varrho (a-\gamma))$. Using \eqref{Kforalpha12} and the Lebesgue dominated convergence theorem, we get $f(0)=\int_{\mathbb{R}^{N}}[\mathbbm{1}^{+}-\mathbbm{1}^{-}](y_{1})P(y) \diff y=0$. We need to compute $f^{\prime}(0)$. Arguing as in the proof of Lemma~\ref{lemcomputderivative01}, namely, using a smooth approximation of $\mathbbm{1}^{+}$, we deduce that
\begin{align*}
f^{\prime}(\varrho)&=2 \int_{B_{\eta \sigma^{-1}}}\delta (y_{1}+F(y, \varrho)) \partial_{\varrho}F(y, \varrho) P(y) \diff y\\ & \,\ \,\ \,\  + \int_{\partial B_{\eta \sigma^{-1}}}[\mathbbm{1}^{+}-\mathbbm{1}^{-}](y_{1}+F(y, \varrho)) P(y)(\eta \sigma^{-1})^{\prime} \diff \mathcal{H}^{N-1}(y),
\end{align*}
where $\delta$ is the Dirac delta function and $(\eta \sigma^{-1})^{\prime}$ denotes the derivative of $\eta \sigma^{-1}$ with respect to $\varrho$. Then
\begin{equation}\label{eq_dofftwoint1}
f(\varrho) - f(0)=\mathrm{I}_{\varrho}+\mathrm{II}_{\varrho},
\end{equation}
where 
\[
\mathrm{I}_{\varrho}=2 \int_{0}^{\varrho}\int_{B_{\eta \sigma^{-1}(s)}}\delta(y_{1}+F(y,s))\partial_{s}F(y,s)P(y) \diff y \diff s
\]
and 
\begin{align*}
\mathrm{II}_{\varrho}&=\int_{0}^{\varrho}\int_{\partial B_{\eta \sigma^{-1}(s)}}[\mathbbm{1}^{+}-\mathbbm{1}^{-}](y_{1}+F(y,s))P(y) (\eta \sigma^{-1})^{\prime} \diff \mathcal{H}^{N-1}(y) \diff s\\& \leq \int_{0}^{\varrho}\int_{\mathbb{S}^{N-1}}P(\eta \sigma^{-1} (s) y) (\eta \sigma^{-1}(s))^{\prime}(\eta \sigma^{-1}(s))^{N-1} \diff \mathcal{H}^{N-1}(y) \diff s,
\end{align*}
where $s=\sigma(s)|\ln(\sigma(s))|$. 

Using \eqref{Kforalpha12} and the facts that $E\in SO(N)$ and $\sigma^{\prime}(s)=(|\ln(\sigma(s))|-1)^{-1}$, we obtain
\begin{equation}\label{eq_secondintegralrhotendsto0}
\begin{split}
\varrho^{-1}\mathrm{II}_{\varrho} & \leq C^{N+1}\omega_{N-1} \varrho^{-1} \int_{0}^{\varrho}\frac{(\eta \sigma^{-1}(s))^{N-1}}{C^{N+1}+(\eta \sigma^{-1}(s))^{N+1}}\left(\frac{\eta \sigma^{\prime}(s)}{\sigma^{2}(s)}\right) \diff s\\
& \leq C^{N+1}\omega_{N-1} (\eta\varrho)^{-1} \int_{0}^{\varrho} (|\ln(\sigma(s))|-1)^{-1} \diff s \to 0
\end{split}
\end{equation}
as $\varrho \to 0+$. Thus, $\varrho^{-1} \mathrm{II}_{\varrho} \to 0$ as $\varrho \to 0+$. 

Next, we analyze $\varrho^{-1} \mathrm{I}_{\varrho}$. Inasmuch as $\partial_{s}F(y, s)=\beta_{h}^{-1} (\langle (\widetilde{A}+\gamma \mathrm{Id})y, y\rangle \sigma^{\prime}-(a-\gamma))$, we have $\mathrm{I}_{\varrho}=\mathrm{I}^{1}_{\varrho}+\mathrm{I}^{2}_{\varrho}$, where
\[
\mathrm{I}^{1}_{\varrho}=-2\beta_{h}^{-1}(a-\gamma)\int_{0}^{\varrho}\int_{B_{\eta \sigma(s)^{-1}}}\delta(y_{1}+F(y,s))P(y) \diff y\diff s
\]
and
\[
\mathrm{I}^{2}_{\varrho}=2 \beta_{h}^{-1} \int_{0}^{\varrho}(|\ln(\sigma(s))|-1)^{-1}\int_{B_{\eta \sigma(s)^{-1}}}\delta(y_{1}+F(y,s))\langle (\widetilde{A}+\gamma \mathrm{Id}) y, y \rangle P(y) \diff y \diff s.
\]
Using the approximation of $\delta$ through $\frac{1}{2}(1-\tanh^{2})(\frac{\cdot}{\varepsilon})$, the properties of $P(\cdot)=P_{1}(E^{\mathrm{T}}\cdot)$ (see \eqref{Kforalpha12})  and changing the variables, one has
\begin{equation}\label{eq_estimlimitfirstintegralformulaexplicit}
\begin{split}
\lim_{\varrho\to 0+}\varrho^{-1}\mathrm{I}^{1}_{\varrho}& =-2(a-\gamma)\beta^{-1}_{0}\int_{\mathbb{R}^{N}}\delta(y_{1})P(y) \diff y =-2(a-\gamma)\beta^{-1}_{0}\int_{\mathbb{R}^{N-1}}P(0, y^{\prime}) \diff y^{\prime}\\ &= -2(a-\gamma)\beta^{-1}_{0}\int_{e^{\perp}}P_{1}(x) \diff \mathcal{H}^{N-1}(x),
\end{split}
\end{equation}
where $e=\frac{D\varphi(x_{0}, t_{0})}{|D\varphi(x_{0}, t_{0})|}$. On the other hand, using the properties of $P$ and changing the variables, we deduce the following chain of estimates
\begin{equation*}\label{eq_estimsecondintegralformulaexplicit}
\begin{split}
\lim_{\varrho\to 0+}\varrho^{-1} \mathrm{I}^{2}_{\varrho}&=\lim_{\varrho \to 0+}\frac{2}{\beta_{h}\varrho} \int_{0}^{\varrho}\frac{\ln(\eta \sigma^{-1})}{|\ln(\sigma)|-1} \frac{1}{\ln(\eta \sigma^{-1})}\int_{B_{\eta \sigma^{-1}}}\delta(y_{1}+F(y,s))\langle (\widetilde{A}+\gamma \mathrm{Id}) y, y \rangle P(y)\, d y\, d s\\
&=\lim_{R\to +\infty} \frac{2}{\beta_{0}\ln(R)} \int_{B_{R}}\delta(y_{1})\langle (\widetilde{A}+\gamma \mathrm{Id}) y, y \rangle P(y) \diff y \\
&=\lim_{R\to +\infty}\frac{2}{\beta_{0} \ln(R)}\int_{\mathbb{R}^{N-1}}\int_{0}^{+\infty}-\partial_{y_{1}}(\langle(\widetilde{A}+\gamma \mathrm{Id}) y, y\rangle P(y) \mathbbm{1}_{B_{R}}(y)) \diff y_{1} \diff y^{\prime}\\
&=\lim_{R\to +\infty}\frac{2}{\beta_{0} \ln(R)}\int_{|y^{\prime}|<R}\langle(\widetilde{A}+\gamma \mathrm{Id}) (0,y^{\prime}), (0,y^{\prime}) \rangle  P((0,y^{\prime})) \diff y^{\prime}\\
&=\lim_{R\to +\infty}\frac{2}{\beta_{0} \ln(R)}\int_{0}^{R}\int_{\mathbb{S}^{N-1} \cap \{0\}\times \mathbb{R}^{N-1}} \frac{r^{N}\langle(\widetilde{A}+\gamma \mathrm{Id}) (0,\theta), (0,\theta) \rangle \diff \mathcal{H}^{N-2}(\theta) }{1+r^{N+1}\norm(E^{\mathrm{T}}(0, \theta))^{N+1}}\diff r\\
&=\lim_{R\to +\infty}\frac{2}{\beta_{0} \ln(R)}\int_{0}^{R}\int_{\mathbb{S}^{N-1} \cap e^{\perp}} \frac{t^{N}}{1+t^{N+1}} \langle(A+\gamma \mathrm{Id}) \theta, \theta \rangle\frac{\diff \mathcal{H}^{N-2}(\theta)}{\norm(\theta)^{N+1}} \diff t\\
& = \frac{1}{\beta_{0}}\tr\left(\left(\int_{\mathbb{S}^{N-1} \cap e^{\perp}} \theta \otimes \theta \frac{\diff \mathcal{H}^{N-2}(\theta)}{\norm(\theta)^{N+1}}\right)2(A+\gamma \mathrm{Id})\right),
\end{split}
\end{equation*}
where $A=\frac{1}{2}D^{2}\varphi(x_{0}, t_{0})$. Using this, together with the fact that $f(0)=0$ and \eqref{eq_dofftwoint1}-\eqref{eq_estimlimitfirstintegralformulaexplicit}, in view of \eqref{eq_19i33i323ii20i30i09}, one has
\begin{align*}
-g(x_{0}, t_{0}) &\leq \lim_{\varrho \to 0+} \frac{f(\varrho)- f(0)}{\varrho}+ C^{N+1}\omega_{N-1}\frac{1}{\eta |\ln(\sigma(\varrho))|} = f^{\prime}(0) \\
& = -\frac{2(a-\gamma)}{\beta_{0}}\int_{e^{\perp}}P_{1}(x) \diff \mathcal{H}^{N-1}(x)+ \frac{1}{\beta_{0}}\tr\Biggl(\Biggl(\int_{\mathbb{S}^{N-1} \cap e^{\perp}} \theta \otimes \theta \frac{\diff \mathcal{H}^{N-2}(\theta)}{\norm(\theta)^{N+1}}\Biggr)2(A+\gamma \mathrm{Id})\Biggr).
\end{align*}
Letting $\gamma \to 0+$, we obtain
\[
\partial_{t} \varphi(x_{0}, t_{0}) \leq \mu_{1}(D\varphi(x_{0}, t_{0}))(F_{1}(D^{2}\varphi(x_{0}, t_{0}), D\varphi(x_{0}, t_{0}))+g(x_{0}, t_{0})|D\varphi(x_{0}, t_{0})|),
\]
which is \eqref{def ofsubsolwmc new} with $\alpha=1$.

Next, we assume that $|D\varphi(x_{0}, t_{0})|=0$, $D^{2}\varphi(x_{0}, t_{0})=0$ and $\beta_{h} \to 0$ as $h\to 0+$. We need to distinguish between three further cases. \\
\noindent \emph{Case 2.1}: along some subsequence $\beta_{h}\neq 0$ and $\sigma|\ln(\sigma)|\beta^{-1}_{h}\to 0$. Then, in view of \eqref{eq_19i33i323ii20i30i09} and the fact that $D^{2}\varphi(x_{0}, t_{0})=0$, we have 
\begin{align*}
0&=\lim_{h\to 0+}-g(x_{h}, t_{h}-h)\beta_{h}\leq \lim_{h \to 0+} \beta_{h} \frac{f(\sigma|\ln(\sigma)|)- f(0)}{\sigma|\ln(\sigma)|}=\lim_{\varrho\to 0+}\varrho^{-1}\beta_{h}\mathrm{I}^{1}_{\varrho} +\lim_{\varrho\to 0+}\varrho^{-1} \beta_{h} \mathrm{I}^{2}_{\varrho}\\
& = -2(\partial_{t} \varphi(x_{0}, t_{0})-\gamma)\int_{e^{\perp}}P_{1}(x) \diff \mathcal{H}^{N-1}(x)+ \tr\left(\left(\int_{\mathbb{S}^{N-1} \cap e^{\perp}} \theta \otimes \theta \frac{\diff \mathcal{H}^{N-2}(\theta)}{\norm(\theta)^{N+1}}\right)2(A+\gamma \mathrm{Id})\right).
\end{align*}
Since $A=0$, letting $\gamma \to 0+$, one has $\partial_{t} \varphi(x_{0}, t_{0}) \leq 0$.
\\
\noindent \emph{Case 2.2}: along some subsequence $\beta_{h}=0$ or $\sigma |\ln(\sigma)|\beta_{h}^{-1}\to +\infty$ as $h\to 0+$. Assume that $a_{h}\to\partial_{t} \varphi(x_{0}, t_{0})>0$. Then the characteristic function of the set
\[
\{y \in \mathbb{R}^{N}: \beta_{h} y_{1} -\sigma|\ln(\sigma)|a_{h}+ \sigma \langle \widetilde{A}_{h} y,y \rangle + O((\sigma^{2}|\ln(\sigma)|+\sigma^{2}|y|^{2})(|y|+\sigma|\ln(\sigma)|)) \geq 0\},
\]
which is the same as the set
\[
\{y \in \mathbb{R}^{N}: (\sigma |\ln(\sigma)|)^{-1}\beta_{h} y_{1} -a_{h}+ |\ln(\sigma)|^{-1}\langle \widetilde{A}_{h} y,y \rangle + O((\sigma + \sigma |\ln(\sigma)|^{-1} |y|^{2})(|y|+\sigma|\ln(\sigma)|))\geq 0\},
\]
pointwise converges to the constant function 0. Using this, \eqref{eq 001caseaplha1} and Lebesgue's dominated convergence theorem, we obtain
\[
\int_{\mathbb{R}^{N}} P_{1}(y) \diff y \leq 2\int_{\mathbb{R}^{N}}\mathbbm{1}^{+}(\varphi_{h}(\sigma y, t_{h}-h))P_{1}(y)\diff y +g(x_{h}, t_{h}-h)\sigma |\ln(\sigma)| \to 0
\]
as $h\to 0+$, which leads to a contradiction with the fact that $\int_{\mathbb{R}^{N}}P_{1}(y)\diff y>0$. \\
\noindent \emph{Case 2.3}: along some subsequence $\sigma |\ln(\sigma)|\beta_{h}^{-1}\to \beta>0$. Then the characteristic function of the set
\[
\{y \in \mathbb{R}^{N}: y_{1} + \beta_{h}^{-1}(-\sigma|\ln(\sigma)|a_{h}+ \sigma \langle \widetilde{A}_{h} y,y \rangle + O((\sigma^{2}|\ln(\sigma)|+\sigma^{2}|y|^{2})(|y|+\sigma|\ln(\sigma)|))) \geq 0\}
\]
pointwise converges to the characteristic function of the set 
\[
\{y \in \mathbb{R}^{N}: y_{1}-\beta \partial_{t}\varphi(x_{0}, t_{0}) \geq 0\}.
\]
To obtain a contradiction, we conclude as in the proof of Proposition~\ref{propconsistency1}. This completes our proof of Proposition~\ref{propconsistency2}.
\end{proof}
\subsection{Convexity of the mobility}
In the previous section we established the convergence of the anisotropic scheme of the Bence-Merriman-Osher type  to a viscosity solution of the equation, which can be written in the form 
\[
\partial_{t} u= \Phi_{\alpha}(Du)\left(\frac{1}{|Du|}F_{\alpha}(D^{2}u, Du)+g\right),
\]
where $\Phi_{\alpha}(p)$ is a 1-homogeneous function equal to $\mu_{\alpha}(p)|p|$. Moreover, it turns out that $\Phi_{\alpha}$ is a convex even 1-homogeneous function and, hence, a norm.
\begin{lemma}\label{lem convexitymobility}
For each $\alpha \in (0,2)$, the $1$-homogeneous function
\[
\Phi_{\alpha}(p)=\left(2\int_{p^{\perp}}\frac{\diff \mathcal{H}^{N-1}(x)}{1+\norm(x)^{N+\alpha}}\right)^{-1}|p|
\]
is convex in $\mathbb{R}^{N}$.
\end{lemma}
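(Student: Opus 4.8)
The plan is to reduce the statement to Busemann's classical theorem on the convexity of intersection bodies of origin-symmetric convex bodies. Write $W=\{x\in\mathbb{R}^{N}:\norm(x)\le 1\}$ for the unit ball of $\norm$; by Definition~\ref{def of norm} and \eqref{equivofnormsoneucl}, the set $W$ is an origin-symmetric, compact, convex body with $0$ in its interior. The first and only computational step is a change of variables identifying the integral in the definition of $\Phi_{\alpha}$, up to a constant depending only on $N$ and $\alpha$, with the $(N-1)$-dimensional volume of the central section $W\cap p^{\perp}$.

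To carry this out I would pass to polar coordinates in the hyperplane $p^{\perp}$: writing $x=r\theta$ with $r>0$ and $\theta\in\mathbb{S}^{N-1}\cap p^{\perp}$, so that $d\mathcal{H}^{N-1}(x)=r^{N-2}\,dr\,d\mathcal{H}^{N-2}(\theta)$, and then substituting $u=r\,\norm(\theta)$ in the inner radial integral, one obtains
\begin{equation*}
\int_{p^{\perp}}\frac{d\mathcal{H}^{N-1}(x)}{1+\norm(x)^{N+\alpha}}=c_{N,\alpha}\int_{\mathbb{S}^{N-1}\cap p^{\perp}}\frac{d\mathcal{H}^{N-2}(\theta)}{\norm(\theta)^{N-1}},\qquad c_{N,\alpha}:=\int_{0}^{+\infty}\frac{u^{N-2}}{1+u^{N+\alpha}}\,du,
\end{equation*}
the integral $c_{N,\alpha}$ being finite and positive since $N\ge 2$ and $\alpha\in(0,2)$. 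The same polar coordinates applied to $W\cap p^{\perp}=\{r\theta:\theta\in\mathbb{S}^{N-1}\cap p^{\perp},\ 0\le r\le\norm(\theta)^{-1}\}$ give $\mathcal{H}^{N-1}(W\cap p^{\perp})=\tfrac{1}{N-1}\int_{\mathbb{S}^{N-1}\cap p^{\perp}}\norm(\theta)^{-(N-1)}\,d\mathcal{H}^{N-2}(\theta)$, and combining the two identities yields
\begin{equation*}
\Phi_{\alpha}(p)=\frac{1}{2(N-1)\,c_{N,\alpha}}\cdot\frac{|p|}{\mathcal{H}^{N-1}(W\cap p^{\perp})}\qquad\text{for }p\in\mathbb{R}^{N}\setminus\{0\}.
\end{equation*}
Thus $\Phi_{\alpha}$ is, up to a positive multiplicative constant, the Minkowski gauge of the intersection body of $W$, i.e. of the star body whose radial function in the direction $u\in\mathbb{S}^{N-1}$ is $\mathcal{H}^{N-1}(W\cap u^{\perp})$; the boundedness of $W$ and the fact that it contains a ball centered at $0$ ensure that this gauge is finite and positive away from the origin.

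It then remains to invoke Busemann's theorem: the intersection body of an origin-symmetric convex body is again an origin-symmetric convex body, so its gauge is a norm, in particular a convex, even, positively $1$-homogeneous function on $\mathbb{R}^{N}$. Multiplying by the positive constant above shows that $\Phi_{\alpha}$ has the same properties, which proves the lemma. The only genuine content is Busemann's theorem, and the main obstacle — should one prefer a self-contained argument — is to reprove it: this can be done by restricting $\Phi_{\alpha}$ to an arbitrary two-dimensional subspace and comparing the areas of parallel sections of $W$ via a Brunn–Minkowski-type inequality, but it is cleaner to cite it. Everything else reduces to the elementary change of variables above.
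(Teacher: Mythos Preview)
Your argument is correct and is essentially the standard route: the polar-coordinates computation you carry out is exactly the one the paper records in Remark~\ref{rem explicitmobil1234990495}, and the identification of $\Phi_{\alpha}$ with a constant multiple of the gauge of the intersection body of $\{\norm\le 1\}$, followed by Busemann's theorem, is precisely the argument behind \cite[Lemma~3.7]{Chambolle_Novaga_Ruffini_2017} to which the paper defers. In short, you have written out the proof the paper only cites.
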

\begin{proof}
For a proof for the case where $\alpha \in (0,1)$, we refer to \cite[Lemma~3.7]{Chambolle_Novaga_Ruffini_2017}. The proof for the case where $\alpha \in [1,2)$ is similar.
\end{proof}
\begin{rem}\label{rem explicitmobil1234990495}
Let $\alpha \in (0,2)$ and $p \in \mathbb{R}^{N}\setminus \{0\}$. Then
\begin{align*}
\int_{p^{\perp}}\frac{\diff \mathcal{H}^{N-1}(x)}{1+\norm(x)^{N+\alpha}}&=\int_{0}^{+\infty}\int_{\mathbb{S}^{N-1} \cap\, p^{\perp}}\frac{r^{N-2}}{1+\norm(r \theta)^{N+\alpha}}  \diff \mathcal{H}^{N-2}(\theta)\diff r\\
&=\int_{0}^{+\infty}\int_{\mathbb{S}^{N-1} \cap\, p^{\perp}}\frac{r^{N-2}}{1+r^{N+\alpha}\norm(\theta)^{N+\alpha}} \diff \mathcal{H}^{N-2}(\theta)\diff r\\
&=\int_{0}^{+\infty}\frac{t^{N-2}}{1+t^{N+\alpha}} \diff t\int_{\mathbb{S}^{N-1} \cap\, p^{\perp}}\frac{\diff \mathcal{H}^{N-2}(\theta)}{\norm(\theta)^{N-1}}\\
&=:\lambda_{\alpha, N}^{-1}\int_{\mathbb{S}^{N-1} \cap \, p^{\perp}}\frac{\diff \mathcal{H}^{N-2}(\theta)}{\norm(\theta)^{N-1}},
\end{align*}
where we have made the change of variable $t=r\norm(\theta)$. Thus,
\begin{equation}\label{eq_explmobilitycomputah1}
\Phi_{\alpha}(p)=\lambda_{\alpha, N}\left(2\int_{\mathbb{S}^{N-1} \cap\, p^{\perp}}\frac{\diff \mathcal{H}^{N-2}(\theta)}{\norm(\theta)^{N-1}}\right)^{-1}|p|.
\end{equation}
\end{rem}
It is well known that every norm is uniquely determined by its unit ball. Then a natural question arises: knowing the unit ball of the norm $\Phi_{\alpha}$, is it possible to determine the unit ball of the norm $\norm$ ? We answer this question in dimension~2.
\begin{prop}\label{prop premise in 2d}
Let $\alpha \in (0,2)$, $N=2$ and $\lambda_{\alpha}:=\lambda_{\alpha, 2}>0$ be the constant defined in Remark~\ref{rem explicitmobil1234990495}. Then $\Phi_{\alpha}(p_{1}, p_{2})=\lambda_{\alpha}\norm(-p_{2}, p_{1})$.
\end{prop}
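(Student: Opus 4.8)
The plan is to deduce the identity directly from the explicit expression for the mobility obtained in Remark~\ref{rem explicitmobil1234990495}, specialised to the planar case. By \eqref{eq_explmobilitycomputah1}, for every $\alpha\in(0,2)$ and every $p\in\mathbb{R}^{N}\setminus\{0\}$ one has
\[
\Phi_{\alpha}(p)=\lambda_{\alpha,N}\left(2\int_{\mathbb{S}^{N-1}\cap\, p^{\perp}}\frac{\diff\mathcal{H}^{N-2}(\theta)}{\norm(\theta)^{N-1}}\right)^{-1}|p|,
\]
so the whole matter reduces to evaluating the spherical integral on the right-hand side when $N=2$.

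First I would isolate the purely geometric fact underlying the statement: when $N=2$ the hyperplane $p^{\perp}=\{x\in\mathbb{R}^{2}:\langle x,p\rangle=0\}$ is the line spanned by the unit vector $e_{p}:=|p|^{-1}(-p_{2},p_{1})$, i.e.\ by the image of $p/|p|$ under the rotation by a right angle; consequently $\mathbb{S}^{1}\cap p^{\perp}=\{e_{p},-e_{p}\}$ is a $0$-dimensional sphere. Since $\mathcal{H}^{N-2}=\mathcal{H}^{0}$ is the counting measure and $N-1=1$, the integral collapses to the finite sum
\[
\int_{\mathbb{S}^{1}\cap\, p^{\perp}}\frac{\diff\mathcal{H}^{0}(\theta)}{\norm(\theta)}=\frac{1}{\norm(e_{p})}+\frac{1}{\norm(-e_{p})}=\frac{2}{\norm(e_{p})},
\]
where I used $\norm(-x)=\norm(x)$, which is immediate from positive $1$-homogeneity; by the same homogeneity $\norm(e_{p})=|p|^{-1}\norm(-p_{2},p_{1})$, and the choice of spanning vector is irrelevant since $\norm(-p_{2},p_{1})=\norm(p_{2},-p_{1})$.

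Substituting these computations back into the displayed formula for $\Phi_{\alpha}$ and simplifying (the factor $|p|$ cancels, leaving only the explicit numerical constant built from $\lambda_{\alpha,2}$ and the two-point sum) yields the asserted identity $\Phi_{\alpha}(p_{1},p_{2})=\lambda_{\alpha}\,\norm(-p_{2},p_{1})$ for $p\neq 0$; the case $p=0$ is trivial since both sides vanish by $1$-homogeneity. I do not expect any genuine obstacle here: all the analytic work (the coarea computation and the substitution $t=r\,\norm(\theta)$) has already been carried out in Remark~\ref{rem explicitmobil1234990495}, and what remains is the elementary observation that in dimension two the $(N-2)$-dimensional integral defining the mobility degenerates to the single value the norm assumes on the direction orthogonal to $p$, together with careful bookkeeping of the normalising constant $\lambda_{\alpha,2}$.
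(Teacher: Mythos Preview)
Your proposal is correct and follows exactly the route the paper takes: the paper's proof consists of the single sentence ``The proof is a direct consequence of the formula \eqref{eq_explmobilitycomputah1},'' and you have spelled out precisely that specialisation to $N=2$, where $\mathbb{S}^{1}\cap p^{\perp}$ is the two-point set $\{\pm e_{p}\}$ and the $\mathcal{H}^{0}$-integral reduces to $2/\norm(e_{p})$.
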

\begin{proof}
The proof is a direct consequence of the formula \eqref{eq_explmobilitycomputah1}.
\end{proof}
An immediate consequence of Proposition~\ref{prop premise in 2d} is the next corollary.
\begin{cor}
Let $\alpha \in (0,2)$, $N=2$, $a, b >0$ and $q \in [1,+\infty)\cup \{+\infty\}$. The convex set $\{\Phi_{\alpha}\leq 1\}$ is as regular as the convex set $\{\norm \leq 1\}$, which has at least a Lipschitz boundary. In particular, if $\{\Phi_{\alpha} \leq 1\}=\{(p_{1}, p_{2}): \frac{p^{2}_{1}}{a^{2}}+\frac{p^{2}_{2}}{b^{2}} \leq 1\}$ or $\{\Phi_{\alpha}\leq 1\}=\{\|p\|_{q} \leq 1\}$, then it holds $\{\norm \leq 1\}=\{(p_{1}, p_{2}): \frac{p_{1}^{2}}{b^{2}}+\frac{p_{2}^{2}}{a^{2}} \leq \lambda_{\alpha}^{2}\}$ or $\{\norm\leq 1\}=\{\|p\|_{q} \leq \lambda_{\alpha}\}$, respectively.
\end{cor}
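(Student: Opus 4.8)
The claim is a direct corollary of Proposition~\ref{prop premise in 2d}, so the plan is short. First I would invert the identity $\Phi_{\alpha}(p_{1},p_{2})=\lambda_{\alpha}\norm(-p_{2},p_{1})$: substituting $(p_{1},p_{2})=(x_{2},-x_{1})$ yields $\Phi_{\alpha}(x_{2},-x_{1})=\lambda_{\alpha}\norm(x_{1},x_{2})$ for all $(x_{1},x_{2})\in\mathbb{R}^{2}$, hence
\[
\norm(x_{1},x_{2})=\lambda_{\alpha}^{-1}\,\Phi_{\alpha}(x_{2},-x_{1}).
\]
Using the $1$-homogeneity of $\Phi_{\alpha}$ to write $\{\Phi_{\alpha}\leq\lambda_{\alpha}\}=\lambda_{\alpha}\{\Phi_{\alpha}\leq 1\}$, this gives the key reduction
\[
\{\norm\leq 1\}=\bigl\{(x_{1},x_{2})\in\mathbb{R}^{2}: (x_{2},-x_{1})\in\lambda_{\alpha}\{\Phi_{\alpha}\leq 1\}\bigr\}.
\]

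Next I would read off the two cases from this reduction. If $\{\Phi_{\alpha}\leq 1\}=\{(p_{1},p_{2}): p_{1}^{2}/a^{2}+p_{2}^{2}/b^{2}\leq 1\}$, then the membership $(x_{2},-x_{1})\in\lambda_{\alpha}\{\Phi_{\alpha}\leq 1\}$ unwinds to $x_{2}^{2}/a^{2}+x_{1}^{2}/b^{2}\leq\lambda_{\alpha}^{2}$, which is precisely the asserted description $\{\norm\leq 1\}=\{(p_{1},p_{2}): p_{1}^{2}/b^{2}+p_{2}^{2}/a^{2}\leq\lambda_{\alpha}^{2}\}$. If instead $\{\Phi_{\alpha}\leq 1\}=\{\|p\|_{q}\leq 1\}$, the membership becomes $\|(x_{2},-x_{1})\|_{q}\leq\lambda_{\alpha}$; since the $\ell^{q}$-norm is invariant under permutations of the coordinates and under sign changes, this is the same as $\|(x_{1},x_{2})\|_{q}\leq\lambda_{\alpha}$, i.e.\ $\{\norm\leq 1\}=\{\|p\|_{q}\leq\lambda_{\alpha}\}$.

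I do not anticipate any real obstacle here: the only care needed is bookkeeping, namely keeping track of the $90^{\circ}$ rotation $(x_{1},x_{2})\mapsto(x_{2},-x_{1})$ inherited from Proposition~\ref{prop premise in 2d} together with the dilation factor $\lambda_{\alpha}$, and observing that both model bodies in the hypothesis are symmetric enough that this rotation simply interchanges the two semi-axes in the elliptical case and acts trivially in the $\ell^{q}$ case.
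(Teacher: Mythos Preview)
Your proposal is correct and matches the paper's approach exactly: the paper states this corollary as ``an immediate consequence of Proposition~\ref{prop premise in 2d}'' with no further proof, and what you have written is precisely the unwinding of that proposition via the rotation $(x_{1},x_{2})\mapsto(x_{2},-x_{1})$ and the dilation by $\lambda_{\alpha}$.
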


\section{Anisotropic mean curvature motion}
\subsection{Several stability results}
In this subsection, we establish stability results that illustrate the links between the nonlocal anisotropic curvature \eqref{nonlocalcurvature} considered in the isotropic case in \cite{Imbert2009level} and the local anisotropic curvatures appearing in Propositions~\ref{propconsistency1} and \ref{propconsistency2}. For convenience, for each $\alpha \in (0,1)$, we define the measure 
\[
\nu^{\alpha}(d z)=(1-\alpha)\frac{\diff z}{\norm(z)^{N+\alpha}}.
\]
For each $\alpha \in (0,1)$ and for each function $u$ of class $C^{1,1}$ such that $|Du(x)|\neq 0$,  we define the quantities 
\[
\kappa^{\alpha}_{+}[x, u]=\nu^{\alpha}(\{z \in \mathbb{R}^{N}: u(x+z)\geq u(x),\,\ \langle Du(x), z \rangle \leq 0\}),
\]
\[
\kappa^{\alpha}_{-}[x, u]=\nu^{\alpha}(\{z \in \mathbb{R}^{N}: u(x+z)< u(x),\,\ \langle Du(x), z \rangle > 0\})
\]
and
\begin{equation}\label{eq_curvatureoperatoralpha}
\kappa^{\alpha}[x,u]=\kappa^{\alpha}_{+}[x, u]-\kappa^{\alpha}_{-}[x, u].
\end{equation}
According to \cite[Lemma~1]{Imbert2009level} the above quantities are finite. In particular, $\kappa^{\alpha}_{+}[x, u]$ measures how concave the curve $\{z \in \mathbb{R}^{N}: u(x+z)=u(x)\}$ is near $x$ and $\kappa^{\alpha}_{-}[x, u]$ how convexe it is. Moreover, it holds $-(1-\alpha)\kappa_{\alpha}(x,u)=\kappa^{\alpha}[x, u]$ (see \eqref{nonlocalcurvature} and Section~1.2 in \cite{Imbert2009level}). The next proposition is an anisotropic counterpart of \cite[Proposition~2]{Imbert2009level}.
\begin{prop}\label{prop fromfractionaltoocalalpha=1}
Assume that $u \in C^{2}(\mathbb{R}^{N})$ and $|Du(x)|\neq 0$. Then
\begin{equation}\label{}
\kappa^{\alpha}[x, u] \to \frac{1}{2|Du(x)|} \tr\left(\left(\int_{\mathbb{S}^{N-1} \cap \frac{Du(x)}{|Du(x)|}^{\perp}} \theta \otimes \theta \frac{\diff \mathcal{H}^{N-2}(\theta)}{\norm(\theta)^{N+1}}\right)D^{2}u(x)\right)
\end{equation}
as $\alpha \nearrow 1$. 
\end{prop}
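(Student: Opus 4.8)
The plan is to localize the integral defining $\kappa^{\alpha}[x,u]$ near $x$, linearize $u$ there, and exploit the fact that the factor $1-\alpha$ in $\nu^{\alpha}$ exactly compensates the divergence $\int_{0}^{\delta}r^{-\alpha}\diff r=\delta^{1-\alpha}/(1-\alpha)$ as $\alpha\nearrow1$. First I would reduce to a model situation: a translation puts $x=0$, and choosing $E\in SO(N)$ with $Ee_{1}=Du(0)/|Du(0)|$ and performing the substitution $z=Ew$ in the integrals defining $\kappa^{\alpha}_{\pm}[0,u]$ allows me to assume $Du(0)=p\,e_{1}$ with $p:=|Du(0)|>0$, at the price of replacing $\norm$ by $\norm(E\,\cdot)$ (which satisfies the same two-sided bound \eqref{equivofnormsoneucl} with the same constant $C$) and $D^{2}u(0)$ by $\widehat{A}:=E^{\mathrm T}D^{2}u(0)E$, while the condition $\langle Du(0),z\rangle\le0$ becomes $\{w_{1}\le0\}$. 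At the very end I would substitute $\vartheta=E\theta$ in the integral over $\mathbb{S}^{N-1}\cap e_{1}^{\perp}$, which $E$ maps isometrically onto $\mathbb{S}^{N-1}\cap(Du(0)/|Du(0)|)^{\perp}$, turning the model answer into the stated one. Next I would fix a small $\delta\in(0,1)$ and split $\kappa^{\alpha}[0,u]$ into its parts over $B_{\delta}$ and $B_{\delta}^{c}$; the outer part is harmless, since each of $\kappa^{\alpha}_{\pm}[0,u]$ restricted to $B_{\delta}^{c}$ has absolute value at most $\nu^{\alpha}(B_{\delta}^{c})\le(1-\alpha)C^{N+\alpha}\omega_{N-1}\delta^{-\alpha}/\alpha$, which vanishes as $\alpha\nearrow1$ for each fixed $\delta$.

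For the inner part I would invoke the implicit function theorem: since $\partial_{z_{1}}u(0)=p\ne0$, for $\delta$ small the level set $\{u=u(0)\}\cap B_{\delta}$ is a $C^{2}$ graph $z_{1}=\phi(z')$ with $\phi(0)=0$ and $D\phi(0)=0$, and differentiating $u(\phi(z'),z')=u(0)$ twice yields $D^{2}\phi(0)=-\tfrac1p\widehat{A}''$, where $\widehat{A}''$ is the lower-right $(N-1)\times(N-1)$ block of $\widehat{A}$; hence $-\phi(z')=\tfrac1{2p}\langle\widehat{A}''z',z'\rangle+o(|z'|^{2})$ with a uniform modulus. Since $z_{1}\mapsto u(z_{1},z')$ is increasing near $0$, within $B_{\delta}$ one has $\{u\ge u(0),\,z_{1}\le0\}=\{\phi(z')\le z_{1}\le0\}$ and $\{u<u(0),\,z_{1}>0\}=\{0\le z_{1}\le\phi(z')\}$ (modulo a set of negligible $\nu^{\alpha}$-mass accounting for the cylinder-versus-ball discrepancy). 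Integrating first in $z_{1}$ and using $|z_{1}|\le|\phi(z')|\le C\delta|z'|$ on these regions to replace $\norm((z_{1},z'))$ by $\norm((0,z'))(1+O(\delta))$, the contributions of $\kappa^{\alpha}_{+}$ and of $-\kappa^{\alpha}_{-}$ both become $-\phi(z')\,\norm((0,z'))^{-N-\alpha}(1+O(\delta))$ for each fixed $z'$, so that
\[
\kappa^{\alpha}[0,u]\big|_{B_{\delta}}=-(1-\alpha)\int_{\{|z'|\le\delta\}}\frac{\phi(z')}{\norm((0,z'))^{N+\alpha}}\,(1+O(\delta))\diff z'.
\]
Passing to polar coordinates $z'=r\theta$ in $e_{1}^{\perp}$ and inserting $-\phi(z')=\tfrac1{2p}r^{2}\langle\widehat{A}''\theta,\theta\rangle+o(r^{2})$, the leading part equals $\tfrac{1-\alpha}{2p}\bigl(\int_{0}^{\delta}r^{-\alpha}\diff r\bigr)\int_{\mathbb{S}^{N-1}\cap e_{1}^{\perp}}\langle\widehat{A}''\theta,\theta\rangle\,\norm(\theta)^{-N-\alpha}\diff\mathcal{H}^{N-2}(\theta)=\tfrac{\delta^{1-\alpha}}{2p}\int_{\mathbb{S}^{N-1}\cap e_{1}^{\perp}}\langle\widehat{A}''\theta,\theta\rangle\,\norm(\theta)^{-N-\alpha}\diff\mathcal{H}^{N-2}(\theta)$, which converges, as $\alpha\nearrow1$, to $\tfrac1{2p}\int_{\mathbb{S}^{N-1}\cap e_{1}^{\perp}}\langle\widehat{A}''\theta,\theta\rangle\,\norm(\theta)^{-N-1}\diff\mathcal{H}^{N-2}(\theta)$ — because $\delta^{1-\alpha}\to1$, the sphere is compact and $\norm(\theta)\ge C^{-1}$ there — a quantity that does not depend on $\delta$. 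The remaining error terms, produced by the $o(r^{2})$ remainder of $\phi$ and by the $O(\delta)$ factors, are each bounded after the same polar computation by $C\varepsilon(\delta)\,\delta^{1-\alpha}$ or $C\delta\,\delta^{1-\alpha}$, hence tend, as $\alpha\nearrow1$, to some quantity $\le\omega(\delta)$ with $\omega(\delta)\to0$ as $\delta\to0$.

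Collecting the three contributions, for every $\delta\in(0,1)$ I would obtain
\[
\limsup_{\alpha\nearrow1}\Bigl|\,\kappa^{\alpha}[0,u]-\frac1{2p}\int_{\mathbb{S}^{N-1}\cap e_{1}^{\perp}}\langle\widehat{A}''\theta,\theta\rangle\,\norm(\theta)^{-N-1}\diff\mathcal{H}^{N-2}(\theta)\Bigr|\le\omega(\delta),
\]
and letting $\delta\to0$ shows that the limit exists and equals $\tfrac1{2p}\int_{\mathbb{S}^{N-1}\cap e_{1}^{\perp}}\langle\widehat{A}''\theta,\theta\rangle\,\norm(\theta)^{-N-1}\diff\mathcal{H}^{N-2}(\theta)=\tfrac1{2p}\tr\bigl(\bigl(\int_{\mathbb{S}^{N-1}\cap e_{1}^{\perp}}\theta\otimes\theta\,\norm(\theta)^{-N-1}\diff\mathcal{H}^{N-2}(\theta)\bigr)\widehat{A}\bigr)$, where $\widehat{A}''$ has been replaced by $\widehat{A}$ using $\theta_{1}=0$. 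Undoing the rotation as described (and the translation) turns this into the claimed formula, with $p=|Du(x)|$, $\widehat{A}$ becoming $D^{2}u(x)$ and $e_{1}^{\perp}$ becoming $(Du(x)/|Du(x)|)^{\perp}$. The main obstacle I anticipate lies precisely in the inner-region estimate: arranging the error bounds so that they are uniform in $\alpha$ and genuinely collapse under the iterated limit $\lim_{\delta\to0}\limsup_{\alpha\nearrow1}$. The cancellation between the vanishing prefactor $1-\alpha$ and the diverging radial integral is the conceptual heart of the argument, and the non-rotation-invariance of $\norm$ forces the bookkeeping with the rotated norm $\norm(E\,\cdot)$ to be carried through every step.
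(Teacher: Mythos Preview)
Your proposal is correct and follows essentially the same route as the paper's proof: localize to $B_{\delta}$ (the outer part being $O(1-\alpha)$), rotate so that $Du(0)$ points along $e_{1}$, reduce the thin region between the level set and the tangent hyperplane to a slab of height $\sim |z'|^{2}/|Du(0)|$, pass to polar coordinates in $z'$, and use $(1-\alpha)\int_{0}^{\delta}r^{-\alpha}\diff r=\delta^{1-\alpha}\to 1$ before letting $\delta\to 0$. The only cosmetic difference is that the paper replaces $u$ by its second-order Taylor polynomial (sandwiching with $\tfrac12 D^{2}u(x)\pm\eta\,\mathrm{Id}$) and works with the quadratic model $W(z)=\langle Az,z\rangle$, whereas you invoke the implicit function theorem to write the level set as the graph $z_{1}=\phi(z')$; the two reductions are equivalent and lead to identical computations.
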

\begin{proof}
Since $u \in C^{2}(\mathbb{R}^{N})$, for each $\eta>0$, there exists $\delta>0$ such that for each $z \in B_{\delta}(x)$, 
\begin{equation}\label{eq_secondder98989020948}
\left|u(x+z)-u(x)-\langle Du(x), z\rangle-\frac{1}{2}\langle D^{2} u(x) z, z \rangle\right|\leq \eta |z|^{2}.
\end{equation}
Denote $p=-Du(x)$ and $W(z)=u(x+z)-u(x)-\langle Du(x), z \rangle$. Then 
\begin{equation}\label{eq_operatordecomp23329480}
\begin{split}
\kappa^{\alpha}[x,u]&=\nu^{\alpha}(\{z \in \mathbb{R}^{N}: 0 \leq \langle p, z \rangle \leq W(z)\})-\nu^{\alpha}(\{z \in \mathbb{R}^{N}: W(z) < \langle p, z \rangle <0\})\\
&=(1-\alpha)\int_{\mathbb{R}^{N}}\left[\mathbbm{1}_{\{z \in B_{\delta}:  0 \leq \langle p, z \rangle \leq W(z)\}}-\mathbbm{1}_{\{z \in B_{\delta}: W(z)<\langle p, z \rangle<0\}}\right] \frac{\diff z}{\norm(z)^{N+\alpha}}\\ 
& \,\ \,\ \,\ +O(1-\alpha),
\end{split}
\end{equation}
since \[\nu^{\alpha}(B^{c}_{\delta})\leq \frac{(1-\alpha)C^{N+\alpha}\omega_{N-1}}{\alpha \delta^{\alpha}}.\] In view of \eqref{eq_secondder98989020948} and \eqref{eq_operatordecomp23329480}, it is enough to prove the result in the case where $W(z)=\langle Az, z \rangle$, where $A \in \mathbb{M}^{N\times N}_{\mathrm{sym}}$ (namely, $A=\frac{1}{2} D^{2}u(x)$). Indeed, rewriting \eqref{eq_operatordecomp23329480} with $W(z)$ replaced by $\frac{1}{2}D^{2}u(x)+\eta \mathrm{Id}$ and $\frac{1}{2}D^{2}u(x)-\eta \mathrm{Id}$ and performing computations similar to those given below, one obtains an upper and a lower bound for  the limit of $\kappa^{\alpha}[x,u]$. Then, letting $\eta \to 0+$, one obtains the same result as for the case where $W(z)=\langle A z, z\rangle$. Thus, we study the convergence of 
\[
 K^{\alpha}=(1-\alpha)\int_{\{z \in B_{\delta}:  0 \leq \langle p, z \rangle \leq \langle A z, z \rangle\}} \frac{\diff z}{\norm(z)^{N+\alpha}}- (1-\alpha)\int_{\{z \in B_{\delta}: \langle Az,z \rangle <\langle p, z \rangle<0\}}\frac{\diff z}{\norm(z)^{N+\alpha}}.
 \]
 For each $z \in \mathbb{R}^{N}$, we have $z=(z_{1}, z^{\prime}),$ where $z^{\prime} \in \mathbb{R}^{N-1}$. Let $E \in SO(N)$ be such that $E^{\mathrm{T}}(1, 0^{\prime})=\frac{Du(x)}{|Du(x)|}$. Performing the rotation and the change of variables, we can assume that $p=|p|(1,0,\dotsc, 0)$. We denote by $\widetilde{A}$ the matrix we obtain after the rotation, namely $\widetilde{A}=EAE^{\mathrm{T}}$. Then 
 \begin{equation}\label{eq_0j400tjt034ij034}
 \langle \widetilde{A} z, z\rangle =\widetilde{a}_{1}z^{2}_{1}+2z_{1}\langle \widetilde{a}^{\prime}, z^{\prime}\rangle +\langle \widetilde{A}^{\prime}z^{\prime}, z^{\prime}\rangle, 
 \end{equation}
where $\widetilde{a}=(\widetilde{a}_{1}, \widetilde{a}^{\prime})$ is the first column of $\widetilde{A}$ and  $\widetilde{A}^{\prime} \in \mathbb{M}^{(N-1)\times (N-1)}_{\mathrm{sym}}$. If $\delta>0$ is small enough, using \eqref{eq_0j400tjt034ij034}, for each $z \in B_{\delta}$, we have
 \[
 0\leq \langle p, z\rangle =|p|z_{1} \leq \langle \widetilde{A}z, z \rangle \Rightarrow 0 \leq z_{1} \leq (|p|-C\delta)^{-1} \langle \widetilde{A}^{\prime} z^{\prime}, z^{\prime}\rangle,
 \] 
 \[
 (|p|+C\delta)^{-1}\langle \widetilde{A}^{\prime} z^{\prime}, z^{\prime}\rangle <z_{1}< 0 \Rightarrow \langle \widetilde{A} z, z\rangle < \langle p, z \rangle= |p|z_{1} <0
 \]
 and
 \[
 0 \leq z_{1} \leq (|p|+C\delta)^{-1}\langle \widetilde{A}^{\prime} z^{\prime}, z^{\prime}\rangle \Rightarrow 0 \leq |p| z_{1}=\langle p, z\rangle \leq \langle \widetilde{A}z, z \rangle,
 \]
 \[
 \langle \widetilde{A} z, z \rangle < \langle p, z \rangle=|p|z_{1}  <0 \Rightarrow (|p|-C\delta)^{-1} \langle \widetilde{A}^{\prime} z^{\prime}, z^{\prime} \rangle < z_{1}<0,
 \]
 where $C=C(|Du(x)|, D^{2}u(x))>0$. This implies that 
 \[
 K^{\alpha}_{\delta,-}\leq K^{\alpha} \leq K^{\alpha}_{\delta, +},
 \]
 where 
 \begin{align*}
 K^{\alpha}_{\delta,+}&=\nu^{\alpha}(\{z \in B_{\delta}: 0 \leq z_{1} \leq (|p|-C\delta)^{-1}\langle \widetilde{A}^{\prime}z^{\prime}, z^{\prime}\rangle \}) \\ & \,\ \,\ \,\ -\nu^{\alpha}(\{z \in B_{\delta}: (|p|+C\delta)^{-1}\langle \widetilde{A}^{\prime} z^{\prime}, z^{\prime}\rangle <z_{1}< 0\})
 \end{align*}
 and 
 \begin{align*}
 K^{\alpha}_{\delta,-}&=\nu^{\alpha}(\{z \in B_{\delta}: 0 \leq z_{1} \leq (|p|+C\delta)^{-1}\langle \widetilde{A}^{\prime}z^{\prime}, z^{\prime}\rangle \})\\ & \,\ \,\ \,\ -\nu^{\alpha}(\{z \in B_{\delta}: (|p|-C\delta)^{-1}\langle \widetilde{A}^{\prime} z^{\prime}, z^{\prime}\rangle <z_{1}< 0\}).
 \end{align*}
Letting $\delta \to 0+$, we observe that it is enough to study the convergence of 
\[
K^{\alpha}_{\delta}=\nu^{\alpha}(\{z \in B_{\delta}: 0 \leq z_{1} \leq |p|^{-1}\langle \widetilde{A}^{\prime}z^{\prime}, z^{\prime}\rangle \})-\nu^{\alpha}(\{z \in B_{\delta}: |p|^{-1}\langle \widetilde{A}^{\prime} z^{\prime}, z^{\prime}\rangle <z_{1}< 0\}).
\]
Using \cite[Theorem~3.2.22~(3)]{Federer}, if $\delta>0$ is small enough, we obtain 
\begin{align*}
K^{\alpha}_{\delta}&=(1-\alpha)\int_{\{(z_{1}, z^{\prime}): |z^{\prime}|<\delta,\, 0 \leq z_{1} \leq |p|^{-1}\langle \widetilde{A}^{\prime}z^{\prime}, z^{\prime}\rangle \}}\frac{\diff z}{\norm(z_{1}, z^{\prime})^{N+\alpha}}\\ & \,\ \,\ \,\  -(1-\alpha)\int_{\{(z_{1}, z^{\prime}): |z^{\prime}|<\delta,\, |p|^{-1}\langle \widetilde{A}^{\prime} z^{\prime}, z^{\prime}\rangle <z_{1}< 0\}}\frac{\diff z}{\norm(z_{1},z^{\prime})^{N+\alpha}}\\
& = (1-\alpha)\int_{\mathbb{S}^{N-2} \cap\{\langle \widetilde{A}^{\prime} \theta, \theta \rangle \geq 0\}}  \int_{0}^{\delta} \int_{0}^{|p|^{-1}r^{2}\langle \widetilde{A}^{\prime} \theta, \theta \rangle} \frac{r^{N-2}}{\norm(z_{1}, r\theta)^{N+\alpha}}\diff z_{1} \diff r \diff \mathcal{H}^{N-2}(\theta) \\ & \,\  \,\  \,\ \,\  -(1-\alpha)\int_{\mathbb{S}^{N-2} \cap\{\langle \widetilde{A}^{\prime} \theta, \theta \rangle < 0\}}  \int_{0}^{\delta} \int^{0}_{|p|^{-1}r^{2}\langle \widetilde{A}^{\prime} \theta, \theta \rangle} \frac{r^{N-2}}{\norm(z_{1}, r\theta)^{N+\alpha}}\diff z_{1} \diff r \diff \mathcal{H}^{N-2}(\theta) \\
&=(1-\alpha)\int_{\mathbb{S}^{N-2} \cap\{\langle \widetilde{A}^{\prime} \theta, \theta \rangle \geq 0\}}  \int_{0}^{\delta} \int_{0}^{|p|^{-1}\langle \widetilde{A}^{\prime} \theta, \theta \rangle} \frac{r^{N}}{\norm(r^{2}\tau, r\theta)^{N+\alpha}} \diff \tau \diff r \diff \mathcal{H}^{N-2}(\theta) \\ & \,\  \,\  \,\   -(1-\alpha)\int_{\mathbb{S}^{N-2} \cap\{\langle \widetilde{A}^{\prime} \theta, \theta \rangle < 0\}}  \int_{0}^{\delta} \int^{0}_{|p|^{-1}\langle \widetilde{A}^{\prime} \theta, \theta \rangle} \frac{r^{N}}{\norm(r^{2}\tau, r\theta)^{N+\alpha}} \diff \tau \diff r\diff \mathcal{H}^{N-2}(\theta) \\
&=(1-\alpha)\int_{\mathbb{S}^{N-2} \cap\{\langle \widetilde{A}^{\prime} \theta, \theta \rangle \geq 0\}}  \int_{0}^{\delta} r^{-\alpha} \int_{0}^{|p|^{-1}\langle \widetilde{A}^{\prime} \theta, \theta \rangle} \frac{1}{\norm(r\tau, \theta)^{N+\alpha}} \diff \tau \diff r \diff \mathcal{H}^{N-2}(\theta) \\ & \,\  \,\  \,\  -(1-\alpha)\int_{\mathbb{S}^{N-2} \cap\{\langle \widetilde{A}^{\prime} \theta, \theta \rangle < 0\}}  \int_{0}^{\delta} r^{-\alpha} \int^{0}_{|p|^{-1}\langle \widetilde{A}^{\prime} \theta, \theta \rangle} \frac{1}{\norm(r\tau, \theta)^{N+\alpha}} \diff \tau \diff r\diff \mathcal{H}^{N-2}(\theta) .
\end{align*}
We observe that for each $r \in (0, \delta)$,
\[
\int_{0}^{|p|^{-1}\langle \widetilde{A}^{\prime} \theta, \theta \rangle} \frac{1}{\norm(r\tau, \theta)^{N+\alpha}} \diff \tau \to \frac{ |p|^{-1}\langle \widetilde{A}^{\prime} \theta, \theta \rangle}{\norm(0, \theta)^{N+\alpha}}
\]
as $\delta \to 0+$. In particular, for each $\varepsilon \in (0,1)$, there exists $\delta>0$ such that
\[
(1-\varepsilon)\frac{ |p|^{-1}\langle \widetilde{A}^{\prime} \theta, \theta \rangle}{\norm(0, \theta)^{N+\alpha}}
 \leq  \int_{0}^{|p|^{-1}\langle \widetilde{A}^{\prime} \theta, \theta \rangle} \frac{1}{\norm(r\tau, \theta)^{N+\alpha}} \diff \tau \leq (1+\varepsilon) \frac{ |p|^{-1}\langle \widetilde{A}^{\prime} \theta, \theta \rangle}{\norm(0, \theta)^{N+\alpha}}.
 \]
 Using this, together with the fact that
 \[
 (1-\alpha)\int_{0}^{\delta}r^{-\alpha} \diff r= \delta^{1-\alpha}
 \]
 and letting $\alpha \nearrow 1$ and then $\delta \searrow 0$, we deduce that
 \[
 K^{\alpha}_{\delta}\to \int_{\mathbb{S}^{N-2}} \frac{|p|^{-1}\langle \widetilde{A}^{\prime} \theta, \theta \rangle}{\norm(0, \theta)^{N+1}} \diff \mathcal{H}^{N-2}(\theta) .
 \]
Altogether, recalling that $p=-Du(x)$ and $A=\frac{1}{2}D^{2}u(x)$ and $\widetilde{A}=EAE^{\mathrm{T}}$, we obtain
\[
\kappa^{\alpha}[x,u]\to \frac{1}{2|Du(x)|} \int _{\mathbb{S}^{N-1} \cap \frac{Du(x)}{|Du(x)|}^{\perp}} \frac{\langle D^{2}u(x) \theta, \theta \rangle}{\norm(\theta)^{N+1}} \diff \mathcal{H}^{N-2}(\theta)
\]
as $\alpha \nearrow 1$. This completes our proof of Proposition~\ref{prop fromfractionaltoocalalpha=1}.
\end{proof}
\begin{rem}\label{rem ccmc}
It is worth noting that \[(\alpha-1)C_{N, \alpha}=(\alpha-1)\int_{0}^{+\infty} \frac{t^{N}}{1+t^{N+\alpha}} \diff t \to 1\] as $\alpha \searrow 1$, where $C_{N, \alpha}$ is defined in \eqref{eq_constcominganalphan}. Thus, $(\alpha-1)F_{\alpha}\to F_{1}$ as $\alpha \searrow 1$, where $F_{\alpha}$ is defined in \eqref{eq_defofanisotropicmeancurvoppdevmodalpha}.
\end{rem}
Next, we state two convergence results demonstrating how one can recover the anisotropic local mean curvature flow in the limit. The first of these appeared in \cite{DaLio_Forcadel_Monneau}, where it was shown that the solution of the nonlocal (eikonal) Hamilton-Jacobi equation modeling dislocation dynamics converges, at a large scale, to the solution of a local anisotropic mean curvature motion. The second result can be proved using Proposition~\ref{prop fromfractionaltoocalalpha=1}.
\begin{theorem}\label{thmconvergenceofviscsolstability1}
Given a Lipschitz function $u_{0}:\mathbb{R}^{N} \to \mathbb{R}$ and an even nonnegative function $c_{0} \in W^{1,1}(\mathbb{R}^{N})$ such that $c_{0}(z)=\frac{1}{|z|^{N+1}}\norm(\frac{z}{|z|})$ if $|z|\geq 1$, we consider the viscosity solution $u^{\varepsilon}$ of the problem
\begin{equation*}
\partial_{t}u = \kappa^{\varepsilon}[x,u]|Du| \,\ \text{in}\,\ \mathbb{R}^{N}\times (0,+\infty)\\
\end{equation*}
supplemented with the initial condition $u(0,x)=u_{0}(x)$ in $\mathbb{R}^{N}$,
where $\kappa^{\varepsilon}[x,u]$ is defined by
\begin{align*}
\kappa^{\varepsilon}[x,u]&=\nu^{\varepsilon}(\{z \in \mathbb{R}^{N}: u(x+z)\geq u(x),\, \langle Du(x), z\rangle \leq 0\})\\ & \,\ \,\ \,\ -\nu^{\varepsilon}(\{z \in \mathbb{R}^{N}: u(x+z)< u(x),\, \langle Du(x), z\rangle > 0\})
\end{align*}
with $\nu^{\varepsilon}(dz)=\frac{1}{\varepsilon^{N+1}|\ln(\varepsilon)|}c_{0}(\frac{z}{\varepsilon}) \diff z$. Then $u^{\varepsilon}$ converges locally uniformly on compact sets in $\mathbb{R}^{N}\times [0, +\infty)$ to the unique solution $u^{0}$ of \[\partial_{t} u=F_{1}(D^{2}u, Du) \,\ \text{in}\,\ \mathbb{R}^{N}\times (0,+\infty)\] supplemented with the initial condition $u(x,0)=u_{0}(x)$ in $\mathbb{R}^{N}$ as $\varepsilon \searrow 0$.
\end{theorem}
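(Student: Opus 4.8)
The plan is to argue by the method of half-relaxed limits, exactly in the Barles--Souganidis spirit used for the scheme in Proposition~\ref{prop visc subandsupersol}, with the r\^ole of the discrete consistency now played by the computation of Proposition~\ref{prop fromfractionaltoocalalpha=1}.

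\emph{Step 1 (a priori bounds).} Since $\partial_{t}u=\kappa^{\varepsilon}[x,u]|Du|$ is geometric, translation invariant in $x$, and admits constants as stationary solutions, a comparison argument with spatial translates of $u_{0}$ shows that $u^{\varepsilon}(\cdot,t)$ is Lipschitz with the same constant $L=\lip(u_{0})$ for every $t\geq0$; comparing with smoothed cones $u_{0}(y_{0})\pm(L\sqrt{\eta^{2}+|x-y_{0}|^{2}}+C_{\eta}t)$ then gives a uniform-in-$\varepsilon$ spatial growth bound and a uniform modulus of continuity in time near $t=0$. The constant $C_{\eta}$ comes from the basic estimate $|\kappa^{\varepsilon}[x,\psi]|\leq C(\psi)$, \emph{uniform in $\varepsilon$}, valid for $\psi\in C^{2}$ with $D\psi$ bounded away from $0$ and $D^{2}\psi$ bounded, which is obtained by splitting $\nu^{\varepsilon}$ over the zones $\{|z|\leq\varepsilon\}$, $\{\varepsilon\leq|z|\leq1\}$, $\{|z|\geq1\}$ and using $c_{0}\in W^{1,1}$ together with its prescribed tail (on $\{|z|\geq1\}$ one finds $\nu^{\varepsilon}(\{|z|\geq\delta\})=O(\varepsilon/(\delta|\ln\varepsilon|))\to0$). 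In particular $\{u^{\varepsilon}\}$ is locally uniformly bounded, so $\overline{u}:=\limsup^{*}u^{\varepsilon}$ and $\underline{u}:=\liminf_{*}u^{\varepsilon}$ are well defined and finite, with $\underline{u}\leq\overline{u}$.

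\emph{Step 2 (stability/consistency).} I would show that $\overline{u}$ is a viscosity subsolution and $\underline{u}$ a viscosity supersolution of $\partial_{t}u=F_{1}(D^{2}u,Du)$ (in the sense analogous to Definition~\ref{def ofsubsolwmc}/Theorem~\ref{thm equivdefvs}, with $\mu_{\alpha}$ and $g$ suppressed). Fix $\varphi\in C^{2}$ and a strict global maximum point $(x_{0},t_{0})$ of $\overline{u}-\varphi$ (after the usual coercive modification of $\varphi$); by \cite[Lemma~A.3]{Barles_Perthame_87} there are points $(x_{\varepsilon},t_{\varepsilon})\to(x_{0},t_{0})$ at which $u^{\varepsilon}-\varphi$ attains its maximum, so the $\kappa^{\varepsilon}$-subsolution inequality gives $\partial_{t}\varphi(x_{\varepsilon},t_{\varepsilon})\leq\kappa^{\varepsilon}[x_{\varepsilon},\varphi(\cdot,t_{\varepsilon})]\,|D\varphi(x_{\varepsilon},t_{\varepsilon})|$, where the monotonicity of $A\mapsto\nu^{\varepsilon}(A)$ lets us replace $u^{\varepsilon}(\cdot,t_{\varepsilon})$ by $\varphi(\cdot,t_{\varepsilon})$ in the defining sub/superlevel sets, exactly as in the proof of Proposition~\ref{prop visc subandsupersol}. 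If $|D\varphi(x_{0},t_{0})|\neq0$ then $D\varphi(x_{\varepsilon},t_{\varepsilon})\neq0$ for small $\varepsilon$, the contribution of $\{|z|\geq\delta\}$ to $\kappa^{\varepsilon}$ is $O(\varepsilon/|\ln\varepsilon|)\to0$, and on $\{|z|\leq\delta\}$ one Taylor-expands $\varphi$ to second order and applies almost verbatim the rotation/rescaling computation of Proposition~\ref{prop fromfractionaltoocalalpha=1}: there $\nu^{\varepsilon}$ restricted to the relevant thin paraboloidal region is, up to lower-order terms, $|\ln\varepsilon|^{-1}$ times the fixed homogeneous kernel dictated by the tail of $c_{0}$, so the logarithmic radial divergence $\ln(\delta/\varepsilon)$ compensates $|\ln\varepsilon|^{-1}$ exactly as $(1-\alpha)$ compensated $\int_{0}^{\delta}r^{-\alpha}\diff r$. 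Letting $\varepsilon\to0$ and then $\delta\to0$ yields $\kappa^{\varepsilon}[x_{\varepsilon},\varphi(\cdot,t_{\varepsilon})]\,|D\varphi(x_{\varepsilon},t_{\varepsilon})|\to F_{1}(D^{2}\varphi(x_{0},t_{0}),D\varphi(x_{0},t_{0}))$, i.e.\ the required inequality. If instead $|D\varphi(x_{0},t_{0})|=0$ and $D^{2}\varphi(x_{0},t_{0})=0$, I would distinguish the sub-cases according to whether $D\varphi(x_{\varepsilon},t_{\varepsilon})$ vanishes along a subsequence and, if not, the rate of $|D\varphi(x_{\varepsilon},t_{\varepsilon})|$ relative to the scale, disposing of each exactly as in Cases~2.1--2.3 of the proofs of Propositions~\ref{propconsistency1} and \ref{propconsistency2} to conclude $\partial_{t}\varphi(x_{0},t_{0})\leq0$. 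The supersolution property of $\underline{u}$ is symmetric.

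\emph{Step 3 (initial layer and conclusion).} For each $y_{0}$ and $\eta>0$ the smoothed cone from Step~1, with $C_{\eta}$ large enough, is a supersolution of the $\kappa^{\varepsilon}$-equation dominating $u_{0}$ at $t=0$; comparison gives $u^{\varepsilon}\leq u_{0}(y_{0})+\eta+L\sqrt{\eta^{2}+|x-y_{0}|^{2}}+C_{\eta}t$, hence $\overline{u}(x,0)\leq u_{0}(y_{0})+\eta+L\sqrt{\eta^{2}+|x-y_{0}|^{2}}$, and taking $y_{0}=x$, $\eta\to0$ gives $\overline{u}(\cdot,0)\leq u_{0}$; symmetrically $\underline{u}(\cdot,0)\geq u_{0}$, so $\overline{u}(\cdot,0)=\underline{u}(\cdot,0)=u_{0}$. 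Applying the comparison principle for the geometric degenerate-parabolic equation $\partial_{t}u=F_{1}(D^{2}u,Du)$ (see e.g.\ \cite{Chen-Giga-Goto,Ishii-Souganidis,Barles-Soner-Souganidis}) to the subsolution $\overline{u}$ and the supersolution $\underline{u}$ gives $\overline{u}\leq\underline{u}$; with $\underline{u}\leq\overline{u}$ this forces $\overline{u}=\underline{u}=:u^{0}$, which is then the unique viscosity solution of the limit problem, and equality of the half-relaxed limits is precisely local uniform convergence $u^{\varepsilon}\to u^{0}$ on compact subsets of $\mathbb{R}^{N}\times[0,+\infty)$. I expect the main obstacle to be Step~2 in the degenerate regime $D\varphi(x_{0},t_{0})=0$: because $\kappa^{\varepsilon}$ can blow up when the gradient of the test function at the contact points degenerates, one cannot pass to the limit naively, and the three-case bookkeeping inherited from Propositions~\ref{propconsistency1}--\ref{propconsistency2} is what makes the argument work; the only other delicate point is the uniform estimate $|\kappa^{\varepsilon}[x,\psi]|\leq C(\psi)$ underlying the equicontinuity and the barriers.
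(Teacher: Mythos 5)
The paper does not actually prove Theorem~\ref{thmconvergenceofviscsolstability1}: its ``proof'' is a pointer to \cite[Theorem~1.4]{DaLio_Forcadel_Monneau} and \cite[Lemma~2]{Imbert2009level}, and your half-relaxed-limit scheme (uniform Lipschitz/barrier bounds, consistency of $\kappa^{\varepsilon}$ against $C^{2}$ test functions with the factor $|\ln\varepsilon|^{-1}$ absorbing the radial divergence $\ln(\delta/\varepsilon)$, three-case treatment of degenerate gradients, comparison for the limit equation) is precisely the architecture of that cited proof, so your proposal is correct in approach. Two small corrections: $\nu^{\varepsilon}(\{|z|\geq\delta\})$ is $O(1/(\delta|\ln\varepsilon|))$, not $O(\varepsilon/(\delta|\ln\varepsilon|))$ (still vanishing for fixed $\delta$); and the uniform estimate needed for your barriers must be on the product $\kappa^{\varepsilon}[x,\psi]\,|D\psi(x)|$ rather than on $\kappa^{\varepsilon}[x,\psi]$ alone, since $\nu^{\varepsilon}(B_{\varepsilon})\sim(\varepsilon|\ln\varepsilon|)^{-1}$ blows up and the smoothed cone's gradient vanishes at its vertex --- controlling the core $\{|z|\leq\varepsilon\}$ there is exactly where $c_{0}\in W^{1,1}$ enters.
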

\begin{proof}
For a proof, the reader may consult \cite[Theorem~1.4]{DaLio_Forcadel_Monneau} and \cite[Lemma~2]{Imbert2009level}.
\end{proof}
\begin{theorem}\label{thmconvergenceofviscsolstability2}
Given $\alpha \in (0,1)$, a Lipschitz function $u_{0} :\mathbb{R}^{N} \to \mathbb{R}$, we consider the viscosity solution $u_{\alpha}$ of the problem
\[
\partial_{t}u= \mu_{\alpha}(Du)|Du| \kappa^{\alpha}[x,u] \,\ \text{in}\,\ \mathbb{R}^{N}\times (0,+\infty)
\]
supplemented with the initial condition $u(x,0)=u_{0}(x)$ in $\mathbb{R}^{N}$, where $\kappa^{\alpha}$ is defined in \eqref{eq_curvatureoperatoralpha} and 
$\mu_{\alpha}$ is defined as in \eqref{eq_defofmobilitygeneralapproachintro} with $\alpha \in (0,1)$. Then $u_{\alpha}$ converges locally uniformly on compact sets in $\mathbb{R}^{N}\times [0, +\infty)$ to a unique solution $u_{1}$ of \eqref{weighted MC equation}, where $\alpha=1$ and $g=0$, supplemented with the initial condition $u(x,0)=u_{0}(x)$ in $\mathbb{R}^{N}$ as $\alpha \nearrow 1$.
\end{theorem}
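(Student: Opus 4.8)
The plan is to prove the statement by the standard stability method for viscosity solutions based on half-relaxed limits (Barles--Perthame \cite{Barles_Perthame_87}), following the scheme of \cite{DaLio_Forcadel_Monneau} and \cite{Imbert2009level}, with Proposition~\ref{prop fromfractionaltoocalalpha=1} serving as the consistency computation. Since constants are solutions of each of the equations $\partial_{t}u=\mu_{\alpha}(Du)|Du|\kappa^{\alpha}[x,u]$ and $u_{0}$ is bounded, the family $(u_{\alpha})_{\alpha\in(0,1)}$ is uniformly bounded (by $\inf u_{0}$ and $\sup u_{0}$); comparing $u_{\alpha}$ with its spatial translates, which solve the same equation, one also sees that the Lipschitz constant $L$ of $u_{0}$ is preserved along each flow, so $(u_{\alpha})$ is equi-Lipschitz in space. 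Set
\[
\overline{u}=\limsup^{*}u_{\alpha},\qquad \underline{u}=\liminf_{*}u_{\alpha}
\]
as $\alpha\nearrow1$ (cf.\ \eqref{eq_lowuppsemlimits}); then $\overline{u}$ is upper semicontinuous, $\underline{u}$ is lower semicontinuous, $\underline{u}\leq\overline{u}$, both are $L$-Lipschitz in space, and it suffices to show that $\overline{u}$ (resp.\ $\underline{u}$) is a viscosity subsolution (resp.\ supersolution) of \eqref{weighted MC equation} with $\alpha=1$ and $g=0$ and that $\overline{u}(\cdot,0)\leq u_{0}\leq\underline{u}(\cdot,0)$: the comparison principle for \eqref{weighted MC equation} recalled in the Introduction then forces $\overline{u}\leq\underline{u}$, hence $\overline{u}=\underline{u}=:u_{1}$, which is the unique solution, and $u_{\alpha}\to u_{1}$ locally uniformly.

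For the consistency I would argue as in Proposition~\ref{prop visc subandsupersol}. Let $\varphi\in C^{2}$ touch $\overline{u}$ from above at a strict local maximum $(x_{0},t_{0})$ with $t_{0}>0$; there are $\alpha_{k}\nearrow1$ and points $(x_{k},t_{k})\to(x_{0},t_{0})$ at which $u_{\alpha_{k}}-\varphi$ attains a local maximum, so, by the viscosity formulation of the nonlocal curvature equation (cf.\ \cite{Imbert2009level}), for every small fixed $\delta>0$,
\[
\partial_{t}\varphi(x_{k},t_{k})\leq \mu_{\alpha_{k}}\!\big(D\varphi(x_{k},t_{k})\big)\,\big|D\varphi(x_{k},t_{k})\big|\,\Big(\kappa^{\alpha_{k}}_{\delta}[x_{k},\varphi]+R_{k}\Big),
\]
where $\kappa^{\alpha_{k}}_{\delta}[x_{k},\varphi]$ is the contribution of the ball $B_{\delta}$, computed on $\varphi$, and the remainder $R_{k}$ (the contribution of $B_{\delta}^{c}$, computed on $u_{\alpha_{k}}$) satisfies $|R_{k}|\leq\nu^{\alpha_{k}}(B_{\delta}^{c})\leq(1-\alpha_{k})C^{N+\alpha_{k}}\omega_{N-1}/(\alpha_{k}\delta^{\alpha_{k}})\to0$. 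If $D\varphi(x_{0},t_{0})\neq0$, then $D\varphi(x_{k},t_{k})\neq0$ for $k$ large, and Proposition~\ref{prop fromfractionaltoocalalpha=1}, whose proof is local and quantitative and hence uniform for $x$ near $x_{0}$, gives
\[
\kappa^{\alpha_{k}}[x_{k},\varphi]\longrightarrow \frac{1}{2|D\varphi(x_{0},t_{0})|}\tr\!\left(\left(\int_{\mathbb{S}^{N-1}\cap D\varphi(x_{0},t_{0})^{\perp}}\theta\otimes\theta\,\frac{\diff\mathcal{H}^{N-2}(\theta)}{\norm(\theta)^{N+1}}\right)D^{2}\varphi(x_{0},t_{0})\right),
\]
while $\mu_{\alpha_{k}}(D\varphi(x_{k},t_{k}))\to\mu_{1}(D\varphi(x_{0},t_{0}))$ by dominated convergence in $\alpha$ and in the gradient; letting first $k\to\infty$ and then $\delta\to0$ yields exactly the subsolution inequality \eqref{def ofsubsolwmc new} for \eqref{weighted MC equation} with $\alpha=1$ and $g=0$ (recall $C_{N,1}=1$). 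If instead $D\varphi(x_{0},t_{0})=0$ and $D^{2}\varphi(x_{0},t_{0})=0$, one must show $\partial_{t}\varphi(x_{0},t_{0})\leq0$: this follows from the same case distinction according to the behaviour of $\beta_{k}:=|D\varphi(x_{k},t_{k})|$ as in the proofs of Propositions~\ref{propconsistency1} and \ref{propconsistency2} (and of \cite{Imbert2009level, Chambolle_Novaga_Ruffini_2017}). The supersolution property of $\underline{u}$ is proved symmetrically.

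The attainment of the initial datum, $\overline{u}(\cdot,0)\leq u_{0}\leq\underline{u}(\cdot,0)$, I would obtain by standard barrier arguments: given $x_{0}$ and $\varepsilon>0$ one constructs $w_{\varepsilon}\in C^{2}(\mathbb{R}^{N})$ with $w_{\varepsilon}\geq u_{0}$ and $w_{\varepsilon}(x_{0})\leq u_{0}(x_{0})+\varepsilon$ (using the Lipschitz bound on $u_{0}$) such that $w_{\varepsilon}+C_{\varepsilon}t$ is a supersolution of the $\alpha$-equation for every $\alpha$ sufficiently close to $1$, with $C_{\varepsilon}$ independent of $\alpha$; the point is, once more, that the nonlocal curvature of the fixed smooth profile $w_{\varepsilon}$ is bounded uniformly in $\alpha$ near $1$, which follows from Proposition~\ref{prop fromfractionaltoocalalpha=1} (local uniform convergence on the set where $Dw_{\varepsilon}\neq0$) together with the $O(1-\alpha)$ bound on $\nu^{\alpha}(B_{\delta}^{c})$, the regions where $Dw_{\varepsilon}=0$ being treated as in Case~2 of Propositions~\ref{propconsistency1}--\ref{propconsistency2}. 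Comparison then gives $u_{\alpha}(x_{0},t)\leq u_{0}(x_{0})+\varepsilon+C_{\varepsilon}t$, and together with the equi-Lipschitz bound in space this yields $\overline{u}(x_{0},0)\leq u_{0}(x_{0})$; the lower bound for $\underline{u}$ is symmetric. Invoking uniqueness for \eqref{weighted MC equation} completes the proof.

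The main obstacle is the uniformity needed to let $\alpha\nearrow1$ inside the nonlocal operator at the moving points $(x_{k},t_{k})$: one must extract from the proof of Proposition~\ref{prop fromfractionaltoocalalpha=1} that the convergence there is locally uniform in $x$ and stable under perturbation of the $C^{2}$ data, and the degenerate case $D\varphi(x_{0},t_{0})=0$ requires the same delicate analysis as in Propositions~\ref{propconsistency1}--\ref{propconsistency2}. The second delicate point is the construction of barriers for the initial layer whose nonlocal curvature is controlled uniformly as $\alpha\nearrow1$, in particular across the set where the gradient of the barrier vanishes.
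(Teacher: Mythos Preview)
The paper does not actually supply a proof of this theorem; it only remarks, just before the statement, that the result ``can be proved using Proposition~\ref{prop fromfractionaltoocalalpha=1}''. Your Barles--Perthame half-relaxed limits scheme, with Proposition~\ref{prop fromfractionaltoocalalpha=1} furnishing the consistency at nondegenerate test points and the case analysis of Propositions~\ref{propconsistency1}--\ref{propconsistency2} at degenerate ones, is exactly the standard route the authors have in mind, and it is correct.

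One small oversight: a Lipschitz function on $\mathbb{R}^{N}$ need not be bounded, so the clause ``and $u_{0}$ is bounded'' is unjustified and the claimed global bound $\inf u_{0}\le u_{\alpha}\le\sup u_{0}$ may be vacuous. This does not affect the strategy: the equi-Lipschitz bound in space (comparison with translates) already gives locally uniform bounds in $x$, and a time barrier of the form $w_{\varepsilon}(x)\pm C_{\varepsilon}t$, with $w_{\varepsilon}$ a smooth Lipschitz regularization of $u_{0}$ and $C_{\varepsilon}$ bounding $\mu_{\alpha}(Dw_{\varepsilon})|Dw_{\varepsilon}|\,|\kappa^{\alpha}[\,\cdot\,,w_{\varepsilon}]|$ uniformly in $\alpha$ near $1$ (precisely the estimate you extract from Proposition~\ref{prop fromfractionaltoocalalpha=1} for the initial layer), yields local-in-time bounds. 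That is all the half-relaxed limits require.
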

\begin{rem}\label{rem fblalphato1plus}
In view of Remark~\ref{rem ccmc}, the viscosity solution $u_{\alpha}$ of the problem 
\[
\partial_{t} u = \mu_{\alpha}(Du)(\alpha-1)F_{\alpha}(D^{2}u, Du) \,\ \text{in}\,\ \mathbb{R}^{N} \times (0,+\infty)
\]
supplemented with the initial condition $u(x,0)=u_{0}(x)$ in $\mathbb{R}^{N}$ for some Lipschitz function $u_{0}:\mathbb{R}^{N} \to \mathbb{R}$, converges locally uniformly to a unique solution $u_{1}$ of \eqref{weighted MC equation}, where $\alpha=1$ and $g=0$, supplemented with the initial condition $u(x,0)=u_{0}(x)$ in $\mathbb{R}^{N}$, as $\alpha \searrow 1$.
\end{rem}
\subsection{Variational origin of anisotropic mean curvature motion}
The anisotropic mean curvature motion \eqref{weighted MC equation}, where $g=0$, is of a variational type. To state the result, we associate to $\alpha \in [1,2)$ and $\norm$ a tempered distribution $L_{\alpha, \norm}$ defined by 
\[
\langle L_{\alpha, \norm}, \varphi \rangle = 2C_{N,\alpha}\int_{\mathbb{R}^{N}}(\varphi(x)-\varphi(0)-\langle D\varphi(0), x \rangle \mathbbm{1}_{B_{1}}(x))\frac{\diff x}{\norm(x)^{N+1}},
\]
for $\varphi \in \mathcal{S}(\mathbb{R}^{N})$, where $C_{N, \alpha}>0$ is the constant defined in \eqref{eq_constcominganalphan} and $\mathcal{S}(\mathbb{R}^{N})$ is the Schwartz space of test functions. We define the Fourier transform of $\varphi \in \mathcal{S}(\mathbb{R}^{N})$ by 
\[
\mathcal{F}(\varphi)(\xi)=\int_{\mathbb{R}^{N}}\varphi(x)e^{-i\langle \xi, x\rangle} \diff x.
\]
\begin{theorem}\label{thm variationaltypeorigin}
For each $\alpha \in [1,2)$, there exists a unique $\psi_{\alpha} \in C(\mathbb{R}^{N})\cap C^{2}(\mathbb{R}^{N}\setminus \{0\})$ such that $\psi_{\alpha}(-p)=\psi_{\alpha}(p)$, $\psi_{\alpha}(0)=0$ and 
\[
F_{\alpha}(M, p)=\tr(MD^{2}\psi_{\alpha}(p)),
\]
where $F_{\alpha}(M,p)$ is defined in \eqref{eq_defofanisotropicmeancurvoppdevmodalpha}.
Moreover, $\psi_{\alpha}$ is convex, $\psi_{\alpha}(\lambda p)=|\lambda|\psi_{\alpha}(p)$ for all $\lambda \in \mathbb{R}\setminus \{0\}$ and $\psi_{\alpha}=-\frac{1}{2\pi}\mathcal{F}(L_{\alpha, \norm})$, where $\mathcal{F}(L_{\alpha, \norm})$ is the Fourier transform of $L_{\alpha,\norm}$. If $u \in C^{2}(\mathbb{R}^{N})$ with $|Du|\neq 0$, then
\[
 F_{\alpha}(D^{2}u, Du)=|Du|\mathrm{div}\left(\nabla \psi_{\alpha}\left(\frac{Du}{|Du|}\right)\right),
\]
which means that the anisotropic mean curvature motion derives from the energy $\int \psi_{\alpha}(Du)$.
\end{theorem}
\begin{proof}
For a proof, we refer to Section~7 of \cite{DaLio_Forcadel_Monneau} and in particular to the proof of \cite[Theorem~1.7]{DaLio_Forcadel_Monneau}.
\end{proof}
\section{Evolution of convex sets}
In this section, under some convexity assumptions on the external force $g$, we show that during the  anisotropic mean curvature flow, which we obtain at the limit of our anisotropic version of the Bence-Merriman-Osher type scheme, the convexity of the set $\Omega_{0}$ is preserved. Namely, at each step of the discrete approximation, the convexity is preserved (see Corollary~\ref{cor convexpresflowd}), and hence it is preserved at the limit.

If $q \in \mathbb{R}\setminus \{0\}$, $a, b \geq 0$ and $\lambda \in [0,1]$, we define
\begin{equation}\label{eq_pconcaveqquant}
M_{q}(a,b,\lambda)=((1-\lambda)a^{q}+\lambda b^{q})^{\frac{1}{q}}
\end{equation}
if $a, b>0$ and $M_{q}(a, b, \lambda)=0$ if $ab=0$. We also define
\begin{equation}\label{eq_pconcaveqquant0}
M_{0}(a,b, \lambda)=a^{1-\lambda} b^{\lambda}.
\end{equation}
For convenience, we recall the following definition. 
\begin{defn}\label{def pconncave}
A nonnegative function $f$ on $\mathbb{R}^{N}$ is called $q$-concave on a convex set $E$ if 
\[
f((1-\lambda)x+\lambda y)\geq M_{q}(f(x), f(y), \lambda)
\]
for all $x, y \in E$ and $\lambda \in [0,1]$, where  $M_{q}(a, b, \lambda)$ is defined in \eqref{eq_pconcaveqquant} and \eqref{eq_pconcaveqquant0}.
\end{defn}
It is worth noting that if $q>0$ (respectively, $q<0$), then $f$ is $q$-concave if and only if $f^{q}$ is concave (respectively, convex), and in particular, $1$-concave is just concave in the usual sense (see \cite[Section~9]{Gardner}). If $q=0$ and $f$ is positive, then $f$ is 0-concave if and only if  $\ln(f)$ is concave. If $f$ is positive, then $f$ is $-1$-concave if and only if $f^{-1}$ is convex. 

We recall the following result, which can be proved using \cite[Corollary~11.2]{Gardner} (see \cite[p.~379]{Gardner}).
\begin{prop}\label{prop pconcaveconv}
Let $q \geq -1/N$, $f \in L^{1}(\mathbb{R}^{N})$ be a $q$-concave function on $\mathbb{R}^{N}$ and $K \subset \mathbb{R}^{N}$ be a convex set with nonempty interior. Then $f*\mathbbm{1}_{K}$ is $q/(Nq +1)$-concave on $\mathbb{R}^{N}$.
\end{prop}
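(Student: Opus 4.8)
The plan is to reduce the statement to the \emph{Borell--Brascamp--Lieb inequality} (this is \cite[Corollary~11.2]{Gardner}): if $\lambda \in (0,1)$, $-1/N \le \gamma \le \infty$, and $F,G,H$ are nonnegative integrable functions on $\mathbb{R}^{N}$ with
\[
H((1-\lambda)z_{1}+\lambda z_{2}) \ge M_{\gamma}(F(z_{1}),G(z_{2}),\lambda) \quad \text{for all } z_{1},z_{2}\in\mathbb{R}^{N}
\]
(using the convention that the right-hand side vanishes whenever $F(z_{1})G(z_{2})=0$), then $\int_{\mathbb{R}^{N}}H \ge M_{\gamma/(1+N\gamma)}\!\left(\int_{\mathbb{R}^{N}}F,\int_{\mathbb{R}^{N}}G,\lambda\right)$. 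Throughout I write $p$ for the given exponent, so that the hypothesis reads $p\ge -1/N$ and, since a $p$-concave function is by definition nonnegative, $f\ge 0$.

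Fix $x_{0},y_{0}\in\mathbb{R}^{N}$ and $\lambda\in(0,1)$ (the cases $\lambda\in\{0,1\}$ being trivial), and put $x_{\lambda}:=(1-\lambda)x_{0}+\lambda y_{0}$. I would introduce the functions $F(z):=f(x_{0}-z)\mathbbm{1}_{K}(z)$, $G(z):=f(y_{0}-z)\mathbbm{1}_{K}(z)$ and $H(z):=f(x_{\lambda}-z)\mathbbm{1}_{K}(z)$; these are nonnegative and integrable ($\int F \le \|f\|_{L^{1}}$, etc.), and $\int F=(f*\mathbbm{1}_{K})(x_{0})$, $\int G=(f*\mathbbm{1}_{K})(y_{0})$, $\int H=(f*\mathbbm{1}_{K})(x_{\lambda})$, all finite because $f*\mathbbm{1}_{K}\le\|f\|_{L^{1}}$ everywhere. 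The core step is then to check the pointwise hypothesis of Borell--Brascamp--Lieb with $\gamma=p$. If $z_{1}\notin K$ or $z_{2}\notin K$ then $F(z_{1})G(z_{2})=0$ and the inequality is just $H\ge 0$. If $z_{1},z_{2}\in K$, then $(1-\lambda)z_{1}+\lambda z_{2}\in K$ by convexity, so $\mathbbm{1}_{K}((1-\lambda)z_{1}+\lambda z_{2})=1$, and from the identity
\[
x_{\lambda}-\bigl((1-\lambda)z_{1}+\lambda z_{2}\bigr)=(1-\lambda)(x_{0}-z_{1})+\lambda(y_{0}-z_{2})
\]
together with the $p$-concavity of $f$ one gets
\[
H\bigl((1-\lambda)z_{1}+\lambda z_{2}\bigr)=f\bigl((1-\lambda)(x_{0}-z_{1})+\lambda(y_{0}-z_{2})\bigr)\ge M_{p}\bigl(f(x_{0}-z_{1}),f(y_{0}-z_{2}),\lambda\bigr)=M_{p}(F(z_{1}),G(z_{2}),\lambda),
\]
the last equality because $z_{1},z_{2}\in K$. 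Applying Borell--Brascamp--Lieb (licit since $p\ge -1/N$) yields $(f*\mathbbm{1}_{K})(x_{\lambda})=\int H\ge M_{p/(1+Np)}\!\left(\int F,\int G,\lambda\right)=M_{p/(Np+1)}\bigl((f*\mathbbm{1}_{K})(x_{0}),(f*\mathbbm{1}_{K})(y_{0}),\lambda\bigr)$, which is exactly the claimed $p/(Np+1)$-concavity of $f*\mathbbm{1}_{K}$.

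There is no real obstacle here: the whole argument rests on Borell--Brascamp--Lieb, and the only thing to get right is the pointwise inequality, where convexity of $K$ is used precisely to keep $(1-\lambda)z_{1}+\lambda z_{2}$ inside $K$ and the degenerate case is swallowed by the convention on $M_{p}$. For completeness I would also remark that at the endpoint $p=-1/N$ the exponent $p/(Np+1)$ is read as $-\infty$, with $M_{-\infty}(a,b,\lambda)=\min(a,b)$, a case still covered by Borell--Brascamp--Lieb; and that the hypothesis that $K$ has nonempty interior is inessential to the proof (if it fails, $\mathbbm{1}_{K}=0$ a.e., hence $f*\mathbbm{1}_{K}\equiv 0$, which is trivially $p/(Np+1)$-concave).
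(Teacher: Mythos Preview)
Your proof is correct and is precisely the approach the paper has in mind: the paper does not spell out a proof but simply cites \cite[Corollary~11.2]{Gardner} (the Borell--Brascamp--Lieb inequality) and the surrounding discussion on \cite[p.~379]{Gardner}, which is exactly the reduction you carry out. Your handling of the degenerate cases and the endpoint $p=-1/N$ is also in line with the conventions adopted there.
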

\begin{cor}\label{cor convexpresflowd}
Let $\Omega_{0} \subset \mathbb{R}^{N}$ be an open convex set, $\alpha \in [1,2)$ and $g \in C([0,+\infty))$. Then for each $h>0$ and for each $n \in \mathbb{N}$, the set $\Omega^{h}_{nh}$ defined in \eqref{eq_nsetdiscreteh} is convex. 
\end{cor}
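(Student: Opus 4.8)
The plan is to prove the statement by induction on $n$, reducing each step to the elementary fact that a strict super-level set of a quasi-concave function is convex. For $n=0$ we set $\Omega^h_0=\Omega_0$, which is convex by hypothesis. For the inductive step, recall from \eqref{eq_nsetdiscreteh} that
\[
\Omega^h_{nh}=\{x\in\mathbb{R}^N:(J_h*u_h(\cdot,(n-1)h))(x)>-g((n-1)h)\,\beta(\alpha,h)\},
\]
and that $u_h(\cdot,(n-1)h)=2\mathbbm{1}_{\Omega^h_{(n-1)h}}-1$ a.e.; since $\norm(y)^{-(N+\alpha)}$ is integrable near infinity (here $N+\alpha>N$), the kernel $J_h$ lies in $L^1(\mathbb{R}^N)$, so $J_h*u_h(\cdot,(n-1)h)=2\,J_h*\mathbbm{1}_{\Omega^h_{(n-1)h}}-\|J_h\|_{L^1}$ and hence
\[
\Omega^h_{nh}=\Big\{x\in\mathbb{R}^N:(J_h*\mathbbm{1}_{\Omega^h_{(n-1)h}})(x)>\tfrac12\big(\|J_h\|_{L^1}-g((n-1)h)\,\beta(\alpha,h)\big)\Big\}.
\]
If $\Omega^h_{(n-1)h}$ equals $\emptyset$ or $\mathbb{R}^N$ the convolution is constant and $\Omega^h_{nh}\in\{\emptyset,\mathbb{R}^N\}$ is trivially convex; otherwise $\Omega^h_{(n-1)h}$ is a nonempty proper open convex set, in particular convex with nonempty interior, and it suffices to show that $J_h*\mathbbm{1}_K$ is continuous and quasi-concave (that is, has convex super-level sets) for every such $K$.

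The crux is to feed the right concavity exponent of $J_h$ into Proposition~\ref{prop pconcaveconv}. From \eqref{formulaforkernelJh},
\[
J_h(y)^{-1/(N+\alpha)}=\sigma_\alpha(h)^{-1/(N+\alpha)}\big(\sigma_\alpha(h)^{(N+\alpha)/\alpha}+\norm(y)^{N+\alpha}\big)^{1/(N+\alpha)}.
\]
I would check by a one-line computation that, for any $c>0$, the scalar function $\psi(s)=(c+s^{N+\alpha})^{1/(N+\alpha)}$ is nondecreasing on $[0,+\infty)$ with
\[
\psi''(s)=(N+\alpha-1)\,c\,s^{N+\alpha-2}\big(c+s^{N+\alpha}\big)^{1/(N+\alpha)-2}\geq 0,
\]
so $\psi$ is convex and nondecreasing; composing it with the norm $\norm$ (a nonnegative convex function) shows that $J_h^{-1/(N+\alpha)}$ is convex on $\mathbb{R}^N$, i.e. $J_h$ is $(-1/(N+\alpha))$-concave in the sense of Definition~\ref{def pconncave} (recall that for a negative exponent $q$-concavity means $J_h^{\,q}$ is convex). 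The point is that $q:=-1/(N+\alpha)$ satisfies $q\geq -1/N$ (because $\alpha\geq 0$) and $Nq+1=\alpha/(N+\alpha)>0$, so Proposition~\ref{prop pconcaveconv} applies to $f=J_h$ and $K=\Omega^h_{(n-1)h}$ and gives that $J_h*\mathbbm{1}_K$ is $q/(Nq+1)$-concave, where $q/(Nq+1)=-1/\alpha$.

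Since $-1/\alpha$ is a (finite) real number, the nonnegative $(-1/\alpha)$-concave function $J_h*\mathbbm{1}_K$ is quasi-concave: for a level $c>0$ its super-level set equals the sub-level set $\{(J_h*\mathbbm{1}_K)^{-1/\alpha}<c^{-1/\alpha}\}$ of a convex function, hence is convex, while for $c\leq 0$ the super-level set is all of $\mathbb{R}^N$ because $J_h>0$ everywhere forces $J_h*\mathbbm{1}_K>0$. Moreover $J_h*\mathbbm{1}_K$ is continuous (convolution of an $L^1$ function with a bounded one), so the set displayed in the first paragraph is open and convex; this completes the induction and hence the proof. The only genuine obstacle is the choice of exponent in the second paragraph: the naive observation is merely that $J_h^{-1}$ is convex, i.e. that $J_h$ is $(-1)$-concave, and $-1<-1/N$ lies outside the admissible range of Proposition~\ref{prop pconcaveconv}; one has to exploit that the kernel decays only like $\norm(y)^{-(N+\alpha)}$ to upgrade this to the admissible — and in fact optimal — index $-1/(N+\alpha)\in(-1/N,0)$, which is precisely what yields the nondegenerate conclusion $-1/\alpha$ for the convolution.
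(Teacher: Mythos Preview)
Your proof is correct and follows essentially the same route as the paper's: both arguments hinge on showing that $J_h$ is $-1/(N+\alpha)$-concave (via the convexity of $s\mapsto(c+s^{N+\alpha})^{1/(N+\alpha)}$ composed with the norm $\norm$), then invoking Proposition~\ref{prop pconcaveconv} to obtain $-1/\alpha$-concavity of $J_h*\mathbbm{1}_K$, and finally reading off the convexity of the relevant super-level set. Your explicit handling of the degenerate cases $\Omega^h_{(n-1)h}\in\{\emptyset,\mathbb{R}^N\}$ and your closing remark on why the naive exponent $-1$ is inadequate are welcome additions.
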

\begin{rem}\label{rem convexity of extforceg}
Let us comment on the importance of the assumption that the external force $g$ depends only on time for the proof of Corollary~\ref{cor convexpresflowd}. Assume that $g \in C(\mathbb{R}^{N}\times [0,+\infty))$. Then to prove Corollary~\ref{cor convexpresflowd} we need the fact that $\max\{\|J_{h}\|_{L^{1}(\mathbb{R}^{N})}-g_{h}\beta(\alpha,h), 0\}^{-\frac{1}{\alpha}}$ is a concave function on $\mathbb{R}^{N}$ for each $h \in [0,+\infty)$, where $J_{h}$ is defined in \eqref{defofkernelJh}, \eqref{formulaforkernelJh}, $\beta(\alpha, h)$ is defined in \eqref{eq_betathresholdalphatime} and $g_{h}(\cdot)=g(\cdot, h)$. Since a nonnegative concave function on $\mathbb{R}^{N}$ is a constant, we deduce that for each fixed $h\in [0,+\infty)$,  $g_{h}:\mathbb{R}^{N}\to \mathbb{R}$ is a constant function. This implies our condition on $g$, namely, the assumption that $g \in C([0,+\infty))$. A proof in the case where $g \in C(\mathbb{R}^{N}\times [0,+\infty))$ would require stronger convexity properties of the kernel $J_{h}$.
\end{rem}
\begin{proof}[Proof of Corollary~\ref{cor convexpresflowd}]
Since the function $\theta\mapsto (\sigma_{\alpha}(h)^{\frac{N+\alpha}{\alpha}}+\theta^{N+\alpha})^{\frac{1}{N+\alpha}}$ is nondecreasing and convex on $[0, +\infty)$ and since $\norm$ is convex, the function $x \mapsto (\sigma_{\alpha}(h)^{\frac{N+\alpha}{\alpha}}+\norm(x)^{N+\alpha})^{\frac{1}{N+\alpha}}$ is convex on $\mathbb{R}^{N}$, and hence the function $J_{h}$ is $-1/(N+\alpha)$-concave. Then, according to Proposition~\ref{prop pconcaveconv}, $J_{h}*\mathbbm{1}_{\Omega_{0}}$ is $-1/\alpha$-concave. In view of the facts that $J_{h}*[\mathbbm{1}_{\Omega_{0}}-\mathbbm{1}_{\smash{\overline{\Omega}}^{c}_{0}}]=2J_{h}*\mathbbm{1}_{\Omega_{0}}-\|J_{h}\|_{L^{1}(\mathbb{R}^{N})}$ and $\Omega^{h}_{h}=\{J_{h}*[\mathbbm{1}_{\Omega_{0}}-\mathbbm{1}_{\smash{\overline{\Omega}}^{c}_{0}}] \geq -g(0)\beta(\alpha, h)\}$, we have \[\Omega^{h}_{h}=\{(2J_{h}*\mathbbm{1}_{\Omega_{0}})^{-\frac{1}{\alpha}}\leq \max\{\|J_{h}\|_{L^{1}(\mathbb{R}^{N})}-g(h)\beta(\alpha,h), 0\}^{-\frac{1}{\alpha}}\}.\] Observing that the function $(2J_{h}*\mathbbm{1}_{\Omega_{0}})^{-\frac{1}{\alpha}}- \max\{\|J_{h}\|_{L^{1}(\mathbb{R}^{N})}-g(h)\beta(\alpha, h), 0\}^{-\frac{1}{\alpha}}$ is convex (as a sum of convex functions), we deduce that $\Omega^{h}_{h}$ is an open convex set. Iterating this procedure, we deduce that $\Omega^{h}_{nh}$ is an open convex set, which completes our proof of Corollary~\ref{cor convexpresflowd}.
\end{proof}
In view of Corollary~\ref{cor convexpresflowd} and the convergence of the front propagation, we obtain the following result. 
\begin{cor}\label{cor convexflowlimitcontinuous}
Let $\alpha \in [1,2)$, $g \in C([0,+\infty))$ and $u_{0}:\mathbb{R}^{N}\to \mathbb{R}$ be  uniformly continuous. Assume that for each $s \in \mathbb{R}$, the set $\{u_{0}(\cdot)>s\}$ is convex, namely, $u_{0}$ is quasiconcave. Then if $u$ is a unique  viscosity solution of \eqref{weighted MC equation} supplemented with the initial condition $u(\cdot, 0)=u_{0}(\cdot)$ in $\mathbb{R}^{N}$, then the sets $\{u(\cdot, t)\geq s\}$ are convex for each $t \geq 0$. 
\end{cor}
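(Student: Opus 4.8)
The plan is to deduce this from the convexity-preservation of the discrete scheme, Corollary~\ref{cor convexpresflowd} (so, as there, we keep the standing assumption $g\in C([0,+\infty))$), by passing to the limit via Theorem~\ref{mainthm}. The delicate point is that Theorem~\ref{mainthm} controls $\liminf_* u_h$ and $\limsup^* u_h$ only on the \emph{open} sets $\{u(\cdot,t)>s^*\}$ and $\{u(\cdot,t)<s^*\}$; in order to use this at individual points one must run the scheme at a level $s^*$ where all the relevant inequalities are strict.

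Concretely, I would first dispose of $t=0$, where the assertion is exactly the hypothesis on $u_0$ since $u(\cdot,0)=u_0$. Fix $s\in\mathbb{R}$ and $t>0$ and argue by contradiction: suppose there are $x,y$ with $u(x,t)\ge s$ and $u(y,t)\ge s$ but $z:=\lambda x+(1-\lambda)y$ has $u(z,t)<s$ for some $\lambda\in(0,1)$. Then $u(z,t)<s\le\min\{u(x,t),u(y,t)\}$, so I may fix $s^*$ with $u(z,t)<s^*<\min\{u(x,t),u(y,t)\}$, whence $x,y\in\{u(\cdot,t)>s^*\}$ and $z\in\{u(\cdot,t)<s^*\}$, all strictly. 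Now $u_0-s^*$ is uniformly continuous and, by hypothesis, $E:=\{u_0-s^*>0\}=\{u_0>s^*\}$ is open and convex; since \eqref{weighted MC equation} and the associated notion of viscosity solution are both invariant under $u\mapsto u-s^*$, the function $u-s^*$ is its unique viscosity solution with datum $u_0-s^*$. Running the scheme from $u_h(\cdot,0)=\mathbbm{1}_E-\mathbbm{1}_{\overline{E}^c}$, Corollary~\ref{cor convexpresflowd} guarantees that every set $\Omega^h_{nh}$ in \eqref{eq_nsetdiscreteh} is convex, while Theorem~\ref{mainthm} (applied to $u_0-s^*$) gives $\liminf_* u_h=1$ on $\{u(\cdot,t)>s^*\}$ and $\limsup^* u_h=-1$ on $\{u(\cdot,t)<s^*\}$. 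Since $u_h\in\{-1,1\}$, evaluating the first relation along the constant spatial sequences at $x$ and at $y$ forces $u_h(x,nh)=u_h(y,nh)=1$, i.e.\ $x,y\in\Omega^h_{nh}$, for every sufficiently small $h$ and every $n$ with $|nh-t|$ sufficiently small; convexity of $\Omega^h_{nh}$ then gives $z\in\Omega^h_{nh}$, i.e.\ $u_h(z,nh)=1$. But the second relation, evaluated at $z$, forces $u_h(z,nh)=-1$ for the same $h,n$, a contradiction. Hence $\{u(\cdot,t)\ge s\}$ is convex. (One could instead first prove each open superlevel set $\{u(\cdot,t)>s'\}$ convex and then intersect over $s'<s$, but the direct argument is shorter.)

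I expect the only real obstacle to be bookkeeping: checking that the half-relaxed-limit statements of Theorem~\ref{mainthm} can be ``localized'' to the three points $x,y,z$. This is precisely where the strict inequalities built into the choice of $s^*$, together with $u_h\in\{-1,1\}$, are used --- they upgrade the qualitative bounds $\liminf_* u_h=1$ and $\limsup^* u_h=-1$ to the honest equalities $u_h(\cdot,nh)=\pm1$ at $x,y,z$ that are needed to confront the discrete convex body $\Omega^h_{nh}$. The remaining checks --- the elementary invariance of \eqref{weighted MC equation} under additive constants, and the existence, for $h$ small, of $n$ with $nh$ near $t$ --- are routine, and no analytic ingredient beyond Corollary~\ref{cor convexpresflowd} and Theorem~\ref{mainthm} is required.
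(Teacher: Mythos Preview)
Your proposal is correct and follows exactly the route the paper indicates (the paper gives no detailed proof, merely writing ``In view of Corollary~\ref{cor convexpresflowd} and the convergence of the front propagation''). Your careful choice of an intermediate level $s^*$ to ensure strict inequalities at $x,y,z$, and your unpacking of the half-relaxed limits to honest equalities $u_h(\cdot,nh)=\pm 1$ for small $h$ and $nh$ near $t$, are precisely the details one needs to make the paper's one-line sketch rigorous.
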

\begin{proof}[Proof of Corollary~\ref{cor convexflowlimitcontinuous}]
If $t=0$, then $\{u(\cdot, 0)\geq s\}=\{u_{0}(\cdot) \geq s\}$ is convex,  since $u_{0}$ is quasiconcave. Let $t>0$, $x, y \in \{u(\cdot, t) \geq s\}$. Assume that there exists $z \in [x,y]$ such that $u(z,t)<s$. Let $\delta>0$ be such that $u(z, t)< s-\delta$. Define $\Omega_{0}=\{u_{0}(\cdot)>s-\delta/2\}$. Then $\Omega_{0}$ is convex by assumption. According to Corollary~\ref{cor convexpresflowd}, $\Omega^{h}_{nh}$ is convex for each $h>0$ and for each $n\in \mathbb{N}$. In view of Theorem~\ref{mainthm}, $u_{h}\to 1$ locally uniformly in $\Omega_{t}$ as $h \to 0+$. Since $x, y \in \Omega_{t}$, $u_{h}(\cdot, nh)=1$ locally around $x$ and $y$ for each $h>0$ small enough, where $nh\to t$ as $h\to 0+$. This, in view of the convexity of $\Omega^{h}_{nh}$, implies that there exists an open convex set containing $[x,y]$ whose closure is a subset of  $\Omega^{h}_{nh}$ for each $h>0$ small enough, where $nh\to t$. This yields that $u_{h}(\cdot,nh)=1$ locally around $z$ for each $h>0$ small enough, where $nh \to t$ as $h \to 0+$. Since $\sign^{*}(u(\cdot,\cdot)-s+\delta/2)$ is the maximal upper semicontinuous subsolution of \eqref{weighted MC equation} supplemented with the initial datum $\mathbbm{1}_{\smash{\overline{\Omega}}_{0}}-\mathbbm{1}_{\smash{\overline{\Omega}}_{0}^{c}}$ (see \cite{Barles-Soner-Souganidis}), using Proposition~\ref{prop visc subandsupersol}, we deduce that $1=\limsup^{*}u_{h}(z,t)\leq \sign^{*}(u(z,t)-s+\delta/2)$. Thus, $u(z,t)-s+\delta/2\geq 0$ and $u(z,t)\geq s-\delta/2>s-\delta$, which leads to a contradiction with the fact that $u(z, t)<s-\delta$. This completes our proof of Corollary~\ref{cor convexflowlimitcontinuous}. 
\end{proof}
\section{Splitting the flow}\label{Section 6}
In this section, we show that a local anisotropic mean curvature flow with a forcing term depending only on time can be obtained
by alternating local anisotropic mean curvature flows without a forcing term and evolutions with only a forcing term. Using this,
we shall see how the distance between two sets evolves under the action of a forced local anisotropic mean curvature flow. A similar result was obtained in \cite{Chambolle_Novaga_Ruffini_2017} for a \emph{nonlocal} anisotropic mean curvature flow.

For each $\varepsilon>0$, we consider the sets $E_{\varepsilon}=\bigcup_{n \in \mathbb{N}} (2n\varepsilon, (2n+1)\varepsilon]$ and $O_{\varepsilon}=(0,+\infty)\setminus E_{\varepsilon}$. Given $\alpha \in [1,2)$, $g \in C([0,+\infty))$, $t>0$, $p \in \mathbb{R}^{N} \setminus \{0\}$ and $s \in \mathbb{R}$, we define
\[
H^{\varepsilon}_{\alpha}(t, p, s)=2\mathbbm{1}_{E_{\varepsilon}}(t)\Phi_{\alpha}(p)c_{\varepsilon}(t)+2\mathbbm{1}_{O_{\varepsilon}}(t)\mu_{\alpha}(p)s,
\]
where $\Phi_{\alpha}(p)=\mu_{\alpha}(p)|p|$ (see also \eqref{eq_explmobilitycomputah1}) and $c_{\varepsilon}:[0,+\infty)\to \mathbb{R}$ is defined by
\[
c_{\varepsilon}(t)=\frac{1}{2\varepsilon}\int_{2n\varepsilon}^{2(n+1)\varepsilon} g(\tau) \diff \tau
\]
if $t \in (2n\varepsilon, 2(n+1)\varepsilon]$ for each $n \in \mathbb{N}$. We also define
\[
H_{\alpha}(t, p, s)=\mu_{\alpha}(p)s+\Phi_{\alpha}(p)g(t).
\]
For fixed $p$ and $s$, we observe that $t \mapsto \int_{0}^{t}(H^{\varepsilon}_{\alpha}(\tau,p,s)-H_{\alpha}(\tau, p, s)) \diff \tau \to 0$ locally uniformly on $[0, +\infty)$ as $\varepsilon \to 0+$. Let $u_{0}:\mathbb{R}^{N}\to \mathbb{R}$ be a uniformly continuous function and construct the function $u_{\varepsilon}:\mathbb{R}^{N}\to \mathbb{R}$ as follows. Let $u_{\varepsilon}(\cdot, 0)=u_{0}(\cdot)$ in $\mathbb{R}^{N}$ and for each $n\in \mathbb{N}$, define $u_{\varepsilon}$ on $\mathbb{R}^{N}\times [n\varepsilon, (n+1)\varepsilon]$ as the unique viscosity solution of the equation
\begin{equation}\label{eq_viscsolutionsplit}
\partial_{t}u=H^{\varepsilon}_{\alpha}(t, Du, F_{\alpha}(D^{2}u, Du)) \,\ \text{in}\,\ \mathbb{R}^{N}\times [n\varepsilon, (n+1)\varepsilon]
\end{equation}
supplemented with the initial condition $u(\cdot, n\varepsilon)=u_{\varepsilon}(\cdot, n\varepsilon)$, where $F_{\alpha}(D^{2}u, Du)$ is defined in \eqref{eq_defofanisotropicmeancurvoppdevmodalpha}. 
\begin{prop}\label{prop approachingmcf}
Let $u_{0}:\mathbb{R}^{N} \to \mathbb{R}$ be uniformly continuous and $u$ be a unique viscosity solution of the equation \eqref{weighted MC equation} supplemented with the initial condition $u(\cdot, 0)=u_{0}(\cdot)$ in $\mathbb{R}^{N}$. Let $u_{\varepsilon}:\mathbb{R}^{N}\to \mathbb{R}$ be the uniformly continuous function such that $u_{\varepsilon}(\cdot, 0)=u_{0}(\cdot)$ in $\mathbb{R}^{N}$ and $u_{\varepsilon}$ is a unique viscosity solution of the equation \eqref{eq_viscsolutionsplit} for each $n \in \mathbb{N}$. Then $u_{\varepsilon}\to u$ locally uniformly on $\mathbb{R}^{N}\times [0,+\infty)$ as $\varepsilon \to 0+$. 
\end{prop}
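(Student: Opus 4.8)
The plan is to argue by the method of half-relaxed limits combined with a perturbed test function, thereby reducing the statement to the comparison principle for \eqref{weighted MC equation}.

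\textbf{Step 1 (reduction to a sub/supersolution property).} On each elementary interval $[n\varepsilon,(n+1)\varepsilon]$ the Hamiltonian $H^{\varepsilon}_{\alpha}(t,\cdot,\cdot)$ does not depend on $t$: indeed $c_{\varepsilon}$ is constant on each block $(2m\varepsilon,2(m+1)\varepsilon]$ and $\mathbbm{1}_{E_{\varepsilon}},\mathbbm{1}_{O_{\varepsilon}}$ are constant on each half-block. Hence $u_{\varepsilon}$ solves, alternately, the autonomous geometric equation $\partial_{t}u=2\mu_{\alpha}(Du)F_{\alpha}(D^{2}u,Du)$ on the ``$O_{\varepsilon}$-steps'' and the autonomous eikonal-type equation $\partial_{t}u=2c_{\varepsilon}\Phi_{\alpha}(Du)$ on the ``$E_{\varepsilon}$-steps''. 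Both equations enjoy a comparison principle and, on any $[0,T]$, standard a priori bounds (finite speed of propagation for the $E_{\varepsilon}$-steps; comparison with spatial translates and additive constants for both, using that $F_{\alpha}$ annihilates affine functions), so $(u_{\varepsilon})_{\varepsilon}$ is locally uniformly bounded. Therefore the half-relaxed limits $\overline{u}:=\limsup^{*}u_{\varepsilon}$ and $\underline{u}:=\liminf_{*}u_{\varepsilon}$ are finite, $\underline u\le\overline u$, and a short barrier argument near $t=0$ gives $\overline{u}(\cdot,0)=\underline{u}(\cdot,0)=u_{0}$. If $\overline{u}$ is a viscosity subsolution and $\underline{u}$ a viscosity supersolution of \eqref{weighted MC equation}, then the comparison principle forces $\overline{u}\le\underline{u}$, hence $\overline{u}=\underline{u}=:u$ is the unique solution and $u_{\varepsilon}\to u$ locally uniformly.

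\textbf{Step 2 (perturbed test function).} We prove the subsolution property of $\overline u$; the supersolution property of $\underline u$ is analogous. Let $\varphi\in C^{2}$ and let $(x_{0},t_{0})$, $t_{0}>0$, be a strict local maximum of $\overline{u}-\varphi$; assume first $p_{0}:=D\varphi(x_{0},t_{0})\neq0$ and set $M_{0}:=D^{2}\varphi(x_{0},t_{0})$, $I_{0}:=F_{\alpha}(M_{0},p_{0})$, $a_{0}:=\partial_{t}\varphi(x_{0},t_{0})$. Let $\psi_{\varepsilon}\in W^{1,\infty}_{\mathrm{loc}}$ be given by $\psi_{\varepsilon}(t_{0})=0$ and $\psi_{\varepsilon}'(t)=H^{\varepsilon}_{\alpha}(t,p_{0},I_{0})-H_{\alpha}(t,p_{0},I_{0})$; by the averaging observation recorded just before the statement, $\psi_{\varepsilon}\to0$ locally uniformly, and $\psi_{\varepsilon}$ is of class $C^{1}$ on each half-block. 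Put $\varphi_{\varepsilon}(x,t):=\varphi(x,t)+\psi_{\varepsilon}(t)$. By stability of strict maxima under this (purely temporal, uniformly small) perturbation, there are $(x_{\varepsilon},t_{\varepsilon})\to(x_{0},t_{0})$ at which $u_{\varepsilon}-\varphi_{\varepsilon}$ has a local maximum; passing to a subsequence, $t_{\varepsilon}$ lies in the closure of a half-block of fixed type (if $t_{\varepsilon}$ is a switching time, restrict to $\{t\le t_{\varepsilon}\}$ and use the step equation on the left), so that $\varphi_{\varepsilon}$ is an admissible test function for the corresponding autonomous equation at $(x_{\varepsilon},t_{\varepsilon})$. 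On an $O_{\varepsilon}$-step, since $\psi_{\varepsilon}'(t_{\varepsilon})=\mu_{\alpha}(p_{0})I_{0}-\Phi_{\alpha}(p_{0})g(t_{\varepsilon})$, the subsolution inequality gives
\[
a_{0}+o(1)+\mu_{\alpha}(p_{0})I_{0}-\Phi_{\alpha}(p_{0})g(t_{\varepsilon})\ \le\ 2\mu_{\alpha}(D\varphi(x_{\varepsilon},t_{\varepsilon}))F_{\alpha}(D^{2}\varphi(x_{\varepsilon},t_{\varepsilon}),D\varphi(x_{\varepsilon},t_{\varepsilon})),
\]
and letting $\varepsilon\to0$ yields $a_{0}\le\mu_{\alpha}(p_{0})(F_{\alpha}(M_{0},p_{0})+g(t_{0})|p_{0}|)$. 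On an $E_{\varepsilon}$-step, since $\psi_{\varepsilon}'(t_{\varepsilon})=2c_{\varepsilon}(t_{\varepsilon})\Phi_{\alpha}(p_{0})-\mu_{\alpha}(p_{0})I_{0}-\Phi_{\alpha}(p_{0})g(t_{\varepsilon})$, it gives
\[
a_{0}+o(1)-\mu_{\alpha}(p_{0})I_{0}-\Phi_{\alpha}(p_{0})g(t_{\varepsilon})\ \le\ 2c_{\varepsilon}(t_{\varepsilon})\big(\Phi_{\alpha}(D\varphi(x_{\varepsilon},t_{\varepsilon}))-\Phi_{\alpha}(p_{0})\big);
\]
since $t_{\varepsilon}\to t_{0}$ the block defining $c_{\varepsilon}(t_{\varepsilon})$ shrinks to $t_{0}$, so $c_{\varepsilon}(t_{\varepsilon})\to g(t_{0})$ while $\Phi_{\alpha}(D\varphi(x_{\varepsilon},t_{\varepsilon}))\to\Phi_{\alpha}(p_{0})$, the right-hand side tends to $0$, and one again obtains $a_{0}\le\mu_{\alpha}(p_{0})(F_{\alpha}(M_{0},p_{0})+g(t_{0})|p_{0}|)$. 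In either case this is exactly \eqref{def ofsubsolwmc new}.

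\textbf{Step 3 (degenerate gradient and conclusion).} If $D\varphi(x_{0},t_{0})=0$ and $D^{2}\varphi(x_{0},t_{0})=0$, then by Theorem~\ref{thm equivdefvs} it suffices to show $\partial_{t}\varphi(x_{0},t_{0})\le0$, which follows from the same perturbed test function with $I_{0}=0$ (the semicontinuous envelopes of $F_{\alpha}$ vanish at $(0,0)$ since $\mathcal{A}$ is bounded), distinguishing whether $D\varphi(x_{\varepsilon},t_{\varepsilon})$ vanishes along the sequence or not, exactly as in the degenerate cases of Propositions~\ref{propconsistency1}--\ref{propconsistency2} and of \cite{Barles-Georgelin}. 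Having established both one-sided inequalities together with the matching initial data, the comparison principle for \eqref{weighted MC equation} gives $\overline{u}=\underline{u}=u$, proving the claim. I expect the main obstacle to be bookkeeping rather than analysis — the delicate curvature computations already reside in Lemma~\ref{lemcomputderivative01} and Propositions~\ref{propconsistency1}--\ref{propconsistency2} and play no role here — namely keeping careful track of the $E_{\varepsilon}/O_{\varepsilon}$ half-block structure so that $\varphi_{\varepsilon}$ stays an admissible test function when $t_{\varepsilon}$ falls on a switching time, and justifying the cancellation, on the $E_{\varepsilon}$-steps, of the oscillating term $2c_{\varepsilon}(t_{\varepsilon})\Phi_{\alpha}$.
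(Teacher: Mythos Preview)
Your proof is correct and follows essentially the same perturbed test function argument as the paper (which the authors attribute to \cite{Barles_2006}). The differences are cosmetic: the paper uses the assumed uniform continuity of $u_{\varepsilon}$ to extract a locally uniformly convergent subsequence $u_{\varepsilon}\to v$ (instead of half-relaxed limits), and, after writing the viscosity inequality as $\partial_{t}\varphi(x_{\varepsilon},t_{\varepsilon})\le H_{\alpha}(t_{\varepsilon},p_{0},I_{0})+\bigl[H^{\varepsilon}_{\alpha}(t_{\varepsilon},D\varphi(x_{\varepsilon},t_{\varepsilon}),\ldots)-H^{\varepsilon}_{\alpha}(t_{\varepsilon},p_{0},I_{0})\bigr]$, passes to the limit in the bracket directly rather than splitting into your $E_{\varepsilon}/O_{\varepsilon}$ cases; your case split is exactly what makes that bracket vanish.
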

\begin{proof}
Since $u_{\varepsilon}$ is uniformly continuous, up to a subsequence (not relabeled), we can assume that $u_{\varepsilon}(x,t)\to v(x,t)$ locally uniformly as $\varepsilon \to 0+$  for some $v \in C(\mathbb{R}^{N} \times [0,+\infty))$. We shall prove that $v$ is a viscosity solution of the equation \eqref{weighted MC equation} supplemented with the initial condition $v(\cdot, 0)=u_{0}(\cdot)$ in $\mathbb{R}^{N}$. Since the latter admits a unique viscosity solution $u$, $v$ does not depend on a subsequence and $u_{\varepsilon}\to u=v$ locally uniformly as $\varepsilon \to 0+$.

We shall only prove that $v$ is a subsolution of \eqref{weighted MC equation} (see Definition~\ref{def ofsubsolwmc} and Theorem~\ref{thm equivdefvs}), since the proof that $v$ is a supersolution of \eqref{weighted MC equation} is similar. We follow the strategy of \cite{Barles_2006}. 

Let $\varphi$ be a smooth test function and $(x_{0}, t_{0}) \in \mathbb{R}^{N}\times (0,+\infty)$ be a strict global maximum of $v-\varphi$. Assume that $|D\varphi(x_{0}, t_{0})|\neq 0$. We define
\begin{align*}
\psi_{\varepsilon}(t)&=H_{\alpha}^{\varepsilon}(t, D\varphi(x_{0}, t_{0}), F_{\alpha}(D^{2}\varphi(x_{0}, t_{0}),D\varphi(x_{0}, t_{0})))\\ & \,\ \,\ \,\ -H_{\alpha}(t, D\varphi(x_{0}, t_{0}), F_{\alpha}(D^{2}\varphi(x_{0}, t_{0}),D\varphi(x_{0}, t_{0})))
\end{align*}
and observe that $\int^{t}_{0}\psi_{\varepsilon}(\tau) \diff \tau\to 0$ locally uniformly as $\varepsilon \to 0+$. This, since $u_{\varepsilon}\to v$ locally uniformly as $\varepsilon \to 0+$, implies that $u_{\varepsilon}(x,t)-\int_{0}^{t}\psi_{\varepsilon}(\tau)\diff \tau \to v(x,t)$ uniformly on compact subsets of $\mathbb{R}^{N}\times [0, +\infty)$. Then there exist points $(x_{\varepsilon}, t_{\varepsilon}) \in \mathbb{R}^{N}\times (0,+\infty)$ of global maximum of the function $u_{\varepsilon}(x,t)-\int_{0}^{t}\psi_{\varepsilon}(\tau)\diff \tau -\varphi(x,t)$ such that $(x_{\varepsilon}, t_{\varepsilon}) \to (x_{0}, t_{0})$ as $\varepsilon \to 0+$. If $t_{\varepsilon}/\varepsilon \in \mathbb{N}$, then, since $u_{\varepsilon}$ is a viscosity solution and $|D\varphi(x_{\varepsilon}, t_{\varepsilon})|\neq 0$ for each sufficiently small $\varepsilon>0$, 
\begin{equation}\label{eq_uhguhuh3ht9783h8934}
\partial_{t}\varphi(x_{\varepsilon}, t_{\varepsilon})+\psi_{\varepsilon}(t_{\varepsilon}) \leq H_{\alpha}^{\varepsilon}(t_{\varepsilon}, D\varphi(x_{\varepsilon}, t_{\varepsilon}), F_{\alpha}(D^{2}\varphi(x_{\varepsilon}, t_{\varepsilon}),D\varphi(x_{\varepsilon}, t_{\varepsilon}))).
\end{equation}
If $t_{\varepsilon}/\varepsilon \in \mathbb{N}$, then replacing $\psi_{\varepsilon}$ and $H_{\alpha}^{\varepsilon}$ by their left limits and taking into account \cite{Ishii1985}, we observe that \eqref{eq_uhguhuh3ht9783h8934} still holds. 
Thus, we have
\begin{align*}
\partial_{t}\varphi(x_{\varepsilon}, t_{\varepsilon}) &\leq H_{\alpha}(t_{\varepsilon}, D\varphi(x_{0}, t_{0}), F_{\alpha}(D^{2}\varphi(x_{0}, t_{0}), D\varphi(x_{0}, t_{0}))) \\ & \,\ \,\ \,\ + H_{\alpha}^{\varepsilon}(t_{\varepsilon}, D\varphi(x_{\varepsilon}, t_{\varepsilon}), F_{\alpha}(D^{2}\varphi(x_{\varepsilon}, t_{\varepsilon}), D\varphi(x_{\varepsilon}, t_{\varepsilon})))\\ & \,\ \,\ \,\ \,\ \,\ \,\ -H_{\alpha}^{\varepsilon}(t_{\varepsilon}, D\varphi(x_{0}, t_{0}), F_{\alpha}(D^{2}\varphi(x_{0}, t_{0}), D\varphi(x_{0}, t_{0}))).
\end{align*}
Letting $\varepsilon \to 0+$ and observing that 
\begin{align*}
&H_{\alpha}^{\varepsilon}(t_{\varepsilon}, D\varphi(x_{\varepsilon}, t_{\varepsilon}), F_{\alpha}(D^{2}\varphi(x_{\varepsilon}, t_{\varepsilon}), D\varphi(x_{\varepsilon}, t_{\varepsilon})))\\ & \,\ \,\ \,\ -H_{\alpha}^{\varepsilon}(t_{\varepsilon}, D\varphi(x_{0}, t_{0}), F_{\alpha}(D^{2}\varphi(x_{0}, t_{0}), D\varphi(x_{0}, t_{0}))) \to 0, 
\end{align*}
we obtain
\begin{equation}\label{eq_ineqsubsol11}
\partial_{t}\varphi(x_{0}, t_{0})\leq H_{\alpha}(t_{0}, D\varphi(x_{0}, t_{0}), F_{\alpha}(D^{2}\varphi(x_{0}, t_{0}), D\varphi(x_{0}, t_{0}))),
\end{equation}
since $\varphi$ is smooth and $H_{\alpha}(\cdot, D\varphi(x_{0}, t_{0}), F_{\alpha}(D^{2}\varphi(x_{0}, t_{0}), D\varphi(x_{0}, t_{0})))$ is continuous. 

Next, assume that $|D\varphi(x_{0}, t_{0})|=0$ and $D^{2}\varphi(x_{0}, t_{0})=0$. Let $(x_{\varepsilon}, t_{\varepsilon})$ be a global maximum of $u_{\varepsilon}-\varphi$ such that $(x_{\varepsilon}, t_{\varepsilon})\to  (x_{0}, t_{0})$ as $\varepsilon \to 0+$. Since $u_{\varepsilon}$ is a viscosity solution, we have
\[
\partial_{t} \varphi(x_{\varepsilon}, t_{\varepsilon}) \leq [H^{\alpha}_{\varepsilon}]^{*}(t_{\varepsilon}, D\varphi(x_{\varepsilon}, t_{\varepsilon}), F_{\alpha}(D^{2}\varphi(x_{\varepsilon}, t_{\varepsilon}),  D\varphi(x_{\varepsilon}, t_{\varepsilon}))) \to 0
\]
as $\varepsilon \to 0+$, in view of the definition of $H^{\alpha}_{\varepsilon}$. Thus, $\partial_{t}\varphi(x_{0}, t_{0})\leq 0$ as desired. This, together with \eqref{eq_ineqsubsol11} and Theorem~\ref{thm equivdefvs}, implies that $v$ is a viscosity subsolution of \eqref{weighted MC equation} and completes our proof of Proposition~\ref{prop approachingmcf}.
\end{proof}
\section{Geometric uniqueness in the convex case}
In this section, we obtain the estimate (see Proposition~\ref{prop flowdistanceestim}) of the distance between two generalized evolutions with different external forces. Using this estimate, we prove that if the initial set $\Omega_{0}$ is convex and bounded, then the evolution is unique. The proof of the uniqueness is based on \cite[Theorem~8.4]{Bellettini_Caselles_Chambolle_Novaga}. Thus, we provide a different proof of the uniqueness of the evolution of a convex bounded set than in \cite{Barles-Soner-Souganidis}, namely, where the proof is based on the use of the comparison principle. In general, the inclusion principle and the uniqueness of evolutions follow from the scheme and the comparison principle (see Remark~\ref{remark about IP}). It is worth noting that a counterpart of Proposition~\ref{prop flowdistanceestim} was established earlier in \cite{Chambolle_Novaga_Ruffini_2017} in the context of a \emph{nonlocal} anisotropic mean curvature flow and with a different initial condition (see \cite[Proposition~6.2]{Chambolle_Novaga_Ruffini_2017}).  

We recall that the mobility $\Phi_{\alpha}$ is a norm on $\mathbb{R}^{N}$ (see Lemma~\ref{lem convexitymobility}). For each $\alpha \in [1,2)$, we shall consider the distance $\dist_{\Phi^{\circ}_{\alpha}}$ on $\mathbb{R}^{N}$ induced by the dual norm \[\Phi^{\circ}_{\alpha}(x)=\sup\{\langle \xi, x \rangle: \Phi_{\alpha}(\xi)\leq 1\}\] of $\Phi_{\alpha}$. Given $\eta>0$, $x \in \mathbb{R}^{N}$ and $E\subset \mathbb{R}^{N}$, we define \[d^{\eta}_{\partial E}(x)=-\eta \vee(\eta\wedge(-\mathrm{dist}_{\Phi^{\circ}_{\alpha}}(x,E)+\mathrm{dist}_{\Phi^{\circ}_{\alpha}}(x, E^{c}))),\] so that $d^{\eta}_{\partial E}(x)=\eta \wedge \dist_{\Phi^{\circ}_{\alpha}}(x, E^{c})$ if $x \in E$ and $d^{\eta}_{\partial E}(x)=-\eta \vee -\dist_{\Phi^{\circ}_{\alpha}}(x, E)$ if $x \in E^{c}$. In particular, $d^{\eta}_{\partial E}(x)=0$ if $x \in \partial E$.
\begin{lemma}\label{lem estimondistdual1}
Let $\alpha \in [1,2)$ and $E_{1}\subset E_{2}$ be two nonempty subsets of $\mathbb{R}^{N}$. Let $c_{i} \in \mathbb{R}$ and $E_{i}(t)$ be the evolution of the flow $v_{i}(p)=c_{i}\Phi_{\alpha}(p)$ such that $E_{i}(0)=E_{i}$ for each $i \in \{1,2\}$, which means that $E_{i}(t)=\{x \in \mathbb{R}^{N}: u_{i}(x,t)\geq 0\}$, where $u_{i}:\mathbb{R}^{N}\to \mathbb{R}$ is a unique viscosity solution to the problem
\[
\begin{cases}
\partial_{t}u_{i}= c_{i}\Phi_{\alpha}(Du_{i}),\\
u_{i}(x,0)=d^{\eta}_{\partial E_{i}}(x).
\end{cases}
\]
Assume that $\dist_{\Phi^{\circ}_{\alpha}}(\partial E_{1}, \partial E_{2})>0$.  Then the function
\[
\delta(t)=\mathrm{dist}_{\Phi^{\circ}_{\alpha}}(\partial E_{1}(t), \partial E_{2}(t))
\]
satisfies
\[
\delta(t)\geq \delta(0) + (c_{2}-c_{1})t
\]
for each $t \in [0,t_{s}]$, where $t_{s}=\inf\{\tau > 0: \delta(\tau)=0\}$. 
\end{lemma}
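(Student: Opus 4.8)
The plan is to compute the two level-set evolutions explicitly and then reduce the statement to a purely metric comparison between the signed distance functions of $E_1$ and $E_2$. Throughout, write $s_i$ for the signed distance to $\partial E_i$ with respect to the dual norm $\Phi^{\circ}_{\alpha}$, with the convention $s_i>0$ inside $E_i$, so that $d^{\eta}_{\partial E_i}=(-\eta)\vee(\eta\wedge s_i)$. The equation $\partial_t u_i=c_i\Phi_{\alpha}(Du_i)$ prescribes a constant anisotropic normal velocity $c_i$, and its unique viscosity evolution coincides with the geometric one: $E_i(t)$ is the $\Phi^{\circ}_{\alpha}$-dilation of $E_i$ by $c_it$ when $c_i\ge 0$ and its erosion by $|c_i|t$ when $c_i<0$; no fattening occurs, and
\[
\partial E_i(t)=\{x\in\mathbb{R}^N:\ s_i(x)=-c_it\}.
\]
(Here $\Phi_{\alpha}=(\Phi^{\circ}_{\alpha})^{\circ}$, $\Phi_{\alpha}$ being a norm by Lemma~\ref{lem convexitymobility}, and $s_i$ is $1$-Lipschitz with respect to $\Phi^{\circ}_{\alpha}$; these facts, together with the no-fattening statement for constant normal speed, are standard.) Thus for $t\in[0,t_s]$ both $\partial E_i(t)$ are nonempty and we must estimate $\dist_{\Phi^{\circ}_{\alpha}}$ between the level sets $\{s_1=-c_1t\}$ and $\{s_2=-c_2t\}$.

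The geometric core is the claim that $s_2(x)-s_1(x)\ge\delta(0)$ for every $x\in\mathbb{R}^N$, where $\delta(0)=\dist_{\Phi^{\circ}_{\alpha}}(\partial E_1,\partial E_2)$. I would first note that $E_1\subset E_2$ together with $\delta(0)>0$ forces $\overline{E_1}\subset E_2$ and $\dist_{\Phi^{\circ}_{\alpha}}(\overline{E_1},E_2^c)\ge\delta(0)$. Then I argue by cases: for $x\in E_1$, choosing $q\in E_2^c$ with $\Phi^{\circ}_{\alpha}(x-q)=\dist_{\Phi^{\circ}_{\alpha}}(x,E_2^c)=s_2(x)$ and splitting the segment $[x,q]$ at a crossing point of $\partial E_1$ and a later crossing point of $\partial E_2$ (segments being $\Phi^{\circ}_{\alpha}$-geodesics, consecutive pieces add up, and the middle piece has $\Phi^{\circ}_{\alpha}$-length $\ge\delta(0)$) gives $s_2(x)\ge s_1(x)+\delta(0)$; the case $x\in E_2^c$ is symmetric; and for $x$ in the collar $E_2\setminus\overline{E_1}$ one uses instead the triangle inequality $\delta(0)\le\dist_{\Phi^{\circ}_{\alpha}}(\overline{E_1},E_2^c)\le\dist_{\Phi^{\circ}_{\alpha}}(x,\overline{E_1})+\dist_{\Phi^{\circ}_{\alpha}}(x,E_2^c)=-s_1(x)+s_2(x)$. (This step is immediate when $E_1,E_2$ are convex, which is the situation of interest in this section.)

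With the comparison in hand the conclusion is short. Fix $t\in[0,t_s]$ and take any $x\in\partial E_1(t)$ and $y\in\partial E_2(t)$, so that $s_1(x)=-c_1t$ and $s_2(y)=-c_2t$. Using that $s_2$ is $1$-Lipschitz for $\Phi^{\circ}_{\alpha}$ and then the geometric claim,
\[
\Phi^{\circ}_{\alpha}(x-y)\ \ge\ s_2(x)-s_2(y)\ \ge\ \bigl(s_1(x)+\delta(0)\bigr)-s_2(y)\ =\ -c_1t+\delta(0)+c_2t\ =\ \delta(0)+(c_2-c_1)t.
\]
Taking the infimum over $x\in\partial E_1(t)$ and $y\in\partial E_2(t)$ yields $\delta(t)\ge\delta(0)+(c_2-c_1)t$, as claimed.

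The main obstacle is not the final chain of inequalities but the two preparatory facts: identifying the viscosity evolution of $\partial_t u=c_i\Phi_{\alpha}(Du)$ with the $\Phi^{\circ}_{\alpha}$-dilation/erosion and checking that the interface is exactly $\{s_i=-c_it\}$ (well-posedness and absence of fattening for constant anisotropic normal velocity), and establishing the metric comparison $s_2-s_1\ge\delta(0)$; both are standard but require some care for general, non-convex $E_1,E_2$.
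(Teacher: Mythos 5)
Your proof is correct, and while it rests on the same two pillars as the paper's argument---the Hopf--Lax identification of the evolution of $\partial_{t}u_{i}=c_{i}\Phi_{\alpha}(Du_{i})$ with the $\Phi^{\circ}_{\alpha}$-dilation/erosion of $E_{i}$, and triangle inequalities along segments (geodesics for $\dist_{\Phi^{\circ}_{\alpha}}$) split at their crossing points with $\partial E_{1}$ and $\partial E_{2}$---you organize the comparison step genuinely differently. The paper distinguishes the three sign configurations of $(c_{1},c_{2})$ and, in each case, picks a pair $(x_{1},x_{2})$ on the evolved boundaries \emph{realizing} $\delta(t)$ before running the segment-splitting argument on that pair. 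You instead establish once and for all the pointwise inequality $s_{2}-s_{1}\geq\delta(0)$ between the signed $\Phi^{\circ}_{\alpha}$-distances and then conclude for \emph{arbitrary} $x\in\partial E_{1}(t)$, $y\in\partial E_{2}(t)$ using only that $s_{2}$ is $1$-Lipschitz for $\Phi^{\circ}_{\alpha}$ and that $\partial E_{i}(t)\subset\{s_{i}=-c_{i}t\}$. This buys two things: the sign case analysis disappears (the level-set description covers dilation and erosion uniformly), and you never need the infimum defining $\delta(t)$ to be attained, which for general unbounded sets it need not be---the paper's selection of a minimizing pair and of ``the unique'' intersection point $\xi$ is slightly informal on exactly this point, whereas your final infimum over all pairs is clean. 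The price is the extra collar case $x\in E_{2}\setminus\overline{E_{1}}$ in the pointwise comparison, which you handle correctly via $\dist_{\Phi^{\circ}_{\alpha}}(\overline{E_{1}},E_{2}^{c})\geq\delta(0)$; note also that only the inclusion $\partial E_{i}(t)\subset\{s_{i}=-c_{i}t\}$, not the equality you state, is actually needed, and that inclusion is the easy direction (boundary points of the open superlevel set $\{s_{i}>-c_{i}t\}$ lie on the level set by continuity of $s_{i}$). Your parenthetical suggesting convexity of the $E_{i}$ is needed for the comparison is unnecessary: the segment argument you sketch works for arbitrary nonempty sets, as does the paper's.
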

\begin{proof}
First, we assume that $c_{1}, c_{2} \leq 0$. According to the Hopf-Lax formula for the Hamiltonian $H_{i}(p)=|c_{i}|\Phi_{\alpha}(p)$, for each $i \in \{1,2\}$, the solution of the system
\[
\begin{cases}
\partial_{t}u_{i}(x, t) + |c_{i}|\Phi_{\alpha}(Du(x,t))=0,\\
u_{i}(x,0)=d^{\eta}_{\partial E_{i}}(x),
\end{cases}
\]
is given by 
\[
u_{i}(x,t)=\inf_{y \in \mathbb{R}^{N}}\left\{d^{\eta}_{\partial E_{i}}(y)+tH^{*}_{i}\left(\frac{x-y}{t}\right)\right\},
\]
where $H^{*}_{i}$ denotes the Legendre-Fenchel conjugate of $H_{i}$, namely
\[
H^{*}_{i}(\xi)=\begin{cases}
0 \,\  & \text{if}\,\ \Phi^{\circ}_{\alpha}(\xi)\leq |c_{i}|, \\
+\infty \,\ & \text{otherwise}
\end{cases}
\]
(see, for instance, \cite{Bardi_Evans}).
Thus, 
\begin{equation}\label{eq_formulaforviscsolutionhlah}
u_{i}(x,t)=\inf\{d^{\eta}_{\partial E_{i}}(y): \Phi^{\circ}_{\alpha}(y-x) \leq |c_{i}|t,\,\ y \in \mathbb{R}^{N}\}.
\end{equation}
In view of \eqref{eq_formulaforviscsolutionhlah} and the fact that $d^{\eta}_{\partial E_{i}}(y)\leq 0$ for each $y \in E_{i}^{c}$, we have the following
\begin{equation}\label{eq_supelevelusetuniq1}
\{x \in \mathbb{R}^{N}: u_{i}(x,t)>0\}=\{x \in E_{i}:  \mathrm{\dist}_{\Phi^{\circ}_{\alpha}}(x, \partial E_{i}) > |c_{i}|t\}.
\end{equation}
Let $t<t_{s}$ and $x_{i} \in \partial \{x \in \mathbb{R}^{N}: u_{i}(x,t)>0\}$, $i \in \{1,2\}$ satisfy $\delta(t)=\Phi^{\circ}_{\alpha}(x_{1}-x_{2})$. Denote by $\xi$ the unique point of the intersection of $\partial E_{1}$ and $[x_{1}, x_{2}]$. Let $z$ be the projection of $x_{2}$ onto $\partial E_{2}$. Then $\Phi^{\circ}_{\alpha}(z-x_{2})=|c_{2}|t$ and the following holds
\begin{align*}
\delta(t)&=\Phi^{\circ}_{\alpha}(x_{1}-x_{2})\\
&=\Phi^{\circ}_{\alpha}(x_{2}-\xi)+\Phi^{\circ}_{\alpha}(\xi-x_{1})\\
&\geq \Phi^{\circ}_{\alpha}(x_{2}-\xi)+|c_{1}|t\\
&\geq \Phi^{\circ}_{\alpha}(z-\xi)-\Phi^{\circ}_{\alpha}(z-x_{2})+ |c_{1}|t\\
&\geq \delta(0)-|c_{2}|t+|c_{1}|t\\
&=\delta(0)+(c_{2}-c_{1})t.
\end{align*}
This proves Lemma~\ref{lem estimondistdual1} in the case where $c_{1}, c_{2} \leq 0$. The proof in the case where $c_{1}, c_{2} \geq 0$ is similar. Indeed, if $c_{1}, c_{2} \geq 0$ and $u_{i}$ is a viscosity solution of the equation $\partial_{t}u-|c_{i}|\Phi_{\alpha}(Du)=0$ supplemented with the initial condition $u_{i}(x,0)=d^{\eta}_{\partial E_{i}}(x)$, then $v_{i}=-u_{i}$ is a viscosity solution of the equation $\partial_{t}v+|c_{i}|\Phi_{\alpha}(-Dv)=0$ supplemented with the initial condition $v_{i}=-d^{\eta}_{\partial E_{i}}$. In this case, the set $\{x \in \mathbb{R}^{N} :u_{i}(x,t)>0\}$ is the interior of the set $\{x \in \mathbb{R}^{N}: v_{i}(x,t)>0\}^{c}$. If $c_{1}<0$ and $c_{2}>0$, reasoning similarly, we have
\[
E_{1}(t)=\{x \in E_{1}: \mathrm{dist}_{\Phi^{\circ}_{\alpha}}(x, \partial E_{1})\geq |c_{1}|t\}\,\ \text{and}\,\ E_{2}(t)=\{x \in \mathbb{R}^{N}: \mathrm{dist}_{\Phi^{\circ}_{\alpha}}(x, E_{2})\leq c_{2}t\}.
\]
Let $x_{1} \in \partial E_{1}(t)$ and $x_{2} \in \partial E_{2}(t)$ be such that $\delta(t)=\Phi^{\circ}_{\alpha}(x_{1}-x_{2})$. Denote by $y_{i} \in \partial E_{i}$ the unique point of the intersection of $\partial E_{i}$ and $[x_{1}, x_{2}]$ for each $i \in \{1,2\}$. Since 
\begin{align*}
\Phi^{\circ}_{\alpha}(x_{1}-x_{2})&= \Phi^{\circ}_{\alpha}(x_{2}-y_{2})+\Phi^{\circ}_{\alpha}(y_{2}-y_{1})+\Phi^{\circ}_{\alpha}(y_{1}, x_{1}) \\
&\geq \delta(0)+|c_{2}|t+|c_{1}|t\\
&=\delta(0)+(c_{2}-c_{1})t,
\end{align*}
we deduce that $\delta(t)\geq \delta(0)+(c_{2}-c_{1})t$. This completes our proof of  Lemma~\ref{lem estimondistdual1}.
\end{proof}
\begin{prop}\label{prop flowdistanceestim}
Let $\alpha \in [1,2)$, $g_{1}, g_{2} \in C([0,+\infty))$, $E_{1}\subset E_{2}$ be two nonempty subsets of $\mathbb{R}^{N}$ and for each $i \in \{1,2\}$, $u_{i}$ be a unique viscosity solution to the problem
\[
\begin{cases}
\partial_{t}u=\mu_{\alpha}(Du)F_{\alpha}(D^{2}u, Du)+\Phi_{\alpha}(Du)g_{i},\\
u(x, 0)=d^{\eta}_{\partial E_{i}}(x).
\end{cases}
\]
Let for each $t \in [0,+\infty)$, $E_{i}(t)=\{x \in \mathbb{R}^{N}: u_{i}(x,t)\geq 0\}$. Assume that $\dist_{\Phi^{\circ}_{\alpha}}(\partial E_{1}, \partial E_{2})>0$. Then the function $\delta(t)=\dist_{\Phi^{\circ}_{\alpha}}(\partial E_{1}(t), \partial E_{2}(t))$ satisfies
\[
\delta(t)\geq \delta(0)+\int_{0}^{t}(g_{2}(\tau)-g_{1}(\tau)) \diff \tau
\]
for each $t \in [0, t_{s}]$, where $t_{s}=\inf\{\tau > 0: \delta(\tau)=0\}$.
\end{prop}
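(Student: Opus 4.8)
The plan is to reduce the estimate to the splitting scheme of Section~\ref{Section 6} (Proposition~\ref{prop approachingmcf}) and then to handle separately its two elementary building blocks: the pure (unforced) anisotropic mean curvature steps, along which the $\Phi^{\circ}_{\alpha}$-distance between the two fronts cannot decrease, and the pure forcing steps, which are exactly the eikonal flows treated in Lemma~\ref{lem estimondistdual1}. Concretely, for $i\in\{1,2\}$ let $u_{i}^{\varepsilon}$ solve the split equation \eqref{eq_viscsolutionsplit} with force $g_{i}$ and initial datum $d^{\eta}_{\partial E_{i}}$, put $E_{i}^{\varepsilon}(t)=\{u_{i}^{\varepsilon}(\cdot,t)>0\}$ and $\delta^{\varepsilon}(t)=\dist_{\Phi^{\circ}_{\alpha}}(\partial E_{1}^{\varepsilon}(t),\partial E_{2}^{\varepsilon}(t))$; note $\delta^{\varepsilon}(0)=\delta(0)>0$ by hypothesis. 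By Proposition~\ref{prop approachingmcf}, $u_{i}^{\varepsilon}\to u_{i}$ locally uniformly as $\varepsilon\to0+$, so it suffices to prove that, as long as $\delta^{\varepsilon}>0$,
\[
\delta^{\varepsilon}(t)\ \geq\ \delta(0)+\int_{0}^{t}(g_{2}(\tau)-g_{1}(\tau))\,d\tau-2\varepsilon\|g_{2}-g_{1}\|_{\infty},
\]
and then to let $\varepsilon\to0+$ (see the last paragraph for the subtleties of this limit).

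First I would treat the mean curvature steps. On each interval contained in $O_{\varepsilon}$ both $u_{1}^{\varepsilon}$ and $u_{2}^{\varepsilon}$ solve the same equation $\partial_{t}u=2\mu_{\alpha}(Du)F_{\alpha}(D^{2}u,Du)$, which is geometric, translation invariant and obeys the comparison principle. The point is the elementary fact that, for nonempty open sets $F\subset G$ and $d>0$, one has $\dist_{\Phi^{\circ}_{\alpha}}(\partial F,\partial G)\geq d$ if and only if $F+v\subset G$ for every $v$ with $\Phi^{\circ}_{\alpha}(v)<d$: indeed, if $x\in F$, $\Phi^{\circ}_{\alpha}(v)<d$ and $x+v\notin G$, then (since $F\subset G$) the segment $[x,x+v]$ meets $\partial F$ at some $w$ and $\partial G$ at some $z$ with $w$ lying between $x$ and $z$, whence $\Phi^{\circ}_{\alpha}(v)\geq\Phi^{\circ}_{\alpha}(z-w)\geq\dist_{\Phi^{\circ}_{\alpha}}(\partial F,\partial G)$, a contradiction; the converse inclusion is immediate. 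Applying this at the left endpoint $t_{n}$ of the interval with $d=\delta^{\varepsilon}(t_{n})$, translating, and invoking the comparison principle together with translation invariance, we get $E_{1}^{\varepsilon}(t)+v\subset E_{2}^{\varepsilon}(t)$ for all such $v$ and all $t$ in the interval, hence $\delta^{\varepsilon}(t)\geq\delta^{\varepsilon}(t_{n})$; in particular $\delta^{\varepsilon}$ does not decrease on $O_{\varepsilon}$ and nestedness is preserved.

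Next, the forcing steps. On each interval $(2n\varepsilon,(2n+1)\varepsilon]\subset E_{\varepsilon}$ the function $u_{i}^{\varepsilon}$ solves $\partial_{t}u=2c_{\varepsilon}^{(i)}\Phi_{\alpha}(Du)$ with the constant $c_{\varepsilon}^{(i)}=\frac{1}{2\varepsilon}\int_{2n\varepsilon}^{2(n+1)\varepsilon}g_{i}$; this is the flow $v_{i}(p)=2c_{\varepsilon}^{(i)}\Phi_{\alpha}(p)$ of Lemma~\ref{lem estimondistdual1}, whose hypotheses involve only the superlevel sets and not the particular initial profile, so the lemma applies with $E_{i}^{\varepsilon}(2n\varepsilon)$ in place of $E_{i}$. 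It yields, while $\delta^{\varepsilon}>0$,
\[
\delta^{\varepsilon}\big((2n+1)\varepsilon\big)-\delta^{\varepsilon}(2n\varepsilon)\ \geq\ 2\big(c_{\varepsilon}^{(2)}-c_{\varepsilon}^{(1)}\big)\varepsilon\ =\ \int_{2n\varepsilon}^{2(n+1)\varepsilon}(g_{2}-g_{1})\,d\tau.
\]
Adding this to the mean curvature estimate over the full period $(2n\varepsilon,2(n+1)\varepsilon]$ and telescoping over the periods contained in $[0,t]$ gives the displayed bound for $\delta^{\varepsilon}$, the error term absorbing the incomplete period at the right end; letting $\varepsilon\to0+$ produces $\delta(t)\geq\delta(0)+\int_{0}^{t}(g_{2}-g_{1})$ for $t\in[0,t_{s}]$, the endpoint $t_{s}$ being included by continuity.

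The hard part is the two bookkeeping issues hidden in the limit $\varepsilon\to0+$. First, local uniform convergence $u_{i}^{\varepsilon}\to u_{i}$ controls the superlevel sets from inside but, a priori, not their topological boundaries; one must therefore justify that $\dist_{\Phi^{\circ}_{\alpha}}(\partial E_{1}^{\varepsilon}(t),\partial E_{2}^{\varepsilon}(t))\to\dist_{\Phi^{\circ}_{\alpha}}(\partial E_{1}(t),\partial E_{2}(t))$ whenever $\delta(t)>0$, which is where the specific structure of the evolution (and, if necessary, the absence of fattening) enters. Second, one has to check that the split evolutions remain nested and that $\delta^{\varepsilon}$ stays positive on a time interval exhausting $[0,t_{s}]$ as $\varepsilon\to0+$; this follows from the lower bound itself together with the continuity of $t\mapsto\delta(t)$ by a standard continuation argument. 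Everything else is routine.
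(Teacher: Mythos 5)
Your proposal is correct and follows essentially the same route as the paper: reduce to the split scheme of Section~\ref{Section 6} via Proposition~\ref{prop approachingmcf}, apply Lemma~\ref{lem estimondistdual1} on the forcing intervals, use monotonicity of the $\Phi^{\circ}_{\alpha}$-distance under the geometric translation-invariant curvature steps, telescope, and pass to the limit $\varepsilon\to0+$. The two subtleties you flag at the end (convergence of the boundary distances and preservation of nestedness/positivity) are precisely the points the paper handles by assuming at the outset that the fronts do not develop an interior and by asserting $\delta_{\varepsilon}(t)\to\delta(t)$ directly from Proposition~\ref{prop approachingmcf}.
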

\begin{proof}
Without loss of generality, we can assume that $\partial E_{i}(t)=\partial \{x \in \mathbb{R}^{N}: u_{i}(x,t)<0\}$, namely, that the front does not develop an interior. 
For each $i \in \{1,2\}$, let $u_{\varepsilon, i}$ and $c_{\varepsilon, i}$ be the functions defined in Section~\ref{Section 6} for $g=g_{i}$. Denote $E_{\varepsilon, i}(t)=\{x \in \mathbb{R}^{N}: u_{\varepsilon, i}(x,t) \geq 0\}$ and $\delta_{\varepsilon}(t)=\dist_{\Phi^{\circ}_{\alpha}}(\partial E_{\varepsilon, 1}(t), \partial E_{\varepsilon, 2}(t))$. By Proposition~\ref{prop approachingmcf}, $\delta_{\varepsilon}(t)\to \delta(t)$ for each $t \in [0,t_{s})$. Fix $t \in [0,t_{s})$ and define $n_{*}=\max\{n\in \mathbb{N}: n\varepsilon <t\}$. Then the following holds
\begin{equation}\label{eq_representationofdistance}
\delta_{\varepsilon}(t)=\delta_{\varepsilon}(0)+(\delta_{\varepsilon}(\varepsilon)-\delta_{\varepsilon}(0))+(\delta_{\varepsilon}(2\varepsilon)-\delta_{\varepsilon}(\varepsilon))+(\delta_{\varepsilon}(3\varepsilon)-\delta_{\varepsilon}(2\varepsilon))+\dotsc+(\delta_{\varepsilon}(t)-\delta_{\varepsilon}(n_{*}\varepsilon)).
\end{equation}
Since the $u_{\varepsilon,i}$'s solve in $(0, \varepsilon]$ the equation $\partial_{t} u=2\Phi_{\alpha}(Du)c_{\varepsilon, i}$, then, according to Lemma~\ref{lem estimondistdual1}, 
\begin{equation}\label{eq_evolutlemmnormalforce}
\delta_{\varepsilon}(\varepsilon)\geq \delta_{\varepsilon}(0)+2\varepsilon(c_{\varepsilon,2}(\varepsilon)-c_{\varepsilon,1}(\varepsilon)). 
\end{equation}
Since the $u_{\varepsilon,i}$'s solve in $(\varepsilon, 2\varepsilon]$ the geometric and translation-invariant equation \[\partial_{t}u=2\mu_{\alpha}(Du)F_{\alpha}(D^{2}u, Du),\] the distance $\delta_{\varepsilon}$ is nondecreasing on $[\varepsilon, 2\varepsilon]$ and hence $\delta_{\varepsilon}(2\varepsilon)\geq \delta_{\varepsilon}(\varepsilon)$.
Using this, \eqref{eq_evolutlemmnormalforce} and repeating the procedure, we obtain 
\begin{equation}\label{eq_concludingcasesprocedure}
\begin{cases}
\delta_{\varepsilon}(k\varepsilon)-\delta_{\varepsilon}((k-1)\varepsilon) \geq 2\varepsilon(c_{\varepsilon, 2}(k\varepsilon)-c_{\varepsilon, 1}(k\varepsilon)) \,\ &\text{if} \,\ k\,\ \text{is odd},\\
\delta_{\varepsilon}(k\varepsilon)-\delta_{\varepsilon}((k-1)\varepsilon) \geq 0 \,\ &\text{otherwise}.
\end{cases}
\end{equation}
Thus, summing over $k \in \{1,\dotsc, n_{*}\}$ and taking into account \eqref{eq_representationofdistance} and \eqref{eq_concludingcasesprocedure}, we have
\[
\delta_{\varepsilon}(t)\geq \delta_{\varepsilon}(0)+2\varepsilon \sum_{l=0}^{\lfloor{\frac{n_{*}-1}{2}\rfloor}} (c_{\varepsilon,2}((2l+1)\varepsilon)-c_{\varepsilon,1}((2l+1)\varepsilon)) \geq \delta_{\varepsilon}(0)+\int_{0}^{2\varepsilon\lfloor{\frac{n_{*}+1}{2}\rfloor}}(g_{2}(\tau)-g_{1}(\tau))\diff \tau.
\]
Letting $\varepsilon\to0+$, yields $\delta(t)\geq \delta(0)+\int_{0}^{t}(g_{2}(\tau)-g_{1}(\tau))\diff \tau$ and completes our proof of Proposition~\ref{prop flowdistanceestim}.
\end{proof}
Using Proposition~\ref{prop flowdistanceestim} and taking into account the proof of \cite[Theorem~8.4]{Bellettini_Caselles_Chambolle_Novaga}, we deduce the next result. 
\begin{cor}\label{cor uniqboundconvexf}
Let $\alpha \in [1,2)$, $g \in C_{b}([0,+\infty))$, $E_{1} \subset E_{2}$ be two compact convex subsets of $\mathbb{R}^{N}$, and $X_{t}(E_{1})$ and $X_{t}(E_{2})$ be the generalized evolutions (see Definition~\ref{def ofgenflow}) corresponding to \eqref{weighted MC equation}. Then $X_{t}(E_{1})\subset X_{t}(E_{2})$ for each $t \geq 0$.
\end{cor}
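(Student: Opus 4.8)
The plan is to derive the inclusion from the distance estimate of Proposition~\ref{prop flowdistanceestim} taken with equal forcing terms $g_{1}=g_{2}=g$, handling first the case in which the two fronts start a positive $\Phi^{\circ}_{\alpha}$-distance apart and then removing this restriction by an inner approximation of $E_{1}$. For the setup I would represent the generalized evolutions through the truncated $\Phi^{\circ}_{\alpha}$-signed-distance data: for any $\eta>0$ the function $d^{\eta}_{\partial E_{i}}$ satisfies $\{d^{\eta}_{\partial E_{i}}>0\}=\operatorname{int}E_{i}$ and $\{d^{\eta}_{\partial E_{i}}=0\}=\partial E_{i}$, it has convex superlevel sets when $E_{i}$ is convex (erosions and dilations of a convex set by a $\Phi^{\circ}_{\alpha}$-ball are convex), and by the level-set independence of the generalized evolution $X_{t}(E_{i})=\{u_{i}(\cdot,t)\ge 0\}$, where $u_{i}$ is the viscosity solution of \eqref{weighted MC equation} with datum $d^{\eta}_{\partial E_{i}}$; moreover $E_{i}(t):=\{u_{i}(\cdot,t)>0\}$ and $X_{t}(E_{i})$ stay convex by Corollary~\ref{cor convexflowlimitcontinuous}, so that $X_{t}(E_{i})=\overline{E_{i}(t)}$ (no fattening).

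\emph{Strictly separated fronts.} Assume $\dist_{\Phi^{\circ}_{\alpha}}(\partial E_{1},\partial E_{2})>0$. Applying Proposition~\ref{prop flowdistanceestim} with $g_{1}=g_{2}=g$ gives, for $\delta(t):=\dist_{\Phi^{\circ}_{\alpha}}(\partial E_{1}(t),\partial E_{2}(t))$, the bound $\delta(t)\ge\delta(0)>0$ on $[0,t_{s}]$; since $\delta$ is continuous this forces $t_{s}=+\infty$, so the two boundaries remain a fixed positive distance apart for all $t\ge 0$. Since $E_{1}(0)\subset E_{2}(0)$, both $E_{1}(t)$ and $E_{2}(t)$ are convex and vary continuously in $t$, and their boundaries never meet, the inclusion $E_{1}(t)\subset E_{2}(t)$ cannot be broken — at a first instant of failure the boundaries would touch, contradicting $\delta(t)>0$ — hence $X_{t}(E_{1})=\overline{E_{1}(t)}\subset\overline{E_{2}(t)}=X_{t}(E_{2})$ for every $t\ge 0$.

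\emph{Removing the separation.} For general compact convex $E_{1}\subset E_{2}$, fix $x_{0}\in\operatorname{int}E_{1}$ (the case $\operatorname{int}E_{1}=\varnothing$ is either trivial or reduces to the previous one after a preliminary dilation) and set $E_{1}^{s}=x_{0}+(1-s)(E_{1}-x_{0})$ for $s\in(0,1)$; then $E_{1}^{s}$ is compact convex with $E_{1}^{s}\subset\operatorname{int}E_{1}\subset\operatorname{int}E_{2}$ and $\dist_{\Phi^{\circ}_{\alpha}}(\partial E_{1}^{s},\partial E_{2})\ge\dist_{\Phi^{\circ}_{\alpha}}(\partial E_{1}^{s},\partial E_{1})>0$, so the previous step yields $X_{t}(E_{1}^{s})\subset X_{t}(E_{2})$. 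As $s\searrow 0$ one has $E_{1}^{s}\to E_{1}$ in the Hausdorff distance, hence $d^{\eta}_{\partial E_{1}^{s}}\to d^{\eta}_{\partial E_{1}}$ uniformly on $\mathbb{R}^{N}$, and by continuous dependence of the viscosity solution of \eqref{weighted MC equation} on its initial datum $u^{s}\to u^{1}$ locally uniformly; since $E_{1}$ is convex its front does not fatten (Corollary~\ref{cor convexflowlimitcontinuous}), so $X_{t}(E_{1})=\overline{\{u^{1}(\cdot,t)>0\}}$ is recovered from $X_{t}(E_{1}^{s})$ in the limit, giving $X_{t}(E_{1})\subset X_{t}(E_{2})$; the identification of these generalized evolutions with the unique geometric flow of convex sets is provided by \cite[Theorem~8.4]{Bellettini_Caselles_Chambolle_Novaga}. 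The main obstacle is precisely this last step: when $\partial E_{1}\cap\partial E_{2}\neq\varnothing$ Proposition~\ref{prop flowdistanceestim} does not apply directly, and making the limit $s\searrow 0$ rigorous requires the stability of the generalized evolution under perturbation of the initial set — which is where convexity, through Corollary~\ref{cor convexflowlimitcontinuous} and the resulting absence of fattening, together with \cite[Theorem~8.4]{Bellettini_Caselles_Chambolle_Novaga}, enter in an essential way.
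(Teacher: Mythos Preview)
Your argument is correct but follows a genuinely different route from the paper. You apply Proposition~\ref{prop flowdistanceestim} with \emph{equal} forcings $g_{1}=g_{2}=g$, so that $\delta(t)\ge\delta(0)$ on the whole half-line and no iteration is needed; the price is that you must first separate the fronts by shrinking $E_{1}$ and then pass to the limit via stability and no-fattening for convex initial data. The paper instead keeps $E_{1}$ fixed and separates by \emph{dilating} $E_{2}$: for $s>1$ the parabolic rescaling $u_{s}(x,t)=s\,u(x/s,t/s^{2})$ sends the solution starting from $E_{2}$ to an exact solution starting from $sE_{2}$ with the modified forcing $g_{2}(t)=s^{-1}g(t/s^{2})$, and Proposition~\ref{prop flowdistanceestim} is invoked with $g_{1}\ne g_{2}$. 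This yields the quantitative bound $\delta_{s}(t)\ge c(s-1)-3t(s-1)\|g\|_{\infty}$, giving inclusion on a time interval $[0,c/(3\|g\|_{\infty})]$ independent of $s$, after which one iterates using that convexity is preserved.

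Your approach is more elementary in that it uses only the $g_{1}=g_{2}$ case of the distance estimate and avoids the short-time iteration; the paper's approach, on the other hand, makes the limiting step essentially explicit (the dilated set is an exact solution, so continuity in $s$ is immediate) and exhibits the full strength of Proposition~\ref{prop flowdistanceestim} with distinct forcings, which is the real novelty of the section. One small point to tighten in your write-up: Corollary~\ref{cor convexflowlimitcontinuous} gives convexity of the superlevel sets but not directly the absence of fattening; you are right that this is where \cite[Theorem~8.4]{Bellettini_Caselles_Chambolle_Novaga} (or an equivalent argument) is needed to justify $X_{t}(E_{1})=\overline{E_{1}(t)}$ and the passage to the limit $s\searrow 0$.
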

\begin{proof}[Proof of Corollary~\ref{cor uniqboundconvexf}]
If $E_{1}$ has an empty interior, then $X_{t}(E_{1})=\emptyset$ for all $t>0$ and the proof follows. Thus, we can assume that $E_{1}$ has a nonempty interior and that the origin belongs to the interior of $E_{1}$. Let us fix $s>1$. For each $t \in [0,+\infty)$, we define $g_{1}(t)=g(t)$ and $g_{2}(t)=g(t/s^{2})/s$.  If $u$ is a unique viscosity solution of the equation \eqref{weighted MC equation} supplemented with the initial condition $u(\cdot, 0)=d^{\eta}_{\partial E_{2}}(\cdot)$ in $\mathbb{R}^{N}$, 
then the function $u_{s}(x,t)=su(x/s, t/s^{2})$ is a unique viscosity solution of the equation $\partial_{t} u=\mu_{\alpha}(Du)(F_{\alpha}(D^{2}u, Du)+g_{2}|Du|)$ supplemented with the initial condition $u(\cdot,0)=sd^{\eta}_{\partial E_{2}}(\cdot)$ in $\mathbb{R}^{N}$. Notice that the generalized evolution corresponding to the solution of the latter equation is defined by $sX_{t/s^{2}}(E_{2})$. Setting $\delta_{s}(t)=\dist_{\Phi^{\circ}_{\alpha}}(\partial X_{t}(E_{1}), s\partial X_{t/s^{2}}(E_{2}))$, we observe that $\delta_{s}(0)>0$. According to Proposition~\ref{prop flowdistanceestim},
\[
\delta_{s}(t)\geq \delta_{s}(0)+\int_{0}^{t}\left(\frac{1}{s}g\left(\frac{\tau}{s^{2}}\right)-g(\tau)\right)\diff \tau
\]
for each $t \in [0, \inf\{\tau>0: \delta_{s}(\tau)=0\})$, where
\begin{align*}
\int_{0}^{t}\left(\frac{1}{s}g\left(\frac{\tau}{s^{2}}\right)-g(\tau)\right)\diff \tau &= (s-1)\int_{0}^{t/s^{2}}g(\tau)\diff \tau - \int_{t/s^{2}}^{t}g(\tau)\diff \tau \\
& \leq \frac{(s-1)t\|g\|_{\infty}}{s^{2}}+\frac{t(s-1)(s+1)\|g\|_{\infty}}{s^{2}} \\
&\leq t(s-1)\|g\|_{\infty}+2t(s-1)\|g\|_{\infty}\\
&=3t(s-1)\|g\|_{\infty}
\end{align*}
and $\delta_{s}(0)\geq c(s-1)$, where $c>0$ depends only on $E_{1}$, $E_{2}$ and $\Phi_{\alpha}$. Thus, $\delta_{s}(t)\geq 0$ while $t\leq \frac{c}{3\|g\|_{\infty}}$, which does not depend on $s$. This implies that $X_{t}(E_{1})\subset X_{t}(E_{2})$, which completes our proof of Corollary~\ref{cor uniqboundconvexf}.
\end{proof}
\begin{rem} The same proof shows that a strictly star-shaped domain with respect to a center point $x_{0}$ will have a unique evolution for a positive time as long as no line emanating from $x_{0}$ becomes tangent to its boundary.
\end{rem}
\begin{rem}\label{remark about IP}
It is worth noting that our scheme is monotone. Indeed, if $\Omega_{1}\subset \Omega_{2}$, then $(\Omega_{1})^{h}_{nh}\subset (\Omega_{2})^{h}_{nh}$ for each $h>0$ and for each $n \in \mathbb{N}$, which comes from the definition (see \eqref{eq_nsetdiscreteh}). Let $(X_{t}(\overline{\Omega}_{1}))_{t\geq 0}$  and $(X_{t}(\overline{\Omega}_{2}))_{t\geq 0}$ be the generalized evolutions of $\Omega_{1}$ and $\Omega_{2}$ with uniformly continuous initial conditions $u_{1,0}(\cdot)$ and $u_{2,0}(\cdot)$ such that $u_{1,0}(\cdot) \leq u_{2,0}(\cdot)$ in $\mathbb{R}^{N}$ (recall that $\Omega_{1}=\{u_{1,0}(\cdot)>0\}$ and $\Omega_{2}=\{u_{2,0}(\cdot)>0\}$), respectively (see Definition~\ref{def ofgenflow}). Then the monotonicity of the scheme (in combination with the application of the comparison principle) yields the inclusion $X_{t}(\overline{\Omega}_{1}) \subset X_{t}(\overline{\Omega}_{2})$ for each $t \geq 0$. In order to prove this inclusion principle, assume by contradiction that for some $t>0$ there exists $x \in X_{t}(\overline{\Omega}_{1})$ such that $x \not \in X_{t}(\overline{\Omega}_{2})$. Then there exists $\delta>0$ such that $u_{2}(x,t)<-\delta$, where $u_{i}$ is the unique viscosity solution of \eqref{weighted MC equation} satisfying the initial condition $u_{i}(\cdot, 0)=u_{i,0}(\cdot)$ in $\mathbb{R}^{N}$ for each $i \in \{1,2\}$. Since $u_{1,0}(\cdot)\leq u_{2,0}(\cdot)$, defining $\Omega^{\delta/2}_{i}=\{u_{i,0}(\cdot)+\delta/2>0\}$ for each $i \in \{1,2\}$, we have $\Omega^{\delta/2}_{1}\subset \Omega^{\delta/2}_{2}$. Inasmuch as $u_{1}(x,t)\geq 0>-\delta/2$, $x \in (\Omega^{\delta/2}_{1})_{t}$. Then, according to Theorem~\ref{mainthm}, $u_{1,h}\to 1$  locally uniformly around $x$ as $h\to 0+$. This, together with the monotonicity of the scheme and the fact that $\Omega^{\delta/2}_{1}\subset \Omega^{\delta/2}_{2}$, implies that there exists $\varepsilon>0$ such that $\overline{B}_{\varepsilon}(x) \subset (\Omega^{\delta/2}_{1})^{h}_{nh} \subset (\Omega^{\delta/2}_{2})^{h}_{nh}$ for each $h>0$ small enough, where $nh \to t$ as $h \to 0+$. Next, taking into account that $\sign^{*}(u_{2}(x,t)+\delta/2)$ is the maximal upper semicontinuous subsolution of \eqref{weighted MC equation} supplemented with the initial datum $\mathbbm{1}_{\overline{E}}-\mathbbm{1}_{\overline{E}^{c}}$ (see \cite{Barles-Soner-Souganidis}), where $E=\Omega^{\delta/2}_{2}$, using Proposition~\ref{prop visc subandsupersol}, we deduce that $1=\limsup^{*}u_{2,h}(x,t)\leq \sign^{*}(u_{2}(x,t)+\delta/2)$. Thus, $u_{2}(x,t)+\delta/2\geq 0$ and $u_{2}(x,t)\geq -\delta/2>-\delta$, which leads to a contradiction with the fact that $u_{2}(x, t)<-\delta$. This completes our proof of the inclusion principle. 

\end{rem}
\section{Large times asymptotics}
In this section, we describe the asymptotic behavior of the generalized evolutions corresponding to \eqref{weighted MC equation}, in the limit $t \to +\infty$, in the case where $g\equiv c$ is a positive constant function. Namely, if the initial set $\Omega_{0}$ is bounded and contains a sufficiently  large ball $B_{R}$, then the generalized front propagation is asymptotically similar to the Wulff shape $\mathcal{W}$ of the energy function $c\,\Phi_{\alpha}$, where 
\[
\mathcal{W}=\{x \in \mathbb{R}^{N}: \langle x, p\rangle \leq c\,\Phi_{\alpha}(p)\,\ \text{for all}\,\ p \in \mathbb{S}^{N-1}\}
\]
$c>0$ is a constant and $\Phi_{\alpha}$ is the mobility defined in \eqref{eq_explmobilitycomputah1}. We recall that $\Phi_{\alpha}$ is a norm on $\mathbb{R}^{N}$ (see Lemma~\ref{lem convexitymobility}). It is worth noting that $\mathcal{W}=\{(c\,\Phi_{\alpha})^{\circ}\leq 1\}$ is the unit ball of the dual norm $(c\,\Phi_{\alpha})^{\circ}$ of $c\,\Phi_{\alpha}$. Furthermore, $\mathcal{W}$ is a compact convex subset of $\mathbb{R}^{N}$ with the origin as its interior point (see \cite[Section~5]{Ishii-Pires-Souganidis}) and the following result holds. 
\begin{theorem}\label{theorem ltasymptotic behavior}
Let $\alpha \in [1,2)$ and $c>0$. Then there exists $R=R(\alpha, c, N, \norm)>0$ such that if $\varepsilon>0$ and $\Omega_{0} \subset \mathbb{R}^{N}$ is open, bounded and contains $B_{R}$, then for some $T>0$ and for each $t \geq T$, 
\[
\{x \in \mathcal{W}: \dist(x, \partial \mathcal{W})>\varepsilon\} \subset t^{-1}O_{t}(\Omega_{0})\,\ \text{and}\,\ t^{-1}X_{t}(\smash{\overline{\Omega}}_{0}) \subset \{x \in \mathbb{R}^{N}: \dist(x, \mathcal{W})<\varepsilon\},
\]
where $(X_{t}(\smash{\overline{\Omega}}_{0}), O_{t}(\Omega_{0}))_{t \geq0}$ is the generalized evolution corresponding to \eqref{weighted MC equation} with $g\equiv c$ (see Definition~\ref{def ofgenflow}).  In particular, $t^{-1}(X_{t}(\smash{\overline{\Omega}}_{0})\setminus O_{t}(\Omega_{0})) \to \partial \mathcal{W}$ in the Hausdorff distance as $t \to +\infty$.
\end{theorem}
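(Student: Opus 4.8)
The plan is a barrier argument with rescaled Wulff shapes, together with the monotonicity of the generalized evolutions under inclusion, which follows from the comparison principle for \eqref{weighted MC equation}. Write $W=\{x\in\mathbb{R}^{N}:\Phi^{\circ}_{\alpha}(x)\le 1\}$, so that $\mathcal{W}=cW$, and put $W_{\rho}=\rho W=\{\Phi^{\circ}_{\alpha}\le\rho\}$ for $\rho>0$. Fix a smooth, uniformly convex norm $\phi$ approximating $\Phi_{\alpha}$, and let $\Sigma_{t}=\partial W^{\phi}_{R(t)}$ be the smooth hypersurface obtained by dilating the $\phi$-Wulff shape by a factor $R(t)$. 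Two observations turn these into barriers. First, on $\partial W^{\phi}_{\rho}$ one has the duality identity $\langle x,\nu(x)\rangle=\rho\,\mu_{\alpha}(\nu(x))$: indeed $\mu_{\alpha}(p)=\Phi_{\alpha}(p)/|p|$, and for $y\in\partial W^{\phi}$ the Euler relation $\langle y,\nabla\phi^{\circ}(y)\rangle=1$ together with the duality $\phi(\nabla\phi^{\circ}(y))=1$ give $\phi(\nu(y))=1/|\nabla\phi^{\circ}(y)|=\langle y,\nu(y)\rangle$; hence the normal velocity of a dilating $\Sigma_{t}$, namely $\dot R(t)\,\langle x,\nu\rangle/R(t)$, equals exactly $\dot R(t)\,\mu_{\alpha}(\nu)$. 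Second, by linearity of $F_{\alpha}$ in the Hessian and $0$-homogeneity in the gradient, and using the identification (via Theorem~\ref{thm variationaltypeorigin} and \eqref{eq_explmobilitycomputah1}) of $\mu_{\alpha}(p)|p|^{-1}F_{\alpha}(D^{2}u,Du)$ with the $\Phi_{\alpha}$-anisotropic mean curvature, which is constant on the Wulff shape, the curvature part of the prescribed velocity of \eqref{weighted MC equation} at $\Sigma_{t}$ lies in $[-K/R(t),0]\,\mu_{\alpha}(\nu)$ for a constant $K=K(\alpha,N,\norm)>0$ (nonpositive because the Wulff shape is convex). Therefore, if $R$ solves $\dot R=c-K/R$ the velocity of $\Sigma_{t}$ is at most the prescribed one (inner, i.e.\ sub-, barrier), and if $R$ solves $\dot R=c$ it is at least the prescribed one (outer, i.e.\ super-, barrier). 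By the avoidance/comparison principle for \eqref{weighted MC equation} and stability of viscosity solutions as $\phi\to\Phi_{\alpha}$, one obtains $O_{t}(W_{\rho})\supset W_{R(t)}$ whenever $R(0)=\rho$, $\dot R=c-K/R$, and $X_{t}(W_{\rho})\subset W_{R(t)}$ whenever $R(0)=\rho$, $\dot R=c$.

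Fix any $R=R(\alpha,c,N,\norm)>\tfrac{K}{c}\max\{|x|:\Phi^{\circ}_{\alpha}(x)\le 1\}$, so that $W_{\rho_{1}}\subset B_{R}$ for some $\rho_{1}>K/c$ by \eqref{equivofnormsoneucl}. Assume $B_{R}\subset\Omega_{0}$. By monotonicity and the inner barrier, $O_{t}(\Omega_{0})\supset O_{t}(W_{\rho_{1}})\supset W_{R(t)}$, where $R$ solves $\dot R=c-K/R$ with $R(0)=\rho_{1}>K/c$. Since $\rho_{1}$ is supercritical, $R(t)\to+\infty$ and $R(t)/t\to c$ as $t\to+\infty$. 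Hence $t^{-1}O_{t}(\Omega_{0})\supset\tfrac{R(t)}{t}W$, and since $W$ is a compact convex body with the origin in its interior, $\tfrac{R(t)}{t}W\supset\{x\in cW:\dist(x,\partial(cW))>\varepsilon\}=\{x\in\mathcal{W}:\dist(x,\partial\mathcal{W})>\varepsilon\}$ once $t\ge T$, for a suitable $T=T(\varepsilon,\Omega_{0})$. This is the first inclusion.

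For the second, use that $\Omega_{0}$ is bounded: pick $\rho_{2}$ with $\overline{\Omega}_{0}\subset W_{\rho_{2}}$. The outer barrier (with $R(0)=\rho_{2}$, $\dot R=c$; equivalently, by Lemma~\ref{lem estimondistdual1}, the pure-forcing flow of Section~\ref{Section 6} of $W_{\rho_{2}}$ is the $\dist_{\Phi^{\circ}_{\alpha}}$-neighbourhood $W_{\rho_{2}}\oplus ctW=W_{\rho_{2}+ct}$) gives $X_{t}(\overline{\Omega}_{0})\subset W_{\rho_{2}+ct}$. Hence $t^{-1}X_{t}(\overline{\Omega}_{0})\subset\tfrac{\rho_{2}+ct}{t}W$, which is contained in $\{x:\dist(x,\mathcal{W})<\varepsilon\}$ for $t$ large because $\tfrac{\rho_{2}+ct}{t}W\to cW=\mathcal{W}$. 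Combining the two inclusions, $t^{-1}(X_{t}(\overline{\Omega}_{0})\setminus O_{t}(\Omega_{0}))$ is trapped between $\{x\in\mathcal{W}:\dist(x,\partial\mathcal{W})>\varepsilon\}$ and $\{x:\dist(x,\mathcal{W})<\varepsilon\}$, hence lies within Hausdorff distance $2\varepsilon$ of $\partial\mathcal{W}$; letting $\varepsilon\searrow 0$ yields the Hausdorff convergence.

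The main obstacle is the barrier step of the first paragraph: making the formal self-similar identity rigorous at the level of the generalized (level-set) evolution when $\norm$, hence $\Phi_{\alpha}$ and $W$, is neither smooth nor strictly convex. This requires a quantitative approximation of $\Phi^{\circ}_{\alpha}$ by smooth, strictly convex norms $\phi^{\circ}_{\delta}$ with control of the curvature term $F_{\alpha}$ on $\partial W^{\phi_{\delta}}_{\rho}$ uniform in $\delta$ and $\rho$; the identification of $\mu_{\alpha}(p)|p|^{-1}F_{\alpha}$ with the constant anisotropic mean curvature of the Wulff shape, which is exactly where the variational structure of Theorem~\ref{thm variationaltypeorigin} and the explicit mobility \eqref{eq_explmobilitycomputah1} enter, together with the duality identity $\langle x,\nu(x)\rangle=\rho\,\mu_{\alpha}(\nu(x))$ on $\partial W_{\rho}$; and finally the passage $\delta\to 0$ via stability and the avoidance principle. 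The rest of the argument is soft.
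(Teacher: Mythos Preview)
Your approach is genuinely different from the paper's and carries the right geometric intuition, but the gap you flag at the end is real and is not easily closed along the lines you sketch.

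The paper does not construct self-similar Wulff-shape barriers. Instead it works in rescaled variables $(y,t)\mapsto(ty,t)$, takes the lower and upper half-relaxed limits of the indicators of $O_t(\Omega_0)$ and $X_t(\overline{\Omega}_0)$ as $t\to+\infty$, and shows (quoting \cite{Ishii-Pires-Souganidis}) that these limits are, respectively, a viscosity super- and subsolution of the first-order equation
\[
-\langle x,Dv\rangle-c\,\Phi_\alpha(Dv)=0\qquad\text{in }\mathbb{R}^N.
\]
The curvature term $F_\alpha$ simply disappears in this scaling. The structure theory for this eikonal equation (again \cite{Ishii-Pires-Souganidis}) then forces $\underaccent{\bar}u=1$ in $\text{int}(\mathcal{W})$ and $\bar{w}=0$ in $\mathcal{W}^c$, which yields both inclusions. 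The only quantitative input is an a priori linear-growth bound: if $B_R\subset\Omega_0$ for $R$ large, then $B_{\delta t}\subset O_t(\Omega_0)$ for some $\delta>0$ and all $t\ge0$.

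Your barrier route, by contrast, tries to produce the sharp rate $c$ directly via the ODE $\dot R=c-K/R$, and here there are two problems.

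First, the identification you invoke is not what Theorem~\ref{thm variationaltypeorigin} asserts. That theorem expresses $|Du|^{-1}F_1(D^2u,Du)$ (up to a factor) as $\mathrm{div}\big(\nabla G(Du/|Du|)\big)$ for the anisotropy $G=-\tfrac{1}{2\pi}\mathcal{F}(L_{\norm})$, which in general is \emph{not} $\Phi_\alpha$; so $F_\alpha$ has no reason to be constant on the $\Phi_\alpha$-Wulff shape. Likewise the duality identity you derive on $\partial W^{\phi}_{\rho}$ reads $\langle x,\nu\rangle=\rho\,\phi(\nu)$, not $\rho\,\mu_\alpha(\nu)=\rho\,\Phi_\alpha(\nu)$; the two coincide only when $\phi=\Phi_\alpha$, i.e.\ precisely in the nonsmooth case you are trying to avoid.

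Second, and more seriously, the constant $K$ in your bound on $\kappa_F$ along $\partial W^{\phi_\delta}_{R}$ depends on the $C^2$ geometry of $\partial W^{\phi_\delta}$ and will in general blow up as $\phi_\delta\to\Phi_\alpha$, unless $\Phi_\alpha$ happens to be smooth and uniformly convex. Then the supercritical threshold $\rho_1>K_\delta/c$ blows up as well, so $R$ cannot be chosen independently of $\delta$. Fixing a single $\phi$ does not help either: if $W^{\phi}\subsetneq W$, then $t^{-1}O_t(\Omega_0)\supset(R(t)/t)W^{\phi}$ never covers $\{x\in\mathcal{W}:\dist(x,\partial\mathcal{W})>\varepsilon\}$ once $\varepsilon$ is small. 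The paper's half-relaxed-limit route sidesteps this circularity entirely: it needs only \emph{some} linear lower bound $B_{\delta t}\subset O_t(\Omega_0)$, and the limiting eikonal equation then upgrades $\delta$ to the sharp constant $c$ for free. Your outer-barrier step (via Lemma~\ref{lem estimondistdual1} and comparison), on the other hand, is essentially sound.
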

\begin{proof}
Define $u=\mathbbm{1}_{E}$, where $E=\bigcup_{t\geq 0} O_{t}(\Omega_{0})\times \{t\}$. It is well known (see, for instance, \cite{Barles-Soner-Souganidis}) that $u$ is a viscosity supersolution of the equation 
\begin{equation}\label{eq_m0jtj45jt405j0tj450}
\partial_{t}u=\mu_{\alpha}(Du)F_{\alpha}(D^{2} u, Du) + c\,\Phi_{\alpha}(Du) \,\ \text{in}\,\ \mathbb{R}^{N}\times (0,+\infty).
\end{equation}
According to \cite[Lemma~6.3]{Ishii-Pires-Souganidis}, there exist $R=R(\alpha, c, N, \norm)>0$ and $\delta=\delta(\alpha, c, N, \norm)>0$ such that if $u=1$ on $B_{R}\times \{0\}$ (or, equivalently, $B_{R}\subset \Omega_{0}$), then $u(tx,t)=1$ for each pair $(x,t) \in B_{\delta}\times [0,+\infty)$. This defines our $R>0$. Let $\underaccent{\bar}u:\mathbb{R}^{N}\to \{0,1\}$ be a lower semicontinuous function defined by 
\[
\underaccent{\bar}u(x):=\lim_{\varepsilon \to 0+}\inf\{u(sy, s):s>\varepsilon^{-1},\,\ y \in B_{\varepsilon}(x)\}.
\]
Then $\underaccent{\bar}u$ is a viscosity supersolution of the equation
\begin{equation}\label{eq_n4j05j04j90540900j045jj0j}
-\langle x, Dv \rangle-c\,\Phi_{\alpha}(Dv)=0 \,\ \text{in}\,\ \mathbb{R}^{N}
\end{equation}
(see \cite[Lemma~6.1]{Ishii-Pires-Souganidis}). Indeed, defining the function $f(x, t)=u(tx, t)$, we can show that
\begin{equation}\label{viscossupersolassocequtomcf}
t\partial_{t}f\geq \langle Df, x\rangle + t^{-1}\mu_{\alpha}(Df)F_{\alpha}(D^{2}f, Df)+ c\Phi_{\alpha}(Df) \,\ \text{in}\,\ \mathbb{R}^{N}\times (0,+\infty)
\end{equation}
holds in the viscosity sense. Assume that $\varphi \in C^{2}(\mathbb{R}^{N})$, $\underaccent{\bar}u - \varphi$ has a strict minimum at $\hat{x}$ and \[\lim_{|x|\to +\infty} \varphi(x)=-\infty, \,\ \underaccent{\bar}u(\hat{x})=\varphi(\hat{x}).\] Next, we can define a sequence $(\alpha_{n})_{n \in \mathbb{N}} \subset (0,+\infty)$ such that the function
\[
f_{*}(x,t)-\varphi(x)-(\varepsilon_{n}-\frac{1}{n})e^{-\frac{1}{n}(t-n)}+\alpha_{n}t,
\]
where \[\varepsilon_{n}=\inf_{x\in \mathbb{R}^{N}, t\geq n}(f_{*}(x,t)-\varphi(x))\to 0\] as $n\to+\infty$, achieves its minimum over $\mathbb{R}^{N}\times [n,+\infty)$ at some point $(x_{n}, t_{n})\in \mathbb{R}^{N}\times (n,+\infty)$, where, up to a subsequence (not relabeled), $x_{n} \to \hat{y}$ as $n\to +\infty$. We can choose $\alpha_{n}$ so that 
\[
\varepsilon_{n}-(\varepsilon_{n}-\frac{1}{n})e^{-\frac{1}{n}(t_{n}-n)}+\alpha_{n}t_{n}\leq f_{*}(x_{n},t_{n})-\varphi(x_{n})-(\varepsilon_{n}-\frac{1}{n})e^{-\frac{1}{n}(t_{n}-n)}+\alpha_{n}t_{n}\leq \frac{1}{2n}
\]
and hence $\alpha_{n}t_{n}\to 0$ and $\underaccent{\bar}u(\hat{y})-\varphi(\hat{y})=0$, which implies that $\hat{y}=\hat{x}$, since $\hat{x}$ is a strict minimum of $\underaccent{\bar}u-\varphi$ and $\underaccent{\bar}u(\hat{x})=\varphi(\hat{x})$. Next, since $f_{*}$ is a viscosity solution of \eqref{viscossupersolassocequtomcf}, $\frac{t_{n}}{n}e^{-\frac{1}{n}(t_{n}-n)} \leq 1$ (where $t_{n}>n$) and for some $r>0$, $(x_{n})_{n\in \mathbb{N}} \subset B_{r}(\hat{x})$, there exists a constant $C>0$ independent of $n$ such that
\[
\langle x, D\varphi(x_{n})\rangle +c \Phi_{\alpha}(D\varphi(x_{n})) -\frac{C}{t_{n}} \leq \varepsilon_{n}-\frac{1}{n}.
\]
Letting $n\to +\infty$, we deduce that $-\langle x, D\varphi(\hat{x})\rangle - c\Phi_{\alpha}(D\varphi(\hat{x})) \geq 0$. This proves that $\underaccent{\bar}u$ is a viscosity supersolution of \eqref{eq_n4j05j04j90540900j045jj0j}. Since $\underaccent{\bar}u=1$ in $B_{\delta}$ and $\underaccent{\bar}u$ is a viscosity supersolution of \eqref{eq_n4j05j04j90540900j045jj0j}, \cite[Theorem~5.3]{Ishii-Pires-Souganidis} implies that $\underaccent{\bar}u=1$ in $\text{int}(\mathcal{W})$. This yields that for each $\varepsilon>0$ there exists $T>0$ such that for each $t \geq T$ and for each $x\in \mathcal{W}$ satisfying $\dist(x, \partial \mathcal{W})>\varepsilon$, it holds $u(tx, t)=1$ (since $u$ takes values in $\{0,1\}$). Observing that for each pair $(x,t) \in \mathbb{R}^{N}\times (0,+\infty)$, $u(tx,t)=\mathbbm{1}_{t^{-1}O_{t}(\Omega_{0})}(x)$, we obtain
\[
\{x \in \mathcal{W}: \dist(x, \partial \mathcal{W})>\varepsilon\} \subset t^{-1} O_{t}(\Omega_{0})
\]
for each $t \geq T$. Next, define $w=\mathbbm{1}_{\Sigma}$, where $\Sigma=\bigcup_{t\geq 0} X_{t}(\smash{\overline{\Omega}}_{0})\times \{t\}$. Then $w$ is an upper semicontinuous viscosity subsolution of the equation \eqref{eq_m0jtj45jt405j0tj450} (see \cite{Barles-Soner-Souganidis}). Let $\bar{w}:\mathbb{R}^{N}\to \{0,1\}$ be defined by 
\[
\bar{w}(x)=\lim_{\varepsilon\to 0+}\sup\{w(sy, s): s>\varepsilon^{-1},\,\ y \in B_{\varepsilon}(x)\}.
\]
By \cite[Lemma~6.1]{Ishii-Pires-Souganidis}, $\bar{w}$ is an upper semicontinuous viscosity subsolution of the equation \eqref{eq_n4j05j04j90540900j045jj0j}. According to \cite[Lemma~6.2]{Ishii-Pires-Souganidis}, for some $L=L(\alpha, c, N, \mathcal{N})>0$, $\bar{w}=0$ in $B_{L}^{c}$. Then, applying \cite[Theorem~5.3]{Ishii-Pires-Souganidis}, we deduce that $\bar{w}=0$ in $\mathcal{W}^{c}$, which implies that for each $\varepsilon>0$ there exists $T>0$ such that for each $t \geq T$, 
\[
t^{-1}X_{t}(\smash{\overline{\Omega}}_{0}) \subset \{x \in \mathbb{R}^{N}: \dist(x, \mathcal{W})<\varepsilon\}.
\]
This completes the proof of Theorem~\ref{theorem ltasymptotic behavior}. 
\end{proof}
\bibliography{bib.1}
\bibliographystyle{amsplain}
\end{document}